\title[Exotic Mazur manifolds and knot trace invariants]{Exotic Mazur manifolds and knot trace invariants}
\author[Hayden]{Kyle Hayden} \address{Columbia University, New York, NY 10027} 
\email{hayden@math.columbia.edu}
\author[Mark]{Thomas E. Mark} 
\address{University of Virginia, Charlottesville, VA 22904}
\email{tmark@virginia.edu}
\author[Piccirillo]{Lisa Piccirillo} 
\address{Brandeis University, Waltham, MA 02453}
\email{lpiccirillo@math.utexas.edu}
\theoremstyle{plain}
\newtheorem{thm}{Theorem}[section]    
\newtheorem{lem}[thm]{Lemma}          
\newtheorem{prop}[thm]{Proposition}
\newtheorem{cor}[thm]{Corollary}
\newtheorem{conj}[thm]{Conjecture}
\theoremstyle{definition}
\newtheorem{defn}[thm]{Definition}    
\newtheorem{rem}[thm]{Remark}             
\newcommand{\lk}{\mathrm{lk}}
\newcommand{\tb}{\mathrm{tb}}
\newcommand{\rot}{\mathrm{rot}}
\renewcommand{\S}{\textsection}
\newcommand{\nd}{N}
\newcommand{\gsh}{g_{sh}}
\newcommand{\isoeq}{=}
\newcommand{\s}{\mathfrak{s}}
\newcommand{\spinc}{{\mbox{spin$^c$} }}
\newcommand{\OnS}{{Ozsv\'ath and Szab\'o}}
\newcommand{\zee}{\mathbb{Z}}
\newcommand{\cee}{\mathbb{C}}
\newcommand{\eff}{\mathbb{F}}
\newcommand{\X}{\mathbb{X}}
\newcommand{\K}{\mathcal{K}}
\newcommand{\hfhat}{\widehat{HF}}
\newcommand{\cfhat}{\widehat{CF}}
\newcommand{\ts}{\textstyle}
\newcommand{\cpbar}{\overline{\cee {P}^2}}
\titleformat{\subsubsection}[runin]{\normalfont\bfseries}{\thesubsubsection}{0.75em}{}[.]
\begin{document}

\begin{abstract}
From a handlebody-theoretic perspective, the simplest compact, contractible 4-manifolds, other than the 4-ball, are Mazur manifolds. We produce the first pairs of Mazur manifolds that are homeomorphic but not diffeomorphic.  Our diffeomorphism obstruction comes from our proof that the knot Floer homology concordance invariant $\nu$ is an invariant of the smooth 4-manifold associated to a knot $K \subset S^3$ by attaching an $n$-framed 2-handle to $B^4$ along $K$. We also show (modulo forthcoming work of Ozsv\'ath and Szab\'o) that the concordance invariants $\tau$ and $\epsilon$ are \emph{not} invariants of such 4-manifolds. As a corollary to the existence of exotic Mazur manifolds, we produce integer homology 3-spheres admitting two distinct $S^1\times S^2$ surgeries, resolving a question from Problem 1.16 in Kirby's list \cite{kirby}.
\end{abstract}

\maketitle

\vspace{-0.65cm}

\section{Introduction}

A primary goal of 4-manifold topology is the detection of exotic smooth structures on the simplest 4-manifolds. A pair of smooth manifolds are said to be exotic if they are homeomorphic but not diffeomorphic. Freedman's exotic structures on $\mathbb{R}^4$ provided the first examples of exotic contractible 4-manifolds \cite{freedman}. Exotic structures on contractible 4-manifolds are elusive in part because most tractable smooth 4-manifold invariants are trivial on manifolds with vanishing second homology. Our goal in this paper is to use tools from knot theory and 3-manifold topology to detect exotic smooth structures on simple 4-manifolds.

From the perspective of handlebody theory, the simplest contractible 4-manifolds, other than $B^4$, are Mazur manifolds. Introduced in \cite{mazur}, a \emph{Mazur manifold} is any contractible 4-manifold obtained by attaching a single 1- and 2-handle to $B^4$. In \cite{akbulut:cork}, Akbulut showed that one of Mazur's original examples $W$ admits a \emph{relatively} exotic smooth structure; that is, there is a self-diffeomorphism of $\partial W$ that extends to a homeomorphism of $W$ but not a diffeomorphism of $W$.  In \cite{akbulut-ruberman}, Akbulut and Ruberman produced the first examples of exotic compact, contractible 4-manifolds. However, their construction naturally yields examples with complicated handlebody structures; it remained open whether Mazur manifolds admit exotic smooth structures. 

\begin{thm}\label{thm:mazur}
There exist infinitely many pairs of exotic Mazur manifolds.
\end{thm}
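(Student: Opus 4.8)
The plan is to produce explicit families of Mazur manifolds $\{M_i\}$ and $\{M_i'\}$, each presented by a handle diagram with a single $1$-handle and a single $2$-handle, engineered so that for each $i$ the two manifolds have diffeomorphic boundaries while being built from a pair of knots $K_i, K_i'$ whose traces are distinguished by $\nu$. The homeomorphism and the non-diffeomorphism are then established by two entirely separate mechanisms. For the homeomorphism, I would first check by a handle cancellation — equivalently, by verifying that the attaching circle of the $2$-handle links the belt circle of the $1$-handle algebraically once — that each $M_i$ and $M_i'$ is genuinely contractible, so each boundary $\partial M_i = \partial M_i'$ is an integer homology sphere. Since any two compact contractible $4$-manifolds with homeomorphic boundary are themselves homeomorphic (a consequence of Freedman's theorem, using that a homeomorphism of their boundaries extends over the contractible fillings), it suffices to exhibit a diffeomorphism $\partial M_i \cong \partial M_i'$, which I would read off directly from the two handle pictures via Kirby moves on the boundary.

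The non-diffeomorphism is where the main theorem enters. I would arrange the construction so that $M_i$ and $M_i'$ are related to the knot traces $X_n(K_i)$ and $X_n(K_i')$ in a controlled way: concretely, so that attaching a single fixed framed $2$-handle along a knot in the common boundary converts $M_i$ into $X_n(K_i)$ and $M_i'$ into $X_n(K_i')$. A diffeomorphism $M_i \cong M_i'$ compatible with the boundary identification would then extend across this fixed handle to a diffeomorphism $X_n(K_i) \cong X_n(K_i')$. By the theorem that $\nu$ is an invariant of the knot trace, such a diffeomorphism forces $\nu(K_i) = \nu(K_i')$; choosing the knots so that $\nu(K_i) \neq \nu(K_i')$ yields a contradiction, so no diffeomorphism $M_i \cong M_i'$ can exist.

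To obtain infinitely many genuinely distinct pairs, I would draw $K_i, K_i'$ from a twisting or satellite family for which $\nu$ is computable and realizes the required inequality for every $i$, and then check that the homology spheres $\partial M_i$ are pairwise non-diffeomorphic (for instance via the Casson invariant or correction terms), so that distinct values of $i$ give distinct pairs.

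The main obstacle is the simultaneous engineering built into the construction: I must arrange for $K_i$ and $K_i'$ to have \emph{diffeomorphic} trace boundaries, which is what drives the homeomorphism of the Mazur manifolds, while \emph{differing} in $\nu$, which is what obstructs their diffeomorphism; and I must set up the handle relationship between the Mazur manifolds and the traces precisely enough that a hypothetical diffeomorphism of the Mazur manifolds really does descend to a diffeomorphism of the traces. Reconciling ``same boundary'' with ``different $\nu$'', producing the boundary diffeomorphism explicitly, and computing $\nu$ on the chosen family are the decisive technical checks.
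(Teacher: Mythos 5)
Your outline reproduces the paper's overall architecture (homeomorphic Mazur manifolds built from a pair of knots whose traces are distinguished by $\nu$), and your route to the homeomorphism---extending a boundary diffeomorphism over the contractible fillings via Freedman---is a legitimate alternative to the paper's cork-twist argument. But there is a genuine gap at the decisive step, precisely the one you set aside as a ``technical check'': you pass from a diffeomorphism $M_i \cong M_i'$ to a diffeomorphism of traces only for a diffeomorphism ``compatible with the boundary identification,'' i.e., one carrying your chosen framed attaching curve $\gamma \subset \partial M_i$ to the corresponding curve in $\partial M_i'$. A hypothetical diffeomorphism is arbitrary: it restricts to \emph{some} diffeomorphism $\phi$ of the boundaries, and $\phi(\gamma)$ is a priori just some knot in $\partial M_i'$ with some unknown framing $m$. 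Attaching a handle along $\phi(\gamma)$ does extend the diffeomorphism, so the result is diffeomorphic to $X_n(K_i)$; but nothing yet identifies that result with $X_n(K_i')$, which is what the contradiction requires. Since the manifolds are meant to be exotic, one cannot expect any diffeomorphism to respect a preferred boundary identification---that is essentially what exoticness denies---so this assumption begs the question, and your proposal offers no mechanism for removing it.

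The paper fills exactly this gap with a 3-dimensional argument (Theorem \ref{thm:reduce}). The common boundary decomposes along an essential torus into the knot exterior $E = S^3_0(C)\setminus N(K)$ and a piece $V_0(P)\cong V_0(Q)$, which is verified to be hyperbolic; hypotheses on the JSJ decomposition of $E$ force any boundary diffeomorphism $\phi$ to preserve this decomposition up to isotopy, hence to carry $\gamma \subset E$ into $E$. The uniqueness of the nullhomologous slope on $\partial E$, together with Gluck's theorem that every self-homeomorphism of $S^1\times S^2$ preserves $S^1\times\{pt\}$ up to isotopy, then shows that $\phi(\gamma)$ can be isotoped to a meridian of the dotted circle, so the 2-handle attached along it cancels the 1-handle; tracking the satellite structure identifies the resulting trace as $X_n(Q_n(K))$, with the correct framing forced by the intersection form. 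Some version of this control over \emph{arbitrary} boundary diffeomorphisms is indispensable, and it is where the hypotheses on the links $K_i \cup C$ (hyperbolicity, volume bounds) actually get used. Two smaller points: your chosen knots must also avoid the exceptional case of Theorem \ref{thm:nu} (which the paper handles by arranging $\nu \geq 2$ on one side), and realizing ``homeomorphic traces but different $\nu$'' is itself nontrivial---the paper needs the specific pattern pair $P,Q$, Levine's satellite formula, and concordance invariance of $\nu$ to do it.
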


\begin{figure}\center
\def\svgwidth{.8\linewidth}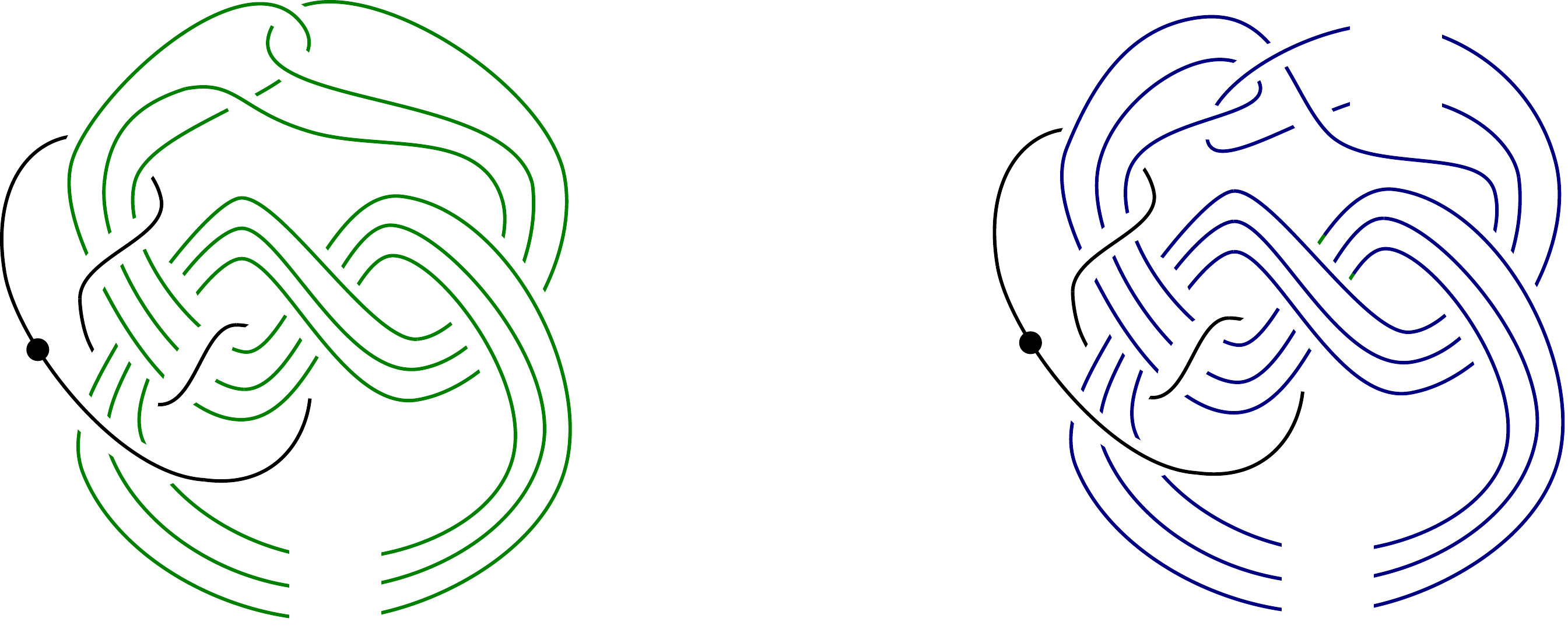 
\caption{An exotic pair of Mazur manifolds.}
\label{fig:example1}
\end{figure}

Our exotic Mazur manifolds also resolve a question about Dehn surgery.  Gabai \cite{gabai:foliations} famously proved that the only knot in $S^3$ admitting an $S^1\times S^2$ surgery is the unknot, proving Property R and answering the main question from Problem 1.16 in Kirby's problem list \cite{kirby}. However, Problem 1.16 also raises the analagous question for other integer homology spheres:

\begin{conj}[\cite{kirby-melvin,kirby}]
\label{conj:propR}
Let $Y$ be an integer homology sphere containing a knot $K$ which admits a surgery to $S^1 \times S^2$. Then $K$ is unique up to ambient diffeomorphism.
\end{conj}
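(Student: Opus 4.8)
The plan is to \emph{disprove} Conjecture~\ref{conj:propR}: as the abstract indicates, the resolution of this case of Problem~1.16 is negative, and I would refute the statement by extracting, from each exotic pair produced by Theorem~\ref{thm:mazur}, a single integer homology sphere $Y$ carrying two knots that each admit an $S^1\times S^2$ surgery but are not related by any self-diffeomorphism of $Y$.

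First I would recall how a Mazur manifold encodes such a knot. Write a Mazur manifold $W$ as $(S^1\times B^3)\cup_{(K,f)} h$, where $h$ is a $2$-handle attached along a knot $K\subset S^1\times S^2=\partial(S^1\times B^3)$ with framing $f$; then $\partial W=Y$ is the surgery on $S^1\times S^2$ along $(K,f)$. Surgery duality hands us a dual knot $K^\ast\subset Y$ --- the belt circle of $h$, equivalently the boundary of its cocore --- together with a surgery that returns $S^1\times S^2$. Since $W$ is contractible, $Y$ is an integer homology sphere, so every knot in $Y$ is null-homologous; as $S^1\times S^2$ has $H_1=\mathbb{Z}$, the reducing slope must be the canonical $0$-framing and is therefore determined by $K^\ast$ alone. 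Thus each $W$ supplies a canonical knot $K^\ast\subset Y$ with an $S^1\times S^2$ surgery.

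Now take an exotic pair $W_1,W_2$ from Theorem~\ref{thm:mazur}. Being homeomorphic, they have homeomorphic --- hence, in dimension three, diffeomorphic --- boundaries, so after fixing an identification I obtain two knots $K_1^\ast,K_2^\ast$ in a single homology sphere $Y$, each with an $S^1\times S^2$ surgery. I claim no self-diffeomorphism of $Y$ carries $K_1^\ast$ to $K_2^\ast$. The key point is that $W_i$ is \emph{reconstructed} from the pair $(Y,K_i^\ast)$: attaching the dual $2$-handle to a collar of $Y$ along $K_i^\ast$ produces a cobordism from $Y$ to $S^1\times S^2$, and capping the $S^1\times S^2$ end with $S^1\times B^3$ (equivalently, attaching the dual $3$- and $4$-handles) returns $W_i$. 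Consequently an ambient diffeomorphism $\phi\colon Y\to Y$ with $\phi(K_1^\ast)=K_2^\ast$ would promote to a diffeomorphism $W_1\cong W_2$, contradicting that the pair is exotic. Hence $K_1^\ast$ and $K_2^\ast$ are inequivalent, refuting the conjecture, and running this over the infinite family of Theorem~\ref{thm:mazur} yields infinitely many counterexamples.

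The crux is justifying that this reconstruction is canonical and that $\phi$ truly extends to the $4$-dimensional cap. This is where I would invoke the Laudenbach--Po\'enaru theorem: every self-diffeomorphism of $S^1\times S^2$ extends over $S^1\times B^3$, so the cap is unique up to diffeomorphism and the diffeomorphism of $S^1\times S^2$ induced by $\phi$ extends over it. The non-separating $2$-sphere used for the dual $3$-handle is unique up to isotopy, and the final $B^4$-cap is canonical by Cerf's theorem that $\mathrm{Diff}(S^3)$ is connected; assembling these lets the diffeomorphism of $(Y,K_i^\ast)$ extend handle-by-handle to $W_1\to W_2$. The remaining bookkeeping of framings and orientations is routine, precisely because the $0$-framing is canonical in a homology sphere. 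The genuinely hard input, then, does not live in this reduction at all --- it is entirely contained in Theorem~\ref{thm:mazur}, whose knot-trace invariant $\nu$ furnishes the smooth obstruction guaranteeing $W_1\not\cong W_2$.
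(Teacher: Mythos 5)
Your proposal is correct and takes essentially the same route as the paper: the paper's proof of Corollary~\ref{cor:propR} likewise passes from a hypothetical diffeomorphism of $(Y,K_1^\ast)\cong (Y,K_2^\ast)$ to a diffeomorphism of $(S^1\times S^2,k_1)\cong(S^1\times S^2,k_2)$ via surgery duality (its Lemma~\ref{lem:duals}, which is exactly your ``extend across the dual 2-handle'' step), then uses Laudenbach--Po\'enaru to extend over $S^1\times B^3$ and finally over the 2-handles, contradicting the exoticness supplied by Theorem~\ref{thm:mazur}. Your explicit remark that the reducing slope is forced to be the canonical $0$-framing in a homology sphere is the same framing bookkeeping that the paper encodes in the framed-knot hypotheses of Lemma~\ref{lem:duals}.
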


An integer homology sphere $Y$ contains a knot $K$ with a surgery to $S^1 \times S^2$ if and only if $Y$ bounds a Mazur manifold; in this case $K$ is the belt sphere of the 2-handle. Each pair of exotic Mazur manifolds from Theorem~\ref{thm:mazur} have the same boundary $Y$, and thus $Y$ comes equipped with two such belt spheres. It is not hard to show that the inequivalence of the smooth structures implies that these belt spheres are inequivalent, hence yield counterexamples to Conjecture~\ref{conj:propR}; see \S\ref{subsec:propR}.

\begin{cor}\label{cor:propR}
There exist infinitely many irreducible integer homology spheres each of which contains a pair of distinct knots along which zero-surgery yields $S^1 \times S^2$.
\end{cor}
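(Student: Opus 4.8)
The plan is to realize the desired knots as the belt spheres of the $2$-handles of the exotic Mazur manifolds from Theorem~\ref{thm:mazur}, and then to transfer the failure of diffeomorphism from the bounding $4$-manifolds down to the knots themselves. First I would fix an exotic pair $W_1, W_2$ with common boundary $Y$, as supplied by Theorem~\ref{thm:mazur}, and let $K_i \subset Y$ be the belt sphere of the $2$-handle of $W_i$. Writing $W_i = (S^1 \times D^3)\cup h^2$, where $S^1\times D^3 = B^4 \cup h^1$ has boundary $S^1 \times S^2$ and $Y$ is surgery on the attaching circle of $h^2$, handle/surgery duality shows that surgery on $K_i$ along the framing induced by $h^2$ recovers $\partial(S^1\times D^3) = S^1\times S^2$. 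Since $Y$ is an integer homology sphere while $H_1(S^1\times S^2)=\mathbb{Z}$, this framing is forced to be the $0$-framing. This is precisely the correspondence recorded just before the statement, so each $K_i$ is a knot in $Y$ whose zero-surgery is $S^1\times S^2$.

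The heart of the argument is to reconstruct $W_i$ from the pair $(Y,K_i)$ up to diffeomorphism. Given such a pair, attach a $2$-handle to $Y\times I$ along $K_i\times\{1\}$ with the $0$-framing; the resulting cobordism $Z_i$ has boundary $Y \sqcup (S^1\times S^2)$, and capping the $S^1\times S^2$ end with $S^1\times D^3$ returns a manifold diffeomorphic to $W_i$. Because the $0$-framing is canonically determined by the homological data of the homology sphere, any diffeomorphism $\phi\colon (Y,K_1)\to (Y,K_2)$ extends (as $\phi\times\id$ across the $2$-handles) to a diffeomorphism $Z_1 \cong Z_2$ restricting to a self-diffeomorphism of the $S^1\times S^2$ boundary. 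By the theorem of Laudenbach and Poénaru, every self-diffeomorphism of $S^1\times S^2$ extends over $S^1\times D^3$, so the gluing extends across the caps to produce a diffeomorphism $W_1\cong W_2$, contradicting Theorem~\ref{thm:mazur}. Hence no self-diffeomorphism of $Y$ carries $K_1$ to $K_2$, so the two belt spheres are genuinely distinct and violate Conjecture~\ref{conj:propR}.

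To obtain infinitely many examples I would run this for the infinitely many exotic pairs of Theorem~\ref{thm:mazur}, distinguishing the resulting boundaries $Y^{(n)}$ by a classical $3$-manifold invariant (the construction is arranged so that the $Y^{(n)}$ are pairwise non-homeomorphic). Irreducibility of each $Y^{(n)}$ is the one assertion that does not follow formally from the surgery picture; it must be verified directly from the explicit presentation of $Y^{(n)}$, for instance by exhibiting a hyperbolic structure.

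I expect the reconstruction step to be the main obstacle: everything hinges on the pair $(Y,K_i)$ determining $W_i$ up to diffeomorphism, which in turn relies on the uniqueness of the $S^1\times S^2$ filling via Laudenbach–Poénaru. The supporting bookkeeping — matching the induced $0$-framings and orientations so that $\phi$ genuinely extends across the $2$-handles and caps — is routine but is what makes the four-dimensional exotic phenomenon descend cleanly to inequivalence of the knots.
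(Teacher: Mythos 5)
Your proposal is correct and takes essentially the same route as the paper: the knots are the belt spheres, whose surgery framings are forced to be zero-framings by the homology of $Y$, and a hypothetical diffeomorphism of pairs $(Y,K_1)\cong(Y,K_2)$ is extended -- using Laudenbach--Po\'enaru as the key input -- to a diffeomorphism of the Mazur manifolds, contradicting Theorem~\ref{thm:mazur}. The only cosmetic difference is that you rebuild each $W_i$ upside down from $Y$ (2-handle cobordism plus $S^1\times D^3$ cap), whereas the paper first transfers the diffeomorphism to the pairs $(S^1\times S^2,k_i)$ via its surgery-duals lemma (Lemma~\ref{lem:duals}) and then extends over $S^1\times B^3$ and the 2-handles; these are the same argument with the handle decomposition read in opposite orders.
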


As mentioned above, there are no known smooth 4-manifold invariants suited to directly distinguishing exotic pairs of compact, contractible 4-manifolds. Instead, to our pairs of homeomorphic Mazur manifolds $W$ and $W'$ we associate knot traces $X$ and $X'$. A \emph{knot trace} $X_n(K)$ is a 4-manifold obtained by attaching an $n$-framed 2-handle to $B^4$ along $K$. The main idea in our proof of Theorem~\ref{thm:mazur} is to use 3-dimensional techniques to show that that if the Mazur manifolds $W$ and $W'$ are diffeomorphic then the associated traces $X$ and $X'$ are diffeomorphic; see \S\ref{subsec:associatedtraces}. 

It then suffices to distinguish the smooth structures on these knot traces. 
Exotic knot traces were first produced by Akbulut in \cite{akbulut:exotic}, and further examples appear in \cite{kalmar-stipsicz,yasui:conc}. By repurposing the concordance invariant $\nu$ from knot Floer homology \cite{oz-sz:rational}, we develop a new integer-valued invariant of smooth, oriented knot traces.

\begin{thm}\label{thm:nu}
If the oriented knot traces $X_n(K)$ and $X_n(K')$ are diffeomorphic, then $\nu(K)=\nu(K')$ except possibly if $n <0$ and $\{\nu(K), \nu(K')\}=\{0,1\}$.
\end{thm}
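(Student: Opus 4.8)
The plan is to show that $\nu(K)$ is determined by the diffeomorphism type of the oriented knot trace $X_n(K)$, up to the single exceptional ambiguity. The essential point is that $\nu$ must be recovered from data intrinsic to the 4-manifold $X_n(K)$, rather than from the knot $K$ itself—since many different knots can give rise to diffeomorphic traces. My first step would be to recall the characterization of $\nu$ in terms of the large-surgery formula: for $n$ sufficiently large, the Heegaard Floer invariant $\widehat{HF}$ of the $n$-surgery $S^3_n(K)$, together with its $\mathrm{spin}^c$ decomposition, detects whether $\nu(K)$ equals $0$ or is positive, via the rank of $\widehat{HF}(S^3_n(K),\s)$ in the appropriate $\mathrm{spin}^c$ structure. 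The key observation is that $S^3_n(K) = \partial X_n(K)$, so the boundary 3-manifold and its Floer homology are genuine invariants of the trace.

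\begin{proof}[Proof sketch]
First I would reduce the statement about $\nu$ to a statement detectable from the boundary $\partial X_n(K) = S^3_n(K)$ together with the filtration/$\mathrm{spin}^c$-refined Floer-theoretic data carried by $X_n(K)$. Using the \OS\ rational surgery formula \cite{oz-sz:rational}, the invariant $\nu(K)$ is encoded in the behavior of the maps relating $\widehat{HF}(S^3_n(K))$ to $\widehat{HF}(S^3)$ induced by the two-handle cobordisms, organized over $\mathrm{spin}^c$ structures. Concretely, $\nu(K) \le 0$ if and only if a certain class survives, and whether $\nu(K)=0$ or $\nu(K)>0$ is read off from ranks or from the vanishing of particular cobordism maps. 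The central claim to establish is that these ranks, and the associated grading shifts, depend only on the diffeomorphism type of $X_n(K)$ as an oriented 4-manifold, because both the surgered boundary and the cobordism maps (with their $\mathrm{spin}^c$ labels and absolute gradings) are functorially associated to the trace.

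The main obstacle is matching up $\mathrm{spin}^c$ structures across a hypothetical diffeomorphism $\varphi\colon X_n(K)\to X_n(K')$. A diffeomorphism need not respect the ``natural'' enumeration of $\mathrm{spin}^c$ structures coming from the handle attachment, so I would need to show that the relevant Floer-theoretic quantity is invariant under the induced permutation of $\mathrm{spin}^c$ structures on the boundary. Here I expect the absolute $\mathbb{Q}$-grading and the $d$-invariants (or the correction-term data) to pin down enough structure: $\varphi$ restricts to a diffeomorphism of boundaries that must preserve the $d$-invariant function on $\mathrm{Spin}^c(S^3_n(K))$, and the trace provides a canonical filling that constrains which $\mathrm{spin}^c$ structure can be the ``distinguished'' one detecting $\nu$. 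The sign of $n$ enters because for $n<0$ the orientation and grading conventions make the distinguished $\mathrm{spin}^c$ structure harder to isolate, and the two values $\nu=0$ and $\nu=1$ produce Floer data that the trace cannot separate.

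Thus I would handle the case $n>0$ and $n\ge 0$ directly, showing that the full invariant $\nu(K)$ is recovered, and then treat $n<0$ separately, carefully tracking the grading shifts to identify precisely the $\{0,1\}$ collision. The exceptional case is where I expect the argument to require the most care: I would verify that all values of $\nu$ other than the pair $\{0,1\}$ remain distinguishable, likely by exhibiting a Floer-homological quantity (a rank or a $d$-invariant difference) that separates them and is manifestly diffeomorphism-invariant, while conceding that for $n<0$ no such quantity separates $\nu=0$ from $\nu=1$.
\end{proof}
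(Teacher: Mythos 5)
Your overall strategy --- recovering $\nu(K)$ from the pattern of vanishing and nonvanishing of the maps $\hfhat(S^3)\to\hfhat(S^3_n(K))$ induced by the trace cobordism in its various \spinc structures, and arguing that this pattern is an invariant of the oriented diffeomorphism type of $X_n(K)$ --- is the same as the paper's. But your sketch has a genuine gap at its center: this pattern by itself determines $\nu(K)$ only up to an additive ambiguity of $1$. In the paper's notation, if $s_{\max}$ denotes the largest \spinc label inducing a nonzero map, then $s_{\max}\le \nu(K)\le s_{\max}+1$ (Corollary \ref{cor:smax}), and the borderline equality $\nu(K)=s_{\max}$ occurs exactly when $\epsilon(K)=\nu(K)=0$ and $n\le 0$ (Proposition \ref{mapchar}(3)). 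Your proposal offers no mechanism for resolving this ambiguity. The paper's resolution is to extract the analogous quantity $s'_{\max}$ from the orientation-reversed trace $-X_n(K)\cong X_{-n}(-K)$ and play the two against each other using the relations among $\nu(K)$, $\nu(-K)$, and $\epsilon(K)$: for instance, when $n=0$ and $s_{\max}=0$, one concludes $\nu(K)=0$ if $s'_{\max}$ is defined and $\nu(K)=1$ otherwise; and when $n>0$ it can happen that \emph{no} \spinc structure on $X_n(K)$ induces a nonzero map, in which case all information must come from $-X_n(K)$. This orientation-reversal step is the crux of the proof, and its breakdown (the case $s_{\max}=0$, $s'_{\max}$ undefined, $n<0$, realized by both the figure-eight knot and the right-handed trefoil) is precisely what produces and confines the exceptional case $\{\nu(K),\nu(K')\}=\{0,1\}$ with $n<0$. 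Without it you cannot conclude even for $n\ge 0$.

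A secondary problem is the tool you propose for the \spinc-matching issue: absolute gradings and $d$-invariants of $S^3_n(K)$ are boundary data, and boundary data can never do this work --- homeomorphic traces have diffeomorphic boundaries, and the paper's own examples have diffeomorphic surgeries yet different $\nu$. For the same reason your opening appeal to ranks of $\hfhat(S^3_n(K),\s)$ and to a large-surgery characterization is off target ($n$ here is a fixed framing, possibly $\le 0$, not a parameter you may take large). The matching problem actually has a much simpler solution than you anticipate: a diffeomorphism can at worst flip the sign of the chosen generator $\sigma$ of $H_2(X_n(K))$, and conjugation invariance of \spinc cobordism maps makes the set of Chern numbers $\langle c_1(\s),\sigma\rangle$ with nonzero map symmetric under negation, so $s_{\max}$ is well defined independently of that choice.
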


We remark that we are not aware of any instances realizing the exception in the theorem. In fact, we show in Proposition \ref{prop:nudual} that for all pairs of knots $K, K'$ in the literature with $X_n(K)\cong X_n(K')$, regardless of $n$ or values of $\nu$, we have  $\nu(K)=\nu(K')$. 

The relative computability of $\nu$ makes it an especially effective trace invariant. In the proof of Theorem~\ref{thm:mazur} (and indeed in all of our examples), direct calculations are made possible using three properties of $\nu$: (1) its concordance invariance \cite{oz-sz:rational}; (2) a twist inequality (Proposition~\ref{prop:nutwist}); and (3) a formula of Levine \cite{levine:PL} which determines the behavior of $\nu$ under certain satellite operations (Corollary~\ref{cor:nu-formula}). We use $\nu$ to detect new types of exotic knot traces, including the first pair of hyperbolic knots with exotic zero-traces (in \S\ref{sec:hyperbolic}).  

The invariant $\nu$ is closely related to the Heegaard Floer concordance invariant $\tau$ \cite{oz-sz:genus,rasmussenthesis}, and it has been asked whether $\tau$ is a zero-trace invariant, e.g.~\cite[Problem 12]{bonn-questions}. Assuming a forthcoming result of Ozsv\'ath and Szab\'o, we show that the answer is \emph{no}. In particular,  Ozsv\'ath and Szab\'o defined knot invariants $\underline{\tau}$, $\underline{\epsilon}$, and $\underline{\nu}$ using bordered algebras \cite{oz-sz:matchings}, and they have announced a proof  (to appear in \cite{oz-sz:matchingsHFK}) that these invariants agree with their well-known analogs $\tau$, $\epsilon$, and $\nu$. These bordered invariants have the advantage of being highly computable; see \cite{hfk-calc}. In \S\ref{subsec:other}, we produce pairs of knots $K$ and $K'$ with diffeomorphic zero-traces such that $\underline{\tau}(K)\neq \underline{\tau}(K')$ and $\underline{\epsilon}(K) \neq \underline{\epsilon}(K')$. This shows:

\begin{thm}\label{thm:tau}
The concordance invariants $\tau$ and $\epsilon$ are not zero-trace invariants.
\end{thm}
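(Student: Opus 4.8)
The plan is to reduce Theorem~\ref{thm:tau} to a single explicit computation, exploiting the announced identifications $\underline{\tau}=\tau$, $\underline{\epsilon}=\epsilon$, and $\underline{\nu}=\nu$ of \cite{oz-sz:matchings,oz-sz:matchingsHFK}. It suffices to produce one pair of knots $K$ and $K'$ whose zero-traces $X_0(K)$ and $X_0(K')$ are diffeomorphic but for which $\tau(K)\neq\tau(K')$ and $\epsilon(K)\neq\epsilon(K')$; once such a pair is in hand, the diffeomorphism type of the zero-trace visibly fails to determine either invariant, which is precisely the assertion.

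First I would build the pair so that the diffeomorphism $X_0(K)\cong X_0(K')$ is manifest, using the same trace-equivalence machinery employed elsewhere in the paper. The natural device is to present $K$ and $K'$ as the two knots extracted from a single link by surgering along its remaining components, so that the two associated Kirby diagrams describe the same 4-manifold after a finite sequence of handle slides and cancellations. Verifying the diffeomorphism is then a concrete Kirby-calculus argument, requiring no Floer-theoretic input.

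Next I would compute the invariants. Since the bordered-algebra invariants $\underline{\tau}$ and $\underline{\epsilon}$ are algorithmically computable \cite{hfk-calc}, one can evaluate them directly on $K$ and $K'$ to confirm $\underline{\tau}(K)\neq\underline{\tau}(K')$ and $\underline{\epsilon}(K)\neq\underline{\epsilon}(K')$. Translating these inequalities into the corresponding statements for the classical $\tau$ and $\epsilon$ is exactly the content of the announced equivalence, completing the argument modulo that forthcoming result.

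The main obstacle is compatibility with Theorem~\ref{thm:nu}. Because $\nu$ \emph{is} a genuine zero-trace invariant, any trace-equivalent pair must satisfy $\nu(K)=\nu(K')$, and since $\tau(K)\le\nu(K)\le\tau(K)+1$ with $\epsilon$ governing which equality holds, the values of $\tau$ cannot be chosen freely. The pair must therefore be engineered so that $\tau(K)$ and $\tau(K')$ straddle a common value of $\nu$---say $\tau(K')=\tau(K)-1$, with $\epsilon(K)$ and $\epsilon(K')$ arranged so that both knots return the same $\nu$. Locating an honestly trace-equivalent pair that meets this numerical constraint, and certifying it by the bordered computation, is the crux; the surrounding steps are bookkeeping.
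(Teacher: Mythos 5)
Your strategy coincides with the paper's proof in every structural respect: reduce to exhibiting a single trace-equivalent pair, build the pair by Kirby calculus from one diagram that cancels in two ways, compute $\underline{\tau}$ and $\underline{\epsilon}$ with the bordered algorithm of \cite{hfk-calc}, and invoke the announced equivalences $\underline{\tau}=\tau$ and $\underline{\epsilon}=\epsilon$ from \cite{oz-sz:matchingsHFK}. Your constraint analysis is also exactly right: since $\nu$ is a zero-trace invariant (Theorem~\ref{thm:nu}), any example must satisfy $\nu(K)=\nu(K')$, forcing the two values of $\tau$ to differ by one with the $\epsilon$'s arranged to compensate. This is precisely what the paper's example realizes: $\underline{\tau}(K)=1$, $\underline{\epsilon}(K)=1$ (so $\nu(K)=\tau(K)=1$) versus $\underline{\tau}(K')=0$, $\underline{\epsilon}(K')=-1$ (so $\nu(K')=\tau(K')+1=1$).

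However, as written your proposal has a genuine gap: it never exhibits the pair. Theorem~\ref{thm:tau} is an existence statement, and its entire content is the witness; a description of how one would search for a witness, together with the observation that the numerical constraints do not rule one out, is not a proof that one exists. You flag this yourself (``locating an honestly trace-equivalent pair \ldots is the crux''), but the crux is left undone. The paper closes it concretely: the Kirby diagram of Figure~\ref{fig:tau-knots} --- a single 1-handle together with two 0-framed 2-handles, either of which cancels the 1-handle --- fits the dualizable-pattern setup of \cite[Theorem~2.1]{picc:shake}, so the two cancellations produce knots $K$ and $K'$ with $X_0(K)\cong X_0(K')$, and the machine computation in \cite{hfk-calc} then certifies the invariant values quoted above. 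Until you produce (or cite) such a diagram and run the computation, your argument establishes only that the approach is unobstructed, not that the theorem is true.
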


Previous work on exotic knot traces has relied heavily on the genus function of $X_n(K)$; recall that the \emph{$n$-shake genus} $\gsh^n(K)$ of $K\subset S^3$ is defined to be the minimal genus of a smoothly embedded surface generating the second homology of $X_n(K)$. In fact, to the authors' knowledge, all previously known examples of exotic knot traces can be distinguished via their genus functions using an adjunction inequality \cite{akbulut:exotic,akbulut-matveyev,kalmar-stipsicz,yasui:conc}. In light of this interest, we point out that $\nu$ provides lower bounds on the shake genus for a range of framings.

\begin{thm}\label{thm:nushakeboundclean} If $K$ is a knot in $S^3$ and $n \in \zee$ satisfies $n=0$ or $2-2\nu(K)<n\leq 2\nu(K) -2$, then $\nu(K) \leq \gsh^n(K).$
\end{thm}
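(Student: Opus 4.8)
The plan is to take a minimal-genus representative of the generator, strip off the $2$-handle, and thereby convert the statement into a slice-genus bound for a satellite of $K$. Concretely, let $\Sigma \subset X_n(K)$ be a connected, oriented, smoothly embedded surface of genus $g = \gsh^n(K)$ generating $H_2(X_n(K)) \cong \zee$. After an isotopy I may assume $\Sigma$ meets the cocore of the $2$-handle transversely in $2j+1$ points, with $j+1$ of one sign and $j$ of the other; the algebraic count is forced to be $1$ since $[\Sigma]$ is a generator, which is why the geometric count is odd. Deleting the handle, $\Sigma_0 := \Sigma \cap B^4$ is a properly embedded genus-$g$ surface in $B^4$ whose boundary is the $(2j+1)$-component link $L_{n,j}(K) \subset S^3$ consisting of $n$-framed parallel copies of $K$, with $j+1$ oriented coherently with $K$ and $j$ oriented oppositely.

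The first key observation is that $L_{n,j}(K)$ is a satellite: it is $P_{n,j}(K)$ for the winding-number-one pattern $P_{n,j} \subset S^1\times D^2$ given by $2j+1$ parallel longitudes carrying $n$ twists (so each copy is the $n$-framed pushoff) with orientations $(j+1,j)$. Thus $\Sigma_0$ exhibits a genus-$g$ surface in $B^4$ bounded by this satellite of $K$, and the problem becomes one of bounding $\nu(K)$ from above by the genus of a surface capping $P_{n,j}(K)$. The base case $j=0$ is the simplest instance: there $\Sigma_0$ is an honest slice surface for a single copy of $K$, and the slice-genus bound $\nu(K) \le g_4(K)$ yields $\nu(K)\le g$ directly.

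For the general case the plan is to compute $\nu$ of the relevant satellite and then account for the framing. Applying Levine's satellite formula (Corollary~\ref{cor:nu-formula}) to the winding-number-one pattern expresses $\nu$ of $P_{n,j}(K)$ in terms of $\nu(K)$ together with the combinatorial data of $P_{n,j}$, while the surface $\Sigma_0$ bounds this $\nu$ from above via the slice-genus inequality. The $n$ twists carried by the pattern are then absorbed using the twist inequality of Proposition~\ref{prop:nutwist}: in the window $n=0$ or $2-2\nu(K) < n \le 2\nu(K)-2$, the twisting contributes to $\nu\bigl(P_{n,j}(K)\bigr)$ exactly enough to offset the genus cost of the $2j$ extra strands, so that the $j$-dependent terms cancel and one is left with the clean inequality $\nu(K) \le g$. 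Outside this window the compensation fails, which is precisely the source of the hypothesis on $n$.

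I expect the main obstacle to be the bookkeeping for $j>0$: one must show that the ``extra'' geometric intersections of $\Sigma$ with the cocore, beyond the algebraic minimum, cannot be exploited to push the genus below $\nu(K)$. This is exactly where the interplay between Levine's formula and the twist inequality must be made precise, since the two have to interlock so that the net contribution of $P_{n,j}$ is \emph{independent of $j$} throughout the stated range of $n$. Verifying this cancellation — and checking that the range in which Proposition~\ref{prop:nutwist} applies is exactly $n=0$ together with $2-2\nu(K)<n\le 2\nu(K)-2$ — is the technical heart of the argument.
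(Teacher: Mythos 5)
Your reduction step is fine as far as it goes: removing the 2-handle does convert a genus-$g$ generator of $H_2(X_n(K))$ into a genus-$g$ surface in $B^4$ bounded by $2j+1$ parallel $n$-framed copies of $K$ with algebraic count $\pm 1$ (this is the classical move behind the signature bound $\gsh^0 \geq |\sigma|/2$). The gap is in the plan for concluding from there. First, the object you produce for $j>0$ is a \emph{link}, not a knot, and neither $\nu$ nor the slice-genus inequality $\nu \leq g_4$ is defined or proved for links in this framework; so "bounding $\nu(K)$ by the genus of a surface capping $P_{n,j}(K)$" has no meaning yet. Second, Corollary~\ref{cor:nu-formula} is not a general satellite formula: it is Levine's computation for the specific Mazur pattern $P$ (via Theorem~\ref{thm:levine}), and it says nothing about your pattern of $2j+1$ parallel strands with mixed orientations. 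Third, Proposition~\ref{prop:nutwist} is a one-sided estimate (a \emph{positive} full twist cannot increase $\nu$); it cannot produce the exact, $j$-independent cancellation you are counting on, and for the negative values of $n$ allowed by the hypothesis $2-2\nu(K)<n$ the twisting goes in the unhelpful direction. Finally, the geometric intersection number $2j+1$ cannot in general be isotoped down to $1$, so the base case $j=0$ is not a case one can reduce to; if it were, every shake-slice knot would be slice.

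The paper's proof runs on entirely different machinery, and the hypothesis on $n$ has a different source than you predict. The engine is Proposition~\ref{adjineq}, an adjunction inequality for surfaces in cobordisms proved via the mapping cone formula and the Borromean knot computation: a \spinc structure inducing a nonzero map on $\hfhat$ satisfies $|\langle c_1(\s),S\rangle| + S\cdot S \leq 2g$, with the stronger bound $\leq 2g-2$ exactly when $S\cdot S > -2g$. Combining this with Proposition~\ref{mapchar} (which \spinc structures on $X_n(K)$ induce nontrivial maps, in terms of $\nu$ and $\epsilon$) yields Proposition~\ref{nushakebound}, and the window $2-2\nu(K)<n\leq 2\nu(K)-2$ emerges from playing the weak bound (1) against the strong bound (2) there, i.e.\ from when the condition $n > -2\gsh^n(K)$ can be forced. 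The remaining case $n \geq 0$, $\gsh^n(K)=0$ needs a separate homology-cobordism argument invoking Theorem~\ref{thm:nunegdef}. Interestingly, the Mazur pattern, Levine's formula, and the twist inequality \emph{do} appear in the paper's toolkit, but one level down: they are used to prove Theorem~\ref{thm:nunegdef} itself (replace a punctured disk by the pattern $P$ to trade the $\nu$ bound for a $\tau$ bound), not to absorb the extra intersections of $\Sigma$ with the cocore, which is the step your proposal leaves unresolved.
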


A sharper, albeit more complicated, statement appears in Proposition \ref{nushakebound}.  We also observe that Theorem \ref{thm:nushakeboundclean} gives an extension of  the shake-slice Bennequin inequality for Legendrian knots in $S^3$, cf.~Corollary \ref{cor:shsliceben}.

However, we emphasize that $\nu$ can directly distinguish between exotic smooth structures on knot traces $X$ and $X'$ even when the genus functions of $X$ and $X'$ are unknown. We illustrate this difference in \S\ref{subsubsec:traces}, where we use $\nu$ to distinguish pairs of homeomorphic knot traces that are not readily distinguished by their genus functions.

This paper is organized as follows: In \S\ref{sec:exmaz}, we construct homeomorphic Mazur manifolds and prove Theorem \ref{thm:mazur}. In \S\ref{subsec:propR}, we prove Corollary~\ref{cor:propR}. In \S\ref{sec:HF}, we recall the relevant background on knot Floer homology and prove Theorems \ref{thm:nu} and \ref{thm:tau}. The casual reader can consider the paper complete after \S\ref{sec:HF}. The remainder of the paper serves to develop some tools and examples related to our main work which may be of future use.  In \S\ref{sec:bounds}, we prove Theorem \ref{thm:nushakeboundclean}. In \S\ref{sec:examples}, we describe modifications of our main constructions that yield exotica with additional interesting properties: namely exotic Mazur pairs admitting Stein structures, exotic Mazur pairs whose handlebody diagrams have unknotted 2-handles, exotic Mazur pairs with hyperbolic boundary and exotic zero-traces with hyperbolic boundary, and exotic zero-traces with unknown genus functions.

For additional documentation regarding our computer calculations, see \cite{exotic-data}.

\smallskip
\emph{Acknowledgements.} \ 
This project originated at the conference \emph{Geometric Structures on 3- and 4-Manifolds} at IUC Dubrovnik in summer 2018; we thank the organizers for providing such a stimulating environment.  Frank Swenton's Kirby calculator \cite{KLO} was indispensable at various points of this project.  This work benefited greatly from several insightful conversations with Danny Ruberman.  L.P. is indebted to her adviser, John Luecke, for his guidance and expertise throughout this project. In winter 2019 Min Hoon Kim and JungHwan Park informed the authors that they have an independent proof of Corollary \ref{cor:propR}; see \cite{kim-park:propR}. 

K.H.~was supported in part by NSF grant DMS-1803584. T.M. was supported in part by a grant from the Simons Foundation (523795, TM). L.P. was supported in part by NSF grant DMS-1902735.

\section{Exotic Mazur manifolds}\label{sec:exmaz}

We begin by constructing pairs of homeomorphic Mazur manifolds in \S\ref{subsec:construction}. In \S\ref{subsec:associatedtraces}, we construct associated pairs of knot traces and show that any diffeomorphism between certain homeomorphic Mazur manifolds induces a diffeomorphism between their associated traces. In  \S\ref{subsec:obstruction}, we use the trace invariance of $\nu$ (Theorem \ref{thm:nu}) to show that certain pairs of associated traces are not diffeomorphic, hence the underlying  Mazur pairs are exotic. 

\subsection{Constructing homeomorphic Mazur manifolds}\label{subsec:construction}

Our construction will rely on the satellite operation with the patterns  $P$ and $Q$ in the solid torus $S^1 \times D^2$ shown in Figure~\ref{fig:patterns}. The pattern $P$ is the well-known Mazur pattern, and the pattern $Q$ was defined by Yasui \cite{yasui:conc}. In all figures, a boxed integer indicates positive full twists. For any pattern $R$, we define $R_n$ to be the pattern obtained by adding $n$ positive full twists to $R$ along a disk $\{pt\} \times D^2 \subset S^1 \times D^2$ that meets $R$ transversely.

\begin{figure}\center
\def\svgwidth{.9\linewidth}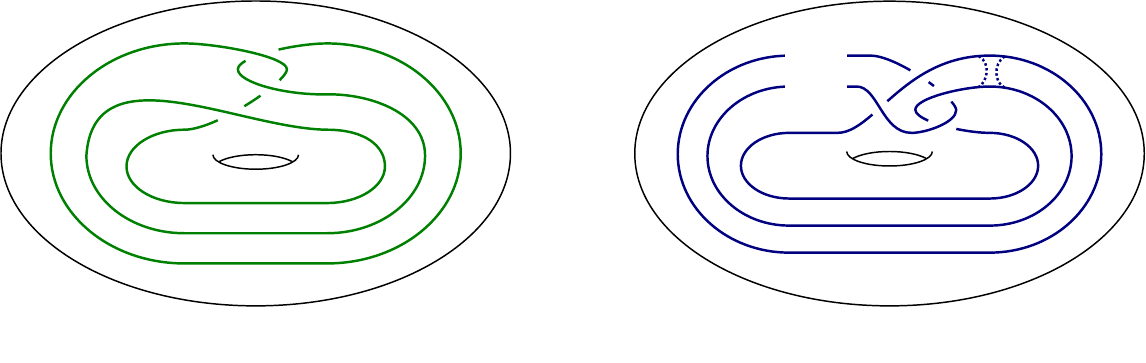 
\caption{The Mazur pattern $P$ and Yasui's concordance pattern $Q$.}\label{fig:patterns}
\end{figure}

\begin{rem} 
For all integers $n$, the pattern $Q_n$ is concordant in $S^1\times D^2\times [0,1]$ to the pattern $S^1\times \{pt\}$. This can be seen using the saddle move depicted in Figure \ref{fig:patterns}.
\end{rem}

Let $Z$ be a smooth 4-manifold and $K\subset \partial Z$ be a framed knot. We define knots $P(K)$ and $Q(K)$ in $\partial Z$ by taking the satellite of $K$ with patterns $P$ and $Q$ respectively, and we define 4-manifolds $Z_P$ and $Z_Q$ by attaching 2-handles to $Z$ along $P(K)$ and $Q(K)$, respectively. The 2-handles are framed so that a pushoff of $P(K)$ is homologous in $ V\smallsetminus \mathring{N}(P)$ to $S^1\times\{pt\}\subset\partial V$, and similarly for $Q(K)$.

\begin{prop}\label{prop:homeo}
$Z_P$ and $Z_Q$ are homeomorphic.
\end{prop}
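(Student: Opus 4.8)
The plan is to realize the homeomorphism as the product structure of a topological $h$-cobordism arising from a concordance between the patterns $P$ and $Q$ inside the solid torus $V = S^1 \times D^2$.

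First I would show that $P$ and $Q$ are topologically concordant to one another in $V \times [0,1]$, through concordances compatible with the framings fixed before the statement. For $Q$ this follows from the preceding Remark, which produces a concordance from $Q$ to the core $S^1 \times \{pt\}$. For the Mazur pattern $P$, the input is that $P$ is \emph{topologically} (but not smoothly) concordant to the core; this is a consequence of Freedman's theorem \cite{freedman}, since the pattern $P$ differs from the core by a clasp whose associated Alexander invariants are trivial (e.g.\ the satellite $P(U)$ of the unknot is unknotted), so that the clasp bounds a locally flat annulus in $V \times [0,1]$. Composing the two concordances gives a locally flat concordance from $P$ to $Q$ whose boundary longitudes are exactly the prescribed framings. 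This topological-but-not-smooth distinction is precisely what one wants: $Q(K)$ is smoothly concordant to $K$ while $P(K)$ is only topologically so, which is what allows the smooth invariant $\nu$ to separate $Z_P$ from $Z_Q$ later.

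Next I would transport this concordance into $\partial Z$ and build a cobordism. Under the framed identification $N(K) \cong V$, the pattern concordance sits inside $N(K) \times [0,1] \subset \partial Z \times [0,1]$ and defines a locally flat annulus $C$ from $P(K)$ to $Q(K)$; since $C$ lies in the solid torus times interval, the two $2$-handle framings correspond across $C$. I then attach a single five-dimensional $2$-handle to $Z \times [0,1]$ along a tubular neighborhood $C \times D^2$ of $C$ in the lateral boundary $\partial Z \times [0,1]$, obtaining a cobordism $W$ whose two ends are exactly $Z_P$ and $Z_Q$. Because $W$ deformation retracts onto $Z$ with a single $2$-cell attached along the core circle of the annulus $C$ --- a circle freely homotopic to both $P(K)$ and $Q(K)$ in $W$ --- the inclusions $Z_P \hookrightarrow W \hookleftarrow Z_Q$ are homotopy equivalences, so $W$ is an $h$-cobordism.

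Finally I would apply the topological $s$-cobordism theorem to conclude that $W \cong Z_P \times [0,1]$, and hence $Z_P \cong Z_Q$. The main obstacle is precisely this last step: in dimension five the $s$-cobordism theorem requires the fundamental group to be good in the sense of Freedman, and one must know the Whitehead torsion of $W$ vanishes. In the situations arising here $\pi_1(Z_P) = \pi_1(Z_Q)$ is trivial or infinite cyclic, which is good and has vanishing Whitehead group, so Freedman's theory \cite{freedman} applies and produces the desired homeomorphism; it moreover restricts to a homeomorphism of the boundaries.
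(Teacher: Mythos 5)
Your construction of the cobordism $W$ is fine, but the last step is where the argument breaks, and the break is fatal rather than technical. $W$ is a cobordism between manifolds \emph{with boundary}, and the topological $s$-cobordism theorem does not apply to it: its lateral boundary is the surgery of $\partial Z \times [0,1]$ along the annulus $C$, a cobordism from $\partial Z_P$ to $\partial Z_Q$ which nothing in your argument shows to be a product. Without lateral control, no relative $h$-cobordism theorem can be invoked, and in fact the statement you are implicitly using --- that attaching $2$-handles along concordant framed knots yields homeomorphic $4$-manifolds --- is simply false. Take $Z = B^4$, $K_0$ the unknot and $K_1$ the (slice, hence smoothly concordant to $K_0$) stevedore knot, both $0$-framed: your construction produces exactly such an absolute ``$h$-cobordism'' $W$ between $X_0(K_0) = S^2 \times D^2$ and $X_0(K_1)$, yet these are not homeomorphic, since a homeomorphism of compact manifolds restricts to their boundaries and, by Property R \cite{gabai:foliations}, no nontrivial knot has $0$-surgery $S^1 \times S^2$. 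In the situation of the proposition, the homeomorphism of boundaries (equivalently $V_0(P) \cong V_0(Q)$, noted in the Remark following Proposition~\ref{prop:homeo}) is itself something that must be proved, and it comes out of the paper's handle calculus, not out of any concordance between $P$ and $Q$.

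The paper's proof is different in kind and avoids all of this: the handle moves of Figure~\ref{fig:homeo} exhibit $Z_P$ and $Z_Q$ as differing by a twist along an embedded Mazur cork, and Freedman's work \cite{freedman} is applied \emph{locally} --- cutting out and regluing a compact contractible submanifold does not change the homeomorphism type. That localization is also what makes the proposition true for an \emph{arbitrary} smooth $4$-manifold $Z$, as stated; your global $s$-cobordism approach would additionally require $\pi_1(Z_P)$ to be a good group with vanishing Whitehead group, which is unknown or false outside special cases such as trivial or infinite cyclic fundamental group. Two smaller points: the fact that the Mazur pattern is topologically concordant to the core of $V$ is true, but it is a nontrivial theorem requiring Freedman's disk embedding theorem over $\pi_1 = \zee$ (via the strong winding number one condition, cf.~\cite{levine:PL}), not a consequence of the heuristic that ``the clasp has trivial Alexander invariants''; and even granting it, the argument cannot be completed for the reasons above.
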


\begin{rem}
A similar statement and proof appears in \cite{yasui:conc}. Both arguments rely on a handle calculus trick originating in \cite{akbulut2dclasses} and \cite{akbulut:exotic}. 
\end{rem}

\begin{proof}
Figure \ref{fig:homeo}(a) is a schematic handle diagram representing the handle attached to $Z$ to yield $Z_P$, where the framing on the knot $K$ is taken to be $n$, and (h) is the corresponding diagram for $Z_Q$. The intervening sequence of diagrams shows that $Z_P$ and $Z_Q$ are related by twisting along an embedded Mazur cork. Since cork twisting amounts to cutting out and regluing a contractible submanifold, it  preserves homeomorphism type by Freedman \cite{freedman}. 
\end{proof}

\begin{figure}\center
\def\svgwidth{.8\linewidth}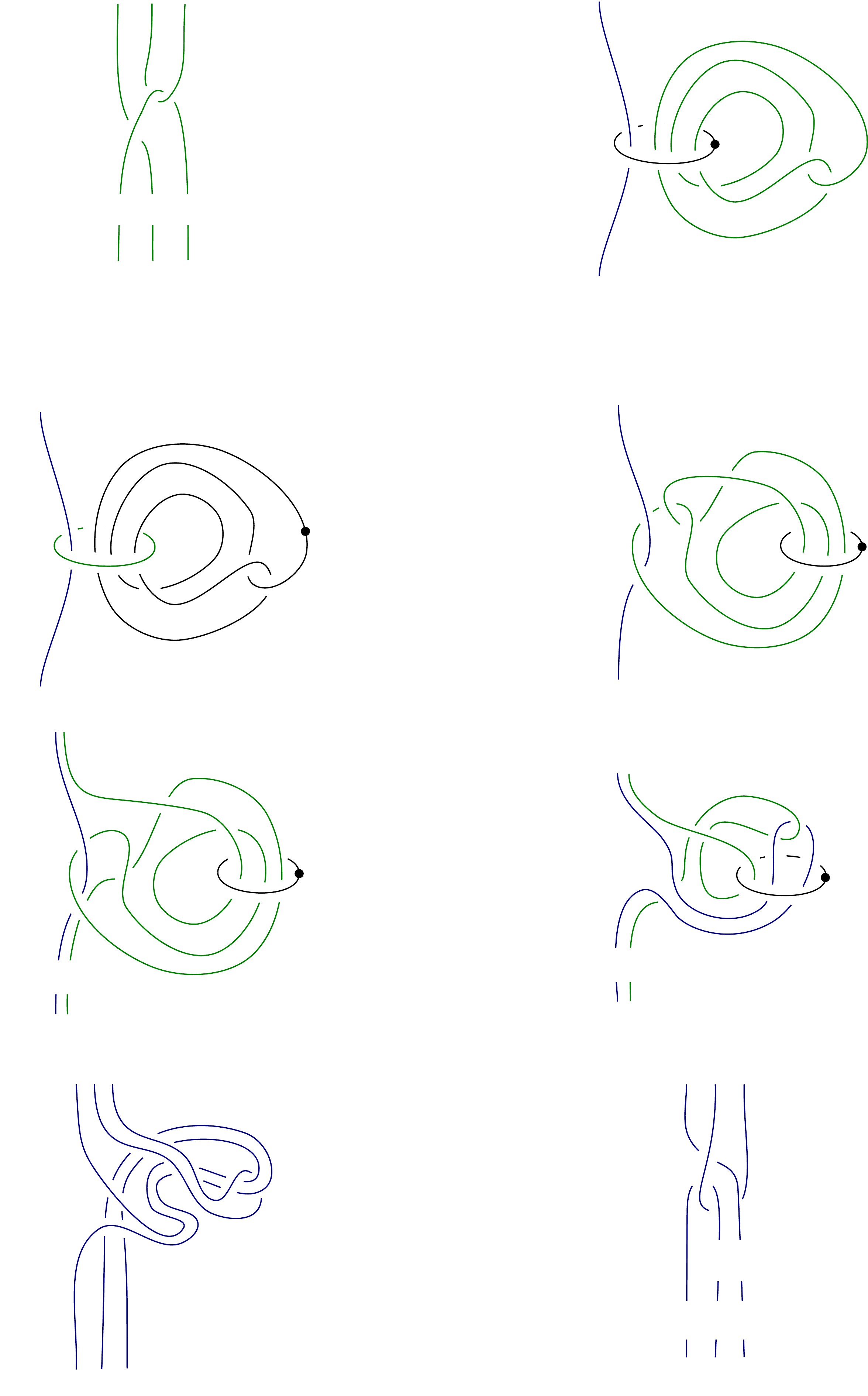 
\caption{Handle calculus relating two homeomorphic 4-manifolds differing by a single 2-handle attachment. The transition from (a) to (b) consists of a 1-/2-pair creation and handleslides; (b) to (d) is a cork twist and isotopy; (d) to (f) is a handleslide and isotopy; (f) to (h) is a pair of handleslides, followed by a 1-/2-pair cancellation and isotopy.}
\label{fig:homeo}
\end{figure}

\begin{figure}\center
\def\svgwidth{.35\linewidth}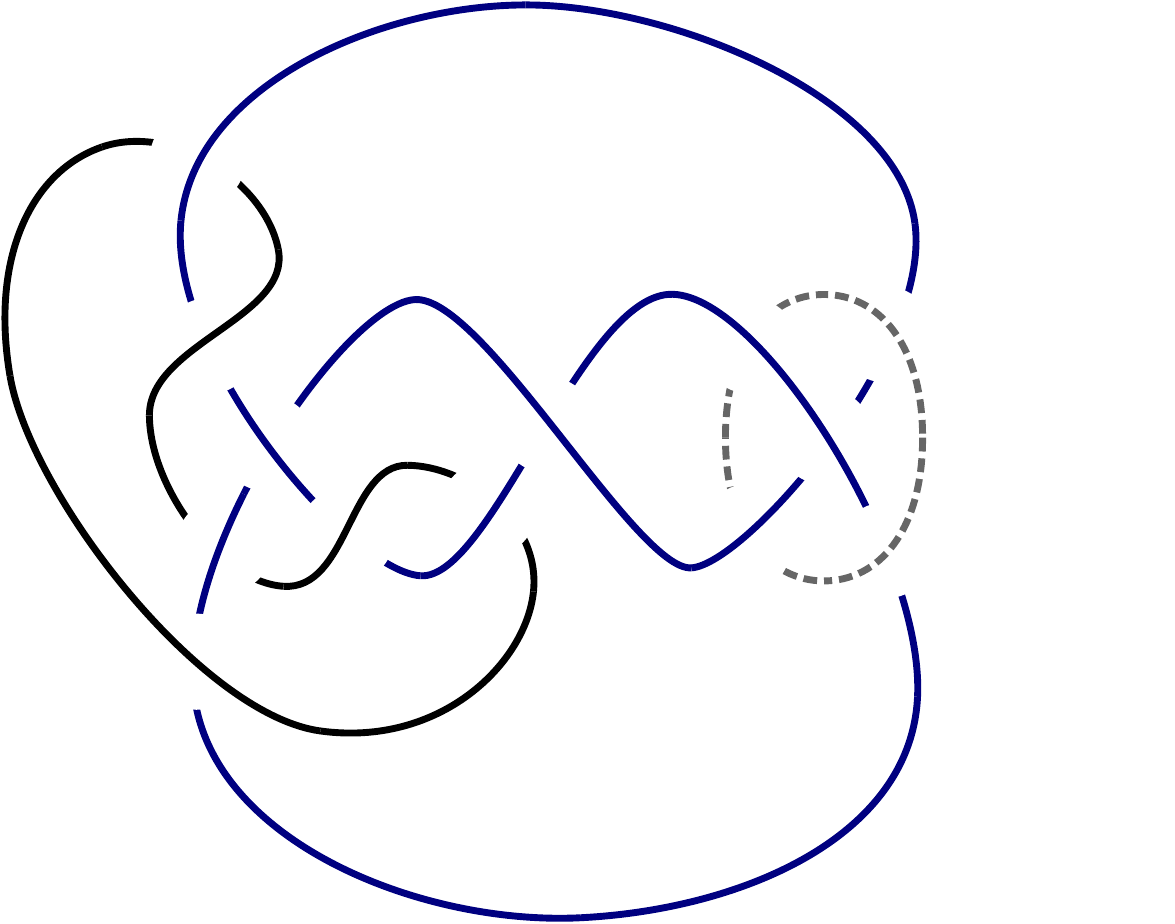 
\caption{An example link $K\cup C$, affectionately known as the trefoil on the phone}\label{fig:tref-phone}
\end{figure}

\begin{defn}
For any knot $J$ in $V=S^1\times D^2$ with winding number one, define the \emph{zero-framing} of $J$ to be unique framing curve homologous to $S^1\times \{pt\} \subset S^1 \times \partial D^2$.
Define $V_0(J)$ to be the 3-manifold obtained from $V$ by zero-framed Dehn surgery along $J$. 
\end{defn}

\begin{rem}
The proof of Proposition \ref{prop:homeo} also shows that $V_0(P)\cong V_0(Q)$. Indeed, we can consider Figure \ref{fig:homeo} to be a sequence of handle moves taking place in $V$, specializing to $n = 0$.
\end{rem}

By definition, a Mazur manifold is any contractible 4-manifold built with a single 0-, 1-, and 2-handle. It is straightforward to check that a manifold with such a handle decomposition is contractible if and only if the attaching curve of the 2-handle passes algebraically once over the 1-handle. Our construction for Theorem~\ref{thm:mazur} begins with a 2-component link $L=K \cup C$ in $S^3$, where $C$ is an unknot and $\lk(K,C)=\pm 1$; for an example see Figure~\ref{fig:tref-phone}. Define a pair of Mazur manifolds $W_{L,n}$ and $W'_{L,n}$ by viewing $C$ as a dotted circle and attaching an $n$-framed 2-handle along $P_n(K)$ and $Q_n(K)$, respectively.  When $n=0$ we will just denote these manifolds $W_L$ and $W'_L$. See Figure~\ref{fig:example1} for an example of $W_L$ and $W'_L$ when $K\cup C$ is the trefoil on the phone from Figure~\ref{fig:tref-phone}. Note that we can view $K$ as a curve in the boundary of the $S^1\times B^3$ given by the (dotted) 1-handle $C$. Thus applying Proposition~\ref{prop:homeo} to $Z  = S^1\times B^3$ yields:

\begin{cor} Any such Mazur manifolds $W_{L,n}$ and $W'_{L,n}$ are homeomorphic.
\end{cor}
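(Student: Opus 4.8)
The plan is to recognize this corollary as a direct application of Proposition~\ref{prop:homeo} with $Z = S^1\times B^3$, so that essentially all of the work lies in matching the two constructions. First I would identify the handlebody $B^4 \cup (\text{dotted } C)$ with $Z = S^1\times B^3$. Since $C$ is an unknot, dotting it produces a 1-handle whose boundary is the result of $0$-surgery on $C$, namely $\partial Z = S^1 \times S^2$. Because $\lk(K,C) = \pm 1$, the knot $K$ lies in the complementary solid torus $S^3 \smallsetminus \mathring{N}(C)$ with winding number one, and therefore descends to a winding-number-one knot $K \subset \partial Z = S^1 \times S^2$, exactly as required to form the satellites $P(K)$ and $Q(K)$.

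The only real content is then to reconcile the framing conventions so that $Z_P = W_{L,n}$ and $Z_Q = W'_{L,n}$. I would take the framing on $K \subset \partial Z$ used to form the satellites to be the $n$-framing. The key identity is that forming the satellite $P(K)$ with respect to the $n$-framing produces the same knot as forming $P_n(K)$ with respect to the $0$-framing: changing the framing identification $N(K) \cong S^1\times D^2$ by $n$ inserts exactly $n$ full twists along a meridional disk, which is precisely the operation converting $P$ into $P_n$. The same holds verbatim for $Q$. It then remains to check that the 2-handle framing dictated in Proposition~\ref{prop:homeo} — the one making a pushoff of $P(K)$ homologous in $V \smallsetminus \mathring{N}(P)$ to $S^1\times\{pt\}$ — agrees with the $n$-framing prescribed in the construction of $W_{L,n}$. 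With these identifications in hand, $Z_P$ and $Z_Q$ are literally the manifolds $W_{L,n}$ and $W'_{L,n}$.

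Finally, Proposition~\ref{prop:homeo} asserts $Z_P \cong Z_Q$, which therefore yields the desired homeomorphism $W_{L,n} \cong W'_{L,n}$. I expect the main obstacle to be purely a matter of careful bookkeeping: tracking how the parameter $n$ is distributed among the pattern (the subscript in $P_n$, $Q_n$), the framing used to build the satellite, and the framing of the attached 2-handle, and confirming that all three conventions are consistent with those underlying Proposition~\ref{prop:homeo}. No new geometric input beyond that proposition is needed.
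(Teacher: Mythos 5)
Your proposal is correct and takes essentially the same approach as the paper: the paper obtains this corollary precisely by viewing the dotted unknot $C$ as the $1$-handle $S^1\times B^3$ and applying Proposition~\ref{prop:homeo} with $Z = S^1\times B^3$, exactly as you do. Your framing bookkeeping---that the satellite of $K$ formed with the $n$-framed identification of $N(K)\cong V$ is the knot $P_n(K)$ (resp.\ $Q_n(K)$) formed with the $0$-framing, and that the homological framing condition of Proposition~\ref{prop:homeo} becomes the $n$-framing on the 2-handle---is precisely the convention the paper uses implicitly (compare the final paragraph of the proof of Theorem~\ref{thm:reduce}), so no gap remains.
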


\subsection{Associated knot traces} \label{subsec:associatedtraces}

For the rest of this section, $W_{L,n}$ and $W'_{L,n}$ will be Mazur manifolds that arise from a link $L=K \cup C$ using our construction in \S\ref{subsec:construction}. Moreover, we will take as an additional hypothesis that the exterior of $K$ in $S^3_0(C)$, denoted $E$, is not homeomorphic to the solid torus $V=S^1 \times D^2$, nor does its JSJ decomposition \cite{jaco-shalen,johannson} contain a component homeomorphic to $V_0(P)$. For example, it suffices that the image of $K$ in $S^1\times S^2 \cong S^3_0(C)$ is a hyperbolic knot whose complement has volume different from that of $V_0(P)$.

\begin{thm}\label{thm:reduce}
Under the hypotheses above, if $W_{L,n} \cong W_{L,n}'$ then $X_n(P_n(K)) \cong X_n(Q_n(K))$.
\end{thm}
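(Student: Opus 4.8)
The plan is to show that a diffeomorphism $W_{L,n} \cong W'_{L,n}$ can be ``localized'' so that it restricts to a diffeomorphism of the knot traces $X_n(P_n(K))$ and $X_n(Q_n(K))$. Recall that $W_{L,n}$ is built from a $1$-handle (the dotted circle $C$) and a $2$-handle along $P_n(K)$, while $W'_{L,n}$ uses the $2$-handle along $Q_n(K)$. The key structural observation is that both Mazur manifolds decompose as a union of the solid-torus-like piece coming from the pattern region and a complementary piece determined by the link $L$; indeed, attaching the pattern $2$-handle inside $V = S^1 \times D^2$ produces exactly the trace $X_n$ of the pattern, glued along $K \subset S^3_0(C)$. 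So the first step is to exhibit explicitly the decomposition of each $W$ into a piece $N$ (a neighborhood of the solid torus carrying the pattern, whose interior handle attachment is the knot trace) and the exterior $E$ of $K$ in $S^3_0(C)$, glued along $\partial V$.

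The heart of the argument is a rigidity statement: any diffeomorphism $\Phi\colon W_{L,n}\to W'_{L,n}$ must carry the decomposing separating hypersurface (the copy of $\partial V = T^2 \times [0,1]$ or rather the boundary of the solid-torus piece) to its counterpart, up to isotopy. This is where the topological hypotheses on $E$ are used. The condition that $E$ is not homeomorphic to $V$ and that its JSJ decomposition contains no piece homeomorphic to $V_0(P)$ is designed precisely to force the gluing torus to be characteristic/canonical, so that $\Phi$ cannot mix the ``exterior'' piece $E$ with the ``pattern'' piece across the torus. I would use the JSJ decomposition of the boundary $3$-manifold $\partial W_{L,n} = \partial W'_{L,n} = Y$ (they share a common boundary, since the patterns $P_n(K)$ and $Q_n(K)$ yield the same boundary by the Remark that $V_0(P)\cong V_0(Q)$): the incompressible JSJ tori of $Y$ are canonical up to isotopy, and the separating torus coming from $\partial V$ appears as one of them. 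The hypotheses guarantee that this torus is not accidentally isotopic to another JSJ torus and is not itself bounding a piece homeomorphic to $V_0(P)$ or to a solid torus, so it is preserved (up to isotopy and perhaps an a priori ambiguity) by any self-homeomorphism of $Y$, and in particular by $\Phi|_{\partial}$.

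With the separating torus pinned down, the plan is then to cut $\Phi$ along this hypersurface. After an isotopy making $\Phi$ respect the decomposition, the restriction of $\Phi$ to the pattern side gives a diffeomorphism between the two knot traces—more precisely, one gets a diffeomorphism of the pattern pieces rel the gluing region, and since attaching the $2$-handle along $P_n(K)$ inside this piece is literally the trace $X_n(P_n(K))$ (and similarly for $Q$), the restricted diffeomorphism yields $X_n(P_n(K))\cong X_n(Q_n(K))$, respecting orientations. I expect the main obstacle to be the rigidity step: carefully controlling how an arbitrary diffeomorphism $\Phi$ interacts with the JSJ torus, including ruling out the possibility that $\Phi$ sends the exterior piece $E$ into the pattern region (which is exactly what the hypothesis $E\not\cong V$ and the JSJ condition prevent) and handling the potential orientation/framing bookkeeping so that the induced trace diffeomorphism is compatible with the $n$-framing. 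A subtlety to watch is that one must verify the gluing identifications along $\partial V$ are the same on both sides—this should follow from the common boundary identification and the fact that the framing of the $2$-handle is dictated by the winding-number-one condition on the patterns, which makes the zero-framings agree.
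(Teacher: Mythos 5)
Your proposal contains two genuine gaps, and the second is fatal to the whole strategy. First, the claimed decomposition is wrong: the knot trace $X_n(P_n(K))$ is \emph{not} a piece of $W_{L,n}$. The pattern 2-handle is attached along $P_n(K)$ inside a neighborhood $N(K)\cong V$ of $K$ in $S^3_0(C)=\partial(S^1\times B^3)$, so the ``pattern piece'' of $W_{L,n}$ is $(V\times I)\cup_{P_n}(\text{2-handle})$, which is built on $S^1\times D^2\times I \cong S^1 \times B^3$, not on $B^4$; it is a contractible 4-manifold, whereas the trace $X_n(P_n(K))$ has $H_2\cong\mathbb{Z}$. Since $W_{L,n}$ is itself contractible, no codimension-zero piece of it can serve as the trace in the way you need. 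The relationship actually runs the other way: in the paper, $X_n(P_n(K))$ is obtained from $W_{L,n}$ by attaching an \emph{additional} 0-framed 2-handle along the meridian $\gamma$ of the dotted circle, which cancels the 1-handle.

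Second, and more fundamentally, your rigidity step has no justification. JSJ theory gives canonical tori in the \emph{3-manifolds} $Y=\partial W_{L,n}$ and $Y'=\partial W'_{L,n}$ --- and the paper uses exactly this, together with the hypotheses on $E$, to show that the boundary restriction $\phi=\Phi|_{\partial}$ can be isotoped to carry $E\subset Y$ to $E\subset Y'$. But there is no analogous canonical-decomposition theorem for separating 3-dimensional hypersurfaces inside smooth 4-manifolds: one cannot isotope an arbitrary diffeomorphism $\Phi\colon W_{L,n}\to W'_{L,n}$ to respect your proposed 4-dimensional splitting, and no tool provides this. The paper's proof is engineered around precisely this difficulty: rather than cutting $\Phi$, it \emph{extends} $\Phi$ over 2-handles attached along $\gamma\subset Y$ and $\phi(\gamma)\subset Y'$, producing a diffeomorphism from $X_n(P_n(K))$ to a manifold $X'$; the remaining work is to identify $X'\cong X_n(Q_n(K))$. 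That identification uses the boundary JSJ argument to place $\phi(\gamma)$ in $E$, the uniqueness of the nullhomologous slope $\mu\subset\partial E$, and Gluck's theorem (every self-homeomorphism of $S^1\times S^2$ preserves $S^1\times\{pt\}$ up to isotopy) to show the new 2-handle cancels the 1-handle of $X'$, so that $X'$ is a knot trace; a final satellite and framing analysis identifies the knot as $Q_n(K)$. Your instinct about what the hypotheses on $E$ are for is correct, but they are used only on the boundary 3-manifolds; the behavior of $\Phi$ on the interior is never controlled --- and cannot be.
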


\begin{proof}
Suppose there exists a diffeomorphism $\Phi:W_{L,n}\to W'_{L,n}$, and let $\phi$ denote the induced  homeomorphism between $Y=\partial W_{L,n}$ and $Y'=\partial W'_{L,n}$. Let $\gamma\subset Y$ be the knot given by the meridian of the dotted circle in our handle description of $W_{L,n}$.  We can build $X_n(P_n(K))$ by attaching a 0-framed 2-handle to $W_{L,n}$ along $\gamma$. Then $\Phi$ extends to a diffeomorphism from $X_n(P_n(K))$ to $X'$, where $X'$ is the manifold obtained by attaching  a 2-handle to $W'_{L,n}$ along the framed curve $\phi(\gamma)$, which is the framed image of the 0-framed curve $\gamma$. See Figure~\ref{fig:reduce}. In the induced handlebody diagram for $X'$, the framing on $\phi(\gamma)$ is some integer $m \in \zee$. To prove the theorem it suffices to prove $X'\cong X_n(Q_n(K))$.

\begin{figure}\center
\def\svgwidth{\linewidth}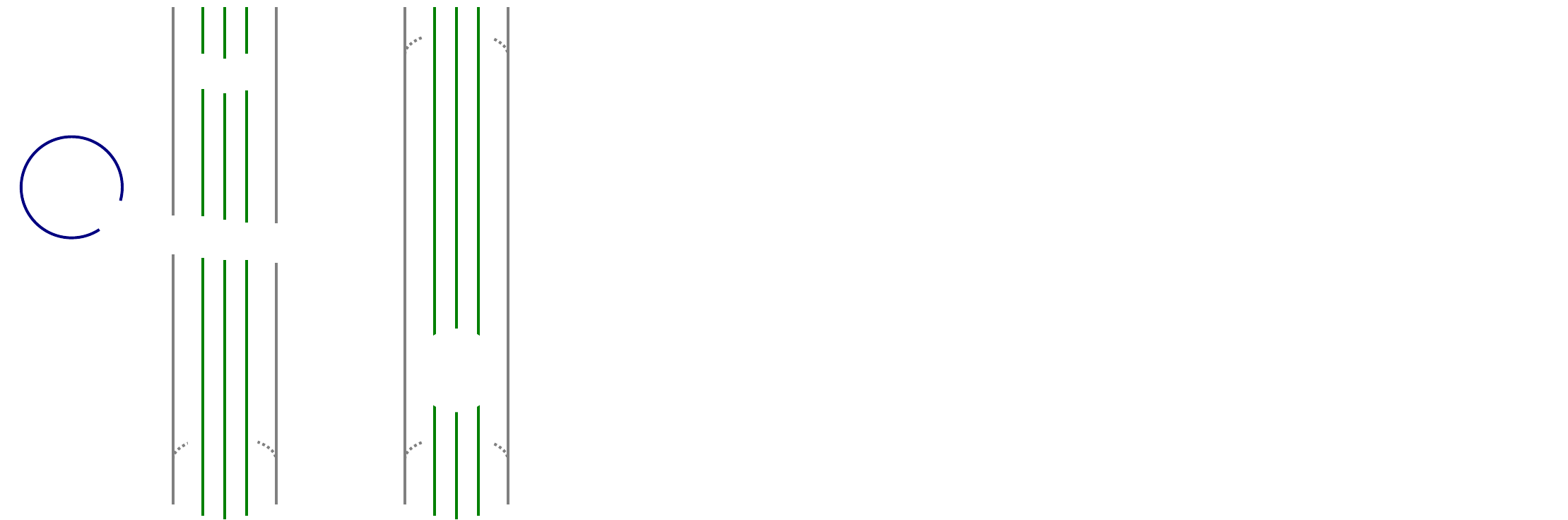 
\caption{Schematic handle diagrams for $X_n(P_n(K))$ and $X'$.}\label{fig:reduce}
\end{figure} 

To understand $X'$, we study the curve $\phi(\gamma)\subset Y'$. Observe that $Y$ decomposes as a union of $E=S^3_0(C) \setminus \mathring{N}(K)$ and $V_0(P)$ along a torus $T$, with $Y'$ similarly decomposing into $E$ and $V_0(Q)$ along a torus $T'$. As verified in Snappy and Sage \cite{snappy,sagemath}, $V_0(P) \cong V_0(Q) $ is hyperbolic; see \cite{exotic-data} for documentation.  Hence $V_0(P)$ is a nontrivial JSJ piece of $Y$ and is unique in the JSJ decomposition by hypothesis. It follows that $\phi$ can be isotoped so that $\phi(V_0(P))=V_0(Q)$, hence $\phi$ carries $E \subset Y$ to $E \subset Y'$. Since $\gamma$ lies in $E \subset Y$, it follows that $\phi(\gamma)$ lies in $E \subset Y'$.

Let $\mu\subset \partial E$ be the curve corresponding to a meridian of $K \subset S^3$. Since $\mu$ is the unique curve in $\partial E$ that is nullhomologous in $E$, we may assume $\phi|_{E}(\mu)=\mu$. By filling $\partial E$ along $\mu$, we may extend $\phi|_{E}$ to a homeomorphism $\overline{\phi}:S^1\times S^2\to S^1\times S^2$. 
The filled solid torus is naturally identified with a neighborhood of the knot $\overline{K} \subset S^3_0(C)=S^1 \times S^2$ induced by $K \subset S^3 \setminus C$, hence $\overline{\phi}$ actually gives a homeomorphism of the pair $(S^1\times S^2, \overline{K})$. Let $\overline{\gamma}\subset S^1\times S^2$ denote the image of $\gamma$ under the inclusion $E \hookrightarrow S^3_0(C) = S^1 \times S^2$. Up to isotopy, all homeomorphisms of $S^1\times S^2$ send $S^1\times \{pt\}$ to itself \cite{gluck}. Since $\overline{\gamma}$ is isotopic to $S^1\times \{pt\}$, we see that $\overline{\phi}(\overline{\gamma})=S^1\times \{pt\}$.

We claim this implies that the 2-handle of $X'$ attached along $\phi(\gamma)$ can be canceled with the 1-handle of $X'$. Diagrammatically, the isotopy from $\overline{\phi}(\overline{\gamma})$ to $S^1\times \{pt\}$ can be realized by sequence of Reidemeister moves and slides of $\overline{\phi}(\overline{\gamma})$ across the 1-handle of $X'$ that carries $\overline{\phi}(\overline{\gamma})$ to a meridian of the dotted curve $C$.  Moreover, since we may drag the neighborhood $N(\overline{K})$ through this isotopy of $\overline{\phi}(\overline{\gamma})$,  the isotopy of  $\overline{\phi}(\overline{\gamma})$ induces an isotopy of $\phi(\gamma)$ in the handlebody diagram for $X'$. In particular, it follows that the 2-handle attached along $\phi(\gamma)$ can be canceled with the 1-handle corresponding to the dotted circle $C$, thus $X'$ is the trace of some knot $J$ in $S^3$. Moreover, since $X'$ has the same intersection form as $X_n(P(K))$, we know that $X'$ is the $n$-trace of $J$.

The $n$-framed knot $J \subset S^3$ can be viewed as the image of the 0-framed knot $Q \subset V$ after embedding $V$ into $N(\overline{K}) \subset S^3_0(C)=S^1 \times S^2$ and performing surgery on the framed knot $\overline{\phi}(\overline{\gamma})\subset S^3_0(C)=S^1 \times S^2$. Since $\overline{\phi}(\overline{\gamma})$ is disjoint from the image of $V$ in $ S^3_0(C)=S^1 \times S^2$, it follows that $J$ is a satellite knot with (possibly twisted) pattern $Q$. The companion knot $K' \subset S^3$ for the satellite  $J$ can be viewed as the image of $\overline{K} \subset S^3_0(C)=S^1 \times S^2$ after surgery on $\overline{\phi}(\overline{\gamma})$. Recall from above that the map $\overline{\phi}$ induces a homeomorphism of the pair $(S^1 \times S^2, \overline{K})$. By performing surgeries on the framed knots $\overline{\gamma}$ and $\overline{\phi}(\overline{\gamma})$ in $S^1 \times S^2$, we can extend $\overline{\phi}$ to a homeomorphism between the pairs $(S^3,K)$ and $(S^3,K')$. That is, the satellite knot $J$ has companion $K$.  Since the $n$-framed pushoff of $J$ in $S^3$ must agree with the  image of the 0-framed pushoff of $Q \subset V$,  we must identify $S^1\times\{pt\}\subset \partial V$ with the $n$-framed pushoff of $K$ in $\partial N(K)$. Hence $J$ is the satellite $Q_n(K)$. 
\end{proof}

\subsection{Obstructing diffeomorphisms of knot traces and Mazur manifolds} \label{subsec:obstruction}

Here we establish Theorem \ref{thm:mazur}; the proof uses the trace invariance of $\nu$ (Theorem \ref{thm:nu}), which is proved in \S\ref{sec:HF}. The key technical result of this section, Theorem \ref{thm:PQtraces}, relies on the concordance invariants $\nu$ \cite{oz-sz:rational}, $\tau$ \cite{oz-sz:genus}, and $\epsilon$ \cite{hom:epsilon} defined via knot Floer homology. For definitions and discussion of these invariants, we recommend Hom's survey \cite{hom:survey}; here we simply collect the properties used in the proof of Theorem~\ref{thm:PQtraces}. The invariants $\tau$ and $\nu$ are integer-valued, and  $\epsilon$ takes values in the set $\{-1,0,1\}$. They are related as follows:
\begin{itemize}
\item If $\nu(K)=\tau(K)+1$ and $\nu(-K) = \tau(-K)$, then $\epsilon(K)=-1$.
\item If $\nu(K)=\tau(K)$ and $\nu(-K)=\tau(-K)$, then $\epsilon(K)=0$.
\item If $\nu(K) = \tau(K)$ and $\nu(-K)=\tau(-K)+1$, then $\epsilon(K)=1$.
\end{itemize}
These cases are exhaustive, and when $\epsilon(K)=0$, we have $\tau(K)=\nu(K)=\nu(-K)=0$. In \cite{levine:PL}, Levine gives a formula for $\tau(P(K))$ and $\epsilon(P(K))$ in terms of $\tau(K)$ and $\epsilon(K)$:

\begin{thm}[\cite{levine:PL}]\label{thm:levine}
For any knot $K \subset S^3$, 
\begin{equation*} 
\tau(P(K))=\begin{cases}  
\tau(K)+1 & \text{if $\tau(K)>0$ or $\epsilon(K)=-1$} \\
\tau(K) & \text{if $\tau(K) \leq 0$ and $\epsilon(K) \in \{0,1\}$}
\end{cases}
\end{equation*}
and
\begin{equation*}
\epsilon(P(K)) = \begin{cases} 0 & \text{if $\tau(K)=\epsilon(K)=0$} \\ 1 & \text{otherwise.} \end{cases}
\end{equation*}
\end{thm}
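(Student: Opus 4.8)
The plan is to compute the full knot Floer complex $CFK^{\infty}(P(K))$ using bordered Heegaard Floer homology and then read both invariants directly off its filtered structure. Since $P$ is a winding-number-one pattern, I would first split the complement $S^3 \setminus N(P(K))$ along the boundary torus of $N(K)$ into the pattern piece $V \setminus N(P)$ and the companion piece $S^3 \setminus N(K)$, each regarded as a bordered three-manifold with its canonical parametrization. The Lipshitz--Ozsv\'ath--Thurston pairing theorem then identifies $\widehat{CFK}(P(K))$, and with the appropriate enhancement the full $CFK^{\infty}(P(K))$, with the box tensor product of the type-$A$ module of the pattern piece and the type-$D$ module of the companion piece.

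First I would record $\widehat{CFA}(V \setminus N(P))$ together with its Alexander filtration; this is a fixed, $K$-independent computation carried out once from a bordered Heegaard diagram for the Mazur pattern. Next I would invoke the Lipshitz--Ozsv\'ath--Thurston algorithm expressing $\widehat{CFD}(S^3 \setminus N(K))$ in terms of a reduced, vertically and horizontally simplified basis for $CFK^{\infty}(K)$. In that description the module splits into \emph{vertical} and \emph{horizontal} chains coming from the differential of $CFK^{\infty}(K)$, together with a single \emph{unstable} chain whose length is governed by the framing and by $\tau(K)$ and whose attachment to the rest of the module is dictated precisely by $\epsilon(K)$. This is the step where the dependence of the answer on the pair $(\tau(K),\epsilon(K))$ enters.

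With both sides in hand I would form the box tensor product and simplify the resulting filtered complex by cancelling differentials, tracking the Alexander and algebraic filtration levels carefully. From the homology of the associated graded I would extract $\tau(P(K))$ as the filtration level supporting the surviving generator of $\widehat{HF}(S^3)$, and I would extract $\epsilon(P(K))$ from Hom's criterion, namely by checking whether the distinguished vertical generator is the tail, the head, or neither of a horizontal arrow in the simplified complex. I would then organize the argument into the cases $\tau(K)>0$; $\tau(K)<0$; $\tau(K)=0$ with $\epsilon(K)=\pm 1$; and $\tau(K)=\epsilon(K)=0$, verifying the stated output in each.

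The main obstacle is the bookkeeping in the box tensor product: the unstable chain interacts with the pattern's type-$A$ operations in a way that must be controlled across all framings simultaneously, and the filtration shifts are easy to misplace. The most delicate case is $\tau(K)=0$, where $\tau$ alone cannot separate $\epsilon(K)=0$ from $\epsilon(K)=\pm 1$; here the refined data carried by $\epsilon(K)$ — equivalently, the presence or absence of a length-one arrow through the generator of the vertical homology — is exactly what forces $\tau(P(K))=0$ and $\epsilon(P(K))=0$ when $\epsilon(K)=0$, while raising $\tau(P(K))$ to $\tau(K)+1$ when $\epsilon(K)=-1$. Getting this boundary case right, rather than the generic $\tau(K)\neq 0$ cases, is where I expect the real work to lie.
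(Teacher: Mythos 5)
This theorem is not proved in the paper at all; it is quoted from Levine \cite{levine:PL}, so the only proof to compare against is Levine's own. Your outline --- split the satellite exterior along the companion torus, compute the bordered invariant of the Mazur-pattern piece once and for all, feed in $\widehat{CFD}(S^3\setminus N(K))$ via the Lipshitz--Ozsv\'ath--Thurston algorithm (vertical and horizontal chains plus an unstable chain governed by $\tau(K)$, with $\epsilon(K)$ controlling how the distinguished generators attach), take the box tensor product, and run the case analysis on $(\tau(K),\epsilon(K))$ --- is precisely the argument Levine gives there. The one wrinkle is your claim to recover the full $CFK^{\infty}(P(K))$ from hat-flavored bordered data: that enhancement is not available off the shelf, and Levine sidesteps it by reading $\tau$ and $\epsilon$ of the satellite directly from $\widehat{CFD}$ of its exterior using Hom's characterization of these invariants \cite{hom:epsilon}, which is the cleaner way to finish the computation you describe.
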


This determines an analogous formula for $\nu$, whose proof is left to the reader.

\begin{cor}\label{cor:nu-formula}
For any knot $K \subset S^3$,
\begin{equation*}
\nu(P(K))=\begin{cases} \nu(K)+1 & \text{if $\nu(K)=\tau(K)>0$} \\
\nu(K) & \text{otherwise.} \end{cases}
\end{equation*}
\end{cor}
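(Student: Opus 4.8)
The plan is to reduce the computation of $\nu(P(K))$ to Levine's formula for $\tau(P(K))$ by first observing that $\nu$ and $\tau$ agree on every Mazur satellite. The starting point is that the three bulleted relations among $\nu$, $\tau$, and $\epsilon$ are exhaustive, and together they show that for any knot $J$ one has $\nu(J)=\tau(J)+1$ precisely when $\epsilon(J)=-1$, and $\nu(J)=\tau(J)$ otherwise (that is, whenever $\epsilon(J)\in\{0,1\}$). Applying this dictionary to $J=P(K)$ and invoking the second formula of Theorem~\ref{thm:levine}, which guarantees $\epsilon(P(K))\in\{0,1\}$ for every $K$, I would conclude immediately that $\nu(P(K))=\tau(P(K))$. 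This identity is the crux of the argument: it trades the computation of $\nu(P(K))$ for the computation of $\tau(P(K))$ that Levine already supplies.

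With this in hand, the remainder is a bookkeeping exercise translating Levine's cases for $\tau(P(K))$ into the stated cases for $\nu(P(K))$, using the equivalences $\epsilon(K)=-1 \Leftrightarrow \nu(K)=\tau(K)+1$ and $\epsilon(K)\in\{0,1\} \Leftrightarrow \nu(K)=\tau(K)$ to rewrite the hypotheses. Concretely, I would split on whether $\nu(K)=\tau(K)$. If $\nu(K)=\tau(K)>0$, then $\epsilon(K)\in\{0,1\}$ and $\tau(K)>0$, so Levine's first case applies and gives $\tau(P(K))=\tau(K)+1$; hence $\nu(P(K))=\nu(K)+1$, which is the exceptional clause of the corollary. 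Otherwise, either $\epsilon(K)=-1$ (equivalently $\nu(K)=\tau(K)+1$), which again falls into Levine's first case and yields $\tau(P(K))=\tau(K)+1=\nu(K)$, or else $\nu(K)=\tau(K)\leq 0$ with $\epsilon(K)\in\{0,1\}$, which is Levine's second case and gives $\tau(P(K))=\tau(K)=\nu(K)$. In both of these subcases $\nu(P(K))=\nu(K)$, matching the ``otherwise'' clause.

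The argument is entirely formal once the identity $\nu(P(K))=\tau(P(K))$ is established, so I do not expect a genuine obstacle. The only point requiring care is tracking the equivalences between $\epsilon(K)$ and the gap $\nu(K)-\tau(K)$ correctly in each branch, and in particular noting that the case $\tau(K)>0$ in Levine's formula subsumes both $\epsilon(K)\in\{0,1\}$ and $\epsilon(K)=-1$, so that the single condition $\nu(K)=\tau(K)>0$ is exactly what isolates the $+1$ behavior.
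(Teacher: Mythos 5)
Your proof is correct and is exactly the argument the paper intends: the paper leaves this corollary to the reader, expecting precisely your reduction, namely that $\epsilon(P(K))\in\{0,1\}$ (Theorem~\ref{thm:levine}) forces $\nu(P(K))=\tau(P(K))$ via the bulleted $\nu$--$\tau$--$\epsilon$ dictionary, after which Levine's cases for $\tau(P(K))$ translate directly into the stated cases for $\nu(P(K))$.
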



We will also use the following twist inequality for $\nu$, whose proof is given in \S\ref{sec:HF}.

\begin{prop}\label{prop:nutwist}
Suppose $K$ and $J$ are knots in $S^3$ and $J$ is obtained from $K$ by adding a positive full twist about algebraically 1 strands. Then $\nu(J)\le\nu(K)$. 
\end{prop}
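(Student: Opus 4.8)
The statement to prove is Proposition~\ref{prop:nutwist}: if $J$ is obtained from $K$ by adding a positive full twist about algebraically $1$ strands, then $\nu(J) \le \nu(K)$.

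The plan is to realize the twist relationship cobordism-theoretically and then exploit the behavior of $\nu$ under the associated surgery/cobordism maps. First I would set up the local picture: adding a positive full twist about a collection of strands with algebraic intersection $1$ with the twisting disk corresponds to a $(-1)$-surgery on an unknot $c$ encircling those strands (the sign convention being chosen so that the twist is positive). Because the algebraic winding of $K$ through the disk bounded by $c$ is $1$, the curve $c$ is homologically trivial in the complement of $K$ and, crucially, $K$ and $c$ form a link in which $c$ is an unknot linking $K$ algebraically once. The surgery turns $K$ into $J$ inside the new copy of $S^3$.

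The key step is to interpret $\nu$ via the surgery formula / the large-$N$ behavior of the maps on $\HFp$ (or $\HFmc$) induced by the $2$-handle cobordism coming from this $(-1)$-surgery. Recall that $\nu(K)$ is defined through the vanishing/non-vanishing of the map $\widehat{v}_s$ (equivalently via the rational surgery formula of \cite{oz-sz:rational}), detecting the smallest $s$ for which a certain map in the mapping cone is surjective. I would compare $\nu(K)$ and $\nu(J)$ by noting that the two knots cobound a surface in the trace of the $(-1)$-surgery, and that the sign of the twist controls the direction of the induced inequality. Concretely, one shows that a positive full twist can only decrease $\nu$ because the corresponding cobordism map respects the relevant filtration in a way that is monotone; this is the same mechanism underlying the crossing-change and unknotting-type inequalities for $\tau$ and $\nu$. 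An efficient route is to use that $\nu(K) \in \{\tau(K), \tau(K)+1\}$ together with the known twist (crossing-change) inequalities for $\tau$, and then upgrade from $\tau$ to $\nu$ using the characterization of $\nu$ in terms of $\tau(K)$, $\tau(-K)$, and $\epsilon$ recorded just before Corollary~\ref{cor:nu-formula}, being careful about the boundary case where $\nu = \tau + 1$.

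I expect the main obstacle to be the bookkeeping in the boundary cases, precisely where $\nu$ and $\tau$ disagree. The crossing-change inequality for $\tau$ gives $\tau(J) \le \tau(K) \le \tau(J)+1$, but since $\nu$ can exceed $\tau$ by one, a naive passage from the $\tau$-inequality to the $\nu$-inequality could lose control exactly when the twist changes $\epsilon$ or pushes $\tau$ across the threshold in Corollary~\ref{cor:nu-formula}. To handle this cleanly I would work directly with the large-surgery mapping cone description of $\nu$ rather than routing through $\tau$: one identifies $\nu(K) \le s$ with surjectivity of the map $\widehat{v}_s \colon \ahat_s \to \cfhat(S^3)$, and then argues that the $2$-handle attachment realizing the positive twist provides a chain map commuting with these $\widehat{v}_s$ maps that can only enlarge the image, forcing the threshold $s$ to be non-increasing. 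The technical heart is verifying that the positivity of the twist (as opposed to a negative twist) is exactly what makes this induced map point in the surjectivity-preserving direction; this is a naturality-of-surgery-maps computation, and getting the sign right is where the real care is required.
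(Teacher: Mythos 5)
Your proposal does not contain a proof; its self-described ``technical heart'' is exactly the step you leave unverified. You correctly diagnose that routing through $\tau$ via the crossing-change inequality loses control precisely when $\nu(K)=\tau(K)+1$ (i.e.\ when $\epsilon(K)=-1$), but your proposed repair --- a chain map induced by the twisting $2$-handle that ``commutes with the $\widehat{v}_s$ maps and can only enlarge the image'' --- is never formulated precisely, let alone established. No such map between $\ahat_s(K)$ and $\ahat_s(J)$ exists off the shelf: what the surgery formula actually provides is that $v_s$ and $h_s$ are cobordism maps out of large surgeries, so the honest version of your argument would factor the $v_s$-cobordism for $K$ through the cobordism $S^3_N(K)\to S^3_{N+1}(J)$ given by the $(-1)$-framed $2$-handle along the twisting circle (note the framing shift: a positive full twist about algebraically $1$ strands raises the surgery coefficient by $1$), then invoke the composition law and the blow-up formula, and finally carry out the \spinc bookkeeping needed to identify which map $v_{s'}$ or $h_{s'}$ for $J$ is forced to be nonzero and for which $s'$. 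None of this is done, and the grading/sign bookkeeping you defer is exactly where such arguments tend to break; in particular, you risk concluding only that some $h_{s'}$ is nonzero, which by itself says nothing about $\nu(J)$ without an additional symmetry argument.

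The paper's proof avoids this machinery entirely and handles the problematic case by a topological trick your sketch does not anticipate. It first upgrades the Ozsv\'ath--Szab\'o adjunction inequality for $\tau$ (Theorem~\ref{taunegdef}) to one for $\nu$ (Theorem~\ref{thm:nunegdef}): when $\epsilon(K)=-1$, a disk bounded by $K$ in a negative-definite $W$ is traded for a disk bounded by the Mazur satellite $P_n(K)$ (possible because $P(U)$ is slice, so the pattern can be inserted along the punctured disk and capped off), and Levine's formula $\tau(P(K))=\tau(K)+1=\nu(K)$ (Theorem~\ref{thm:levine}) then converts the $\tau$-inequality for $P_n(K)$ into the desired $\nu$-inequality for $K$. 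Once that theorem is available, the proposition is formal: blowing up exhibits $J\#-K$ as the boundary of a disk $D\subset B^4\#\cpbar$ with $|[D]|=1$ and $[D]\cdot[D]=-1$, whence $\nu(J\#-K)\le 0$, and subadditivity plus concordance invariance give $\nu(J)\le\nu(J\#-K)+\nu(K)\le\nu(K)$. In short, where you propose to prove new Floer-theoretic naturality statements, the paper trades the discrepancy $\nu-\tau$ for a genuine increase of $\tau$ under the Mazur satellite and then quotes existing results; supplying the missing analysis in your approach would be a substantial project, not a routine verification.
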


Here ``twisting about algebraically 1 strands'' means that we find an oriented disk $D$ embedded in $S^3$ that intersects $K$ transversely in algebraically $\pm 1$ point, then we replace the trivial braid $K\cap (D\times [-\varepsilon,\varepsilon])$ by a full positive twist.

\begin{thm}\label{thm:PQtraces}
For any knot $K \subset S^3$ with $\nu(K)=\tau(K)>0$ and any integer $n\leq 0$, the knot traces $X_n(P_n(K))$ and $X_n(Q_n(K))$ are homeomorphic but not diffeomorphic.
\end{thm}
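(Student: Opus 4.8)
The plan is to treat the two assertions separately: the homeomorphism comes for free from the cork-twist argument already established, while the failure of a diffeomorphism will be detected by the knot-trace invariant $\nu$.

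For the homeomorphism, I would apply Proposition~\ref{prop:homeo} with $Z = B^4$, viewing $K$ as an $n$-framed knot in $S^3 = \partial B^4$. Since the patterns $P$ and $Q$ each have winding number one, embedding the solid torus $V$ along the $n$-framing of $K$ converts the preferred pattern framing (homologous to $S^1\times\{pt\}$) into the $n$-framing in $S^3$, via the satellite framing identity $f + w^2 c = 0 + 1\cdot n = n$. Thus $Z_P$ is precisely $X_n(P_n(K))$ and $Z_Q$ is precisely $X_n(Q_n(K))$, and Proposition~\ref{prop:homeo} exhibits them as homeomorphic.

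For the failure of diffeomorphism I would argue by contraposition using Theorem~\ref{thm:nu}: it suffices to show that $\nu(P_n(K))\ne\nu(Q_n(K))$ and that their values do not land in the exceptional configuration. First, the remark following Figure~\ref{fig:patterns} records that $Q_n$ is concordant to the core $S^1\times\{pt\}$ inside $S^1\times D^2\times[0,1]$; this pattern concordance induces a concordance from $Q_n(K)$ to $K$ in $S^3$, so concordance invariance of $\nu$ gives $\nu(Q_n(K))=\nu(K)$. Next, for $n=0$ the satellite formula Corollary~\ref{cor:nu-formula}, together with the hypothesis $\nu(K)=\tau(K)>0$, yields $\nu(P(K))=\nu(K)+1$. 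For general $n\le 0$ I would propagate this using the twist inequality Proposition~\ref{prop:nutwist}: increasing $n$ by one adds a single positive full twist on the winding-number-one (hence algebraically one) strand of the pattern, so $\nu(P_{n+1}(K))\le\nu(P_n(K))$; iterating from $n$ up to $0$ gives $\nu(P_n(K))\ge\nu(P_0(K))=\nu(K)+1$ for every $n\le 0$. Since $\nu(K)=\tau(K)>0$ is a positive integer, we conclude $\nu(Q_n(K))=\nu(K)\ge 1$ while $\nu(P_n(K))\ge\nu(K)+1\ge 2$; in particular the two values differ, and because $\nu(P_n(K))\ge 2$ the unordered pair $\{\nu(P_n(K)),\nu(Q_n(K))\}$ cannot equal $\{0,1\}$. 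The exceptional case of Theorem~\ref{thm:nu} therefore does not occur, so the traces are not diffeomorphic.

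The delicate point is establishing the lower bound on $\nu(P_n(K))$ uniformly in $n\le 0$. This depends on getting the direction of the twist inequality correct—positive twisting \emph{decreases} $\nu$, so passing to $n\le 0$ can only raise the value above its value at $n=0$—and on using Corollary~\ref{cor:nu-formula} to seed the estimate at $n=0$. The fact that this lower bound is at least $2$ is exactly what excludes the exceptional $\{0,1\}$ case, which would otherwise leave a genuine gap in the argument when $n<0$; conversely, this is precisely why the hypothesis $\nu(K)=\tau(K)>0$ (rather than merely $\nu(K)\neq 0$) is needed.
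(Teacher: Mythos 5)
Your proposal is correct and follows essentially the same route as the paper's proof: homeomorphism via Proposition~\ref{prop:homeo}, $\nu(Q_n(K))=\nu(K)$ by concordance, $\nu(P_n(K))\ge\nu(P(K))=\nu(K)+1$ by the twist inequality (Proposition~\ref{prop:nutwist}, with the direction handled correctly for $n\le 0$) and Corollary~\ref{cor:nu-formula}, and then Theorem~\ref{thm:nu} with the observation that $\nu(P_n(K))\ge 2$ rules out the exceptional $\{0,1\}$ configuration. The only difference is expository: you spell out the iteration of the twist inequality and the framing bookkeeping for $Z=B^4$, which the paper leaves implicit.
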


\begin{proof}
That $X_n(P_n(K))$ is homeomorphic to $X_n(Q_n(K))$ follows from Proposition~\ref{prop:homeo}. Since $Q_n(K)$ is concordant to $K$, we have $\nu(Q_n(K))=\nu(K)$. On the other hand, we have $\nu(P_n(K)) \geq \nu(P(K))$ by Proposition~\ref{prop:nutwist} and the hypothesis $n \leq 0$, and $\nu(P(K))=\nu(K)+1$ by Corollary~\ref{cor:nu-formula}. Since $\nu(P_n(K))\neq \nu(Q_n(K))$ and $\nu(P_n(K)) > 1$, Theorem~\ref{thm:nu} implies that the traces $X_n(P_n(K))$ and $X_n(Q_n(K))$ are not diffeomorphic.
\end{proof}

These ingredients can now be assembled to   produce exotic pairs of Mazur manifolds. 

\begin{proof}[Proof of Theorem~\ref{thm:mazur}]  
We begin by constructing an infinite family of links $L_m = K_m \cup C$ satisfying the hypotheses preceding Theorem \ref{thm:reduce}, with the additional property that $\nu(K_m) = \tau(K_m)>0$. Let $K_1 \cup C=K \cup C$ be the link depicted in Figure~\ref{fig:tref-phone}. For $m>0$, define $L_m=K_m \cup C$ so that $K_m$ is obtained from $K$ by adding $(m-1)$ positive full twists to the strands passing through the curve $\alpha$. (Note that this is equivalent to viewing $K \cup C$ in the modified surgery diagram for $S^3$ given by $-1/(m-1)$-surgery on $\alpha$.) The knot $K_m$ is the torus knot $T_{2,2m+1}$, so $\nu(K_m)=\tau(K_m)=m>0$ \cite{oz-sz:genus}. 

It remains to check that $L_m=K_m \cup C$ satisfies the hypotheses preceding Theorem~\ref{thm:reduce}. We begin with $K_1 \cup C$: Using SnapPy and Sage \cite{snappy,sagemath}, we may verify that $S^3_0(C) \setminus K_1$ is hyperbolic with $\operatorname{vol}( S^3_0(C) \setminus K_1) \approx 9$. It follows that $S^3_0(C) \setminus K_1$ is not diffeomorphic to the solid torus $V$. And while $V_0(P)$ is hyperbolic, it has $\operatorname{vol}(V_0(P))<4$, so $S^3_0(C) \setminus K_1$ is not diffeomorphic to $V_0(P)$. (See \cite{exotic-data} for documentation regarding these calculations.)

To address $K_m$ with $m>1$, consider the link $K \cup C\cup \alpha$. Using SnapPy and Sage, we may verify that $S^3_0(C) \setminus( K \cup \alpha)$ is hyperbolic with $\operatorname{vol}(S^3_0(C) \setminus( K \cup \alpha)) \approx 11$. To obtain $S^3_0(C) \setminus K_m$, we perform $-1/(m-1)$-surgery on $\alpha$ in $S^3_0(C) \setminus K$. By Thurston's hyperbolic Dehn surgery theorem \cite{thurston:notes} and \cite[Theorem~1A]{neumann-zagier}, there exists $N>0$ such that for $m>N$, the result of $-1/(m-1)$-surgery on $\alpha \subset S^3_0(C) \setminus K$ is hyperbolic and its volume increases monotonically with $m$, converging to $\operatorname{vol}(S^3_0(C) \setminus( K \cup \alpha)) \approx 11$. Since the result of $-1/(m-1)$-surgery on $\alpha \subset S^3_0(C) \setminus K$ is diffeomorphic to $S^3_0(C) \setminus K_m$, we  conclude as above that $S^3_0(C) \setminus K_m$ is not diffeomorphic to $V$ or $V_0(P)$ for $m > N$.

This shows that all but finitely many of the links $L_m=K_m \cup C$ satisfy both the hypotheses preceding Theorem~\ref{thm:reduce} and $\nu(K_m)=\tau(K_m)=m>0$. Applying Theorem~\ref{thm:reduce} and Theorem~\ref{thm:PQtraces}, we conclude that for all $n\le 0$ the corresponding family of Mazur manifolds $W_{L_m,n}$ and $W_{L_m,n}'$ contains infinitely many distinct exotic pairs.
\end{proof}

\section{Property R and integer homology spheres}\label{subsec:propR}

We now deduce Corollary~\ref{cor:propR} from Theorem~\ref{thm:mazur}, aided by a simple lemma.

\begin{lem}\label{lem:duals}
Suppose $(K,\lambda_K)$ and $(K',\lambda_{K'})$ are framed knots in a 3-manifold $Y$ with $M = Y_{\lambda_K}(K)=Y_{\lambda_{K'}}(K')$. Let $k, k' \subset M$ denote the surgery duals to $K,K' \subset Y$, and let $\lambda_k, \lambda_{k'}$ the dual framings of $k,k' $ so that $Y= M_{\lambda_k}(k) =M_{\lambda_{k'}}(k')$. If there is a diffeomorphism of pairs $(Y,K) \cong (Y,K')$ carrying $\lambda_K$ to $\lambda_{K'}$, then there is a diffeomorphism of pairs $(M,k) \cong (M, k')$ carrying $ \lambda_k$ to $ \lambda_{k'}$.
\end{lem}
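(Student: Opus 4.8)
The plan is to reduce everything to the common knot exterior and exploit the standard meridian--framing swap that underlies Dehn surgery duality. Write $E = Y \setminus \mathring{N}(K)$ and $E' = Y \setminus \mathring{N}(K')$ for the exteriors of $K$ and $K'$, and let $\mu_K, \mu_{K'}$ denote the meridians on $\partial E, \partial E'$. The hypothesized diffeomorphism of pairs $(Y,K) \cong (Y,K')$ restricts to a diffeomorphism $f \colon E \to E'$ of exteriors; since a diffeomorphism of pairs carries a tubular neighborhood of $K$ to one of $K'$, the induced map on boundary tori sends $\mu_K$ to $\mu_{K'}$, and by hypothesis it sends the framing curve $\lambda_K$ to $\lambda_{K'}$.

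The key observation is how the surgery-dual data sits on these same boundary tori. Forming $M = Y_{\lambda_K}(K)$ glues a solid torus $N(k)$ to $E$ so that its meridian disk is bounded by $\lambda_K$; thus the exterior of the dual knot $k \subset M$ is exactly $E$, the meridian $\mu_k$ of $k$ is the curve $\lambda_K$, and the dual framing $\lambda_k$ --- characterized by the requirement $M_{\lambda_k}(k) = Y$ --- is precisely the original meridian $\mu_K$, since regluing a solid torus along $\mu_K$ recovers $Y$. The same holds for the primed data: $\mu_{k'} = \lambda_{K'}$ and $\lambda_{k'} = \mu_{K'}$ on $\partial E'$. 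In other words, passing to surgery duals interchanges the roles of meridian and framing.

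It remains to extend $f$ over the surgery solid tori. Because $f$ carries $\lambda_K = \mu_k$ to $\lambda_{K'} = \mu_{k'}$, the induced boundary map sends the meridian of $N(k)$ to the meridian of $N(k')$; a diffeomorphism of a solid torus boundary taking meridian to meridian extends over the solid torus, and we may arrange it to carry the core $k$ to $k'$. This produces a diffeomorphism $\widetilde{f} \colon M \to M$ with $\widetilde{f}(k) = k'$, i.e.\ a diffeomorphism of pairs $(M,k) \cong (M,k')$. Finally, $\widetilde{f}$ carries the dual framing correctly: $\widetilde{f}(\lambda_k) = \widetilde{f}(\mu_K) = \mu_{K'} = \lambda_{k'}$, which is exactly the conclusion.

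The argument is essentially bookkeeping, so I expect no serious obstacle; the one point requiring care is tracking which slope on the shared boundary torus plays each role --- i.e.\ verifying the swap $\mu_k = \lambda_K$ and $\lambda_k = \mu_K$ --- and confirming that $f$'s agreement on \emph{both} the meridian and the framing is exactly what is needed to extend over the glued-in solid tori while preserving the dual framings.
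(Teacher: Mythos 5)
Your proof is correct and follows essentially the same route as the paper's: identify the dual knot exteriors with the original ones, observe the meridian--framing swap ($\mu_k = \lambda_K$, $\lambda_k = \mu_K$ and likewise for the primed data), restrict the given diffeomorphism to the exteriors, and extend over the filling solid tori using that meridians go to meridians. No gaps; the bookkeeping of slopes is exactly the content of the paper's argument as well.
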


\begin{proof}
For any knot $J$, we let $\mu_J$ denote the meridian to $J$ and let $\nd(J)$ denote an open tubular neighborhood of $J$. Note that, under the natural identifications $Y \setminus \nd(K) \to M \setminus \nd(k)$ and $Y \setminus \nd(K') \to M \setminus \nd(k')$, we have $\mu_K \mapsto  \lambda_k$, $\mu_{K'} \mapsto \lambda_{k'}$, $\lambda_K \mapsto  \mu_k$, and $\lambda_{K'} \mapsto  \mu_{k'}$. The diffeomorphism $\varphi$ from $(Y,K)$ to $(Y,K')$ induces a diffeomorphism from $Y \setminus \nd(K)$ to $Y \setminus  \nd(K')$ carrying $\lambda_K$ to $\lambda_{K'}$ and $\mu_K$ to $\mu_{K'}$. This induces a diffeomorphism from $M \setminus \nd(k)$ to $M \setminus \nd(k')$ carrying $ \mu_k$ to $\mu_{k'}$ and $\lambda_k$ to $\lambda_{k'}$. By extending this over the Dehn fillings along $\mu_k$ and $\mu_{k'}$, we obtain a diffeomorphism from $M$ to itself carrying $k$ to $k'$ and $\lambda_k$ to $\lambda_{k'}$.
\end{proof}

\begin{proof}[Proof of Corollary~\ref{cor:propR}]
Let $Y$ bound a pair of exotic Mazur manifolds $W$ and $W'$ each obtained from attaching a 2-handle along knot $k$ (resp $k'$) in $S^1 \times S^2$ which generates $H_1(S^1\times S^2)$ with framing $\lambda_k$ (resp $\lambda_{k
'}$). Let $K, K' \subset Y$ denote the surgery duals to $k, k' \subset S^1 \times S^2$, and let $\lambda_K$ and $\lambda_{K'}$ be the dual surgery framings of $K$ and $K'$. Observe that $Y_{\lambda_K}(K)$ and $Y_{\lambda_{K'}}(K')$ are diffeomorphic to $S^1 \times S^2$.

For the sake of contradiction, suppose there exists a diffeomorphism of $Y$ carrying $(K,\lambda_K)$ to $(K',\lambda_{K'})$. By Lemma~\ref{lem:duals}, this induces a diffeomorphism of $S^1 \times S^2$ carrying $(k, \lambda_k)$ to $(k', \lambda_{k'})$. Any diffeomorphism of $S^1 \times S^2$ extends over $S^1 \times B^3$ \cite{laudenbach-poenaru}, so we obtain a diffeomorphism of $S^1 \times B^3$ carrying $(k ,  \lambda_k)$ to $(k', \lambda_{k'})$. This diffeomorphism then extends to the Mazur manifolds $W$ and $W'$ obtained by attaching 2-handles to $S^1 \times B^3$ along $(k , \lambda_k)$ and $(k', \lambda_{k'})$, respectively, yielding a contradiction. \end{proof}

Two related questions naturally arise: Which integer homology spheres admit an $S^1\times S^2$ surgery? And are there integer homology spheres other than $S^3$ that admit a unique $S^1\times S^2$ surgery? As a first step we answer these questions for L-spaces. Recall that a rational homology 3-sphere $Y$ is called an L-space if, for every $\spinc$ structure $\s$ on $Y$, $\widehat{HF}(Y,\s)$ has rank one. 

It has been conjectured that the only integral homology sphere L-spaces are $S^3$ and the Poincar\'e homology sphere; see \cite{hedden-levine}. The following quick verification of this conjecture among homology spheres that bound Mazur manifolds also answers our questions for L-spaces, and has been observed independently by Conway and Tosun \cite{conway-tosun:mazur}.

\begin{prop}
If the integer homology sphere $Y$ is an $L$-space and bounds a Mazur manifold $W$, then the $S^1\times S^2$ surgery on $Y$ is unique and in fact $Y=S^3$ and $W=B^4$.
\end{prop}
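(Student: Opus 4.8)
The plan is to combine the fact that a contractible filling forces the Heegaard Floer correction term to vanish with the genus-detection properties of knot Floer homology, and then to invoke Gabai's Property R to finish. Write $K\subset Y$ for the belt sphere of the $2$-handle of $W$; since $Y$ is an integer homology sphere, $K$ has a well-defined Seifert framing, and undoing the $2$-handle exhibits $Y_0(K)\cong S^1\times S^2$. First I would observe that $W$ is contractible, hence an integral homology $4$-ball, so $d(Y)=0$; together with the $L$-space hypothesis this gives $\HFp(Y)\cong\mathcal{T}^+_0\cong\HFp(S^3)$ and $\HFp_{red}(Y)=0$. The entire content is then to promote this Floer-theoretic coincidence with $S^3$ to an honest diffeomorphism, and it is precisely here that the $S^1\times S^2$ surgery must enter: without it the conclusion is false, as the rest of the paper demonstrates.

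The key is to bound the Seifert genus $g:=g(K)$, computed in $Y$. As a first, $L$-space--free reduction, I would use that knot Floer homology detects this genus (Ozsv\'ath--Szab\'o for $S^3$, Ni in general) and that the top group controls $\HFp(Y_0(K))$ in the extremal \spinc structure: if $g\ge 2$, then $\HFp(Y_0(K),\mathfrak{s}_{g-1})\neq 0$ for the \spinc structure $\mathfrak{s}_{g-1}$ with $c_1$ evaluating as $2(g-1)\ge 2$ on the capped Seifert surface, i.e.\ in a \emph{non-torsion} \spinc structure. This contradicts $\HFp(S^1\times S^2,\mathfrak{s})=0$ for all non-torsion $\mathfrak{s}$ (the homologically essential sphere in $S^1\times S^2$ kills $\HFp$ outside the torsion \spinc structure via the adjunction inequality), so $g\le 1$. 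To exclude $g=1$ I would invoke the $L$-space hypothesis: since $\HFp(Y)\cong\mathcal{T}^+_0$ has no reduced summand, the mapping-cone formula for the $0$-surgery rewrites the vanishing $\HFp_{red}(Y_0(K))=\HFp_{red}(S^1\times S^2)=0$ (in every \spinc structure) as a vanishing of the reduced knot Floer homology of $K$, whence genus detection forces $g=0$, contradicting $g=1$. (Concretely, a \emph{fibered} genus-one knot would already make $Y_0(K)$ a torus bundle, which is never $S^1\times S^2$.) Thus $g=0$, so $K$ bounds an embedded disk in $Y$; then $Y_0(K)\cong Y\#(S^1\times S^2)$, and comparing with $Y_0(K)\cong S^1\times S^2$, uniqueness of prime decomposition forces $Y\cong S^3$.

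Finally, with $Y=S^3$ the belt sphere $K$ admits an $S^1\times S^2$ surgery, so by Gabai's Property R \cite{gabai:foliations} it is the unknot. This simultaneously shows that the surgery is unique up to ambient diffeomorphism and that the dual $2$-handle cancels the $1$-handle, so $W\cong B^4$. I expect the genuine obstacle to be the elimination of the $g=1$ case, as this is exactly where the $L$-space hypothesis is indispensable (the non--$L$-space examples constructed in this paper realize belt spheres that the earlier steps do \emph{not} exclude); making the mapping-cone argument precise, or equivalently ruling out a rank-$\ge 2$ top knot Floer group through the structure of $\HFp(Y_0(K))$, is the delicate point.
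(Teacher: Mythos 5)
Your strategy is genuinely different from the paper's: you work with the belt sphere $K\subset Y$ and try to bound its Seifert genus using $\HFp$ of the zero-surgery, finishing with Gabai's Property R, whereas the paper works on the other side of the surgery, viewing $Y$ as Dehn surgery on the attaching circle $k\subset S^1\times S^2$ and applying Ni--Vafaee (an L-space surgery forces the complement of $k$ to be fibered) followed by Baker--Buck--Lecuona to conclude that $k$ is a spherical braid, hence isotopic to $S^1\times\{pt\}$; that route never invokes Property R and in fact reproves it for L-spaces. Your $g\ge 2$ exclusion and your endgame ($g=0$ gives $Y_0(K)\cong Y\#(S^1\times S^2)$, so $Y\cong S^3$ by prime decomposition, then Gabai) are sound.

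However, the step you yourself flag as delicate --- excluding $g=1$ --- contains a genuine error, not merely a gap. The principle you propose, that $\HFp_{red}(Y_0(K),\mathfrak{s})=0$ in every \spinc structure together with $\HFp_{red}(Y)=0$ forces the reduced knot Floer homology of $K$ to vanish, is false. The right-handed trefoil $T_{2,3}\subset S^3$ is a counterexample: $S^3$ is an L-space integer homology sphere and $T_{2,3}$ has genus one, yet $\HFp_{red}(S^3_0(T_{2,3}),\mathfrak{s})=0$ for every $\mathfrak{s}$. Indeed, since $T_{2,3}$ is an L-space knot, $H_*(A^+_0)\cong \mathcal{T}^+$ is a single tower on which $v^+_0+h^+_0$ acts as $U^{V_0}+U^{H_0}=U+U=0$ over $\eff$, so the zero-surgery mapping cone has homology equal to two bare towers; in the nontorsion \spinc structures $\HFp$ vanishes by adjunction because $g=1$. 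What distinguishes $S^3_0(T_{2,3})$ from $S^1\times S^2$ is not the reduced homology but the absolute gradings: its towers sit in degrees $\tfrac12-2V_0$ and $-\tfrac12-2H_0$ rather than $\pm\tfrac12$. Any correct version of your $g=1$ step must therefore use the $d$-invariants $d_{\pm 1/2}(Y_0(K))=\pm\tfrac12$ (forcing $V_0=H_0=0$) in addition to the vanishing of the reduced part; one can then argue that the graded mapping cone forces $H_*(A^+_0)\cong\mathcal{T}^+_0$ with $(v^+_0)_*$ an isomorphism, and --- using that for a genus-one knot the subcomplex $C\{i<0,\,j\ge 0\}$ of $A^+_0$ has homology $\widehat{HFK}(Y,K,1)$ --- conclude $\widehat{HFK}(Y,K,1)=0$, contradicting Ni's genus detection. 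Your parenthetical fallback (torus bundles) does not repair the argument, since it applies only to fibered knots and nothing in your proof shows $K$ is fibered; fiberedness is exactly the input the paper obtains from Ni--Vafaee.
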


\begin{proof}
 By hypothesis, $W$ is built from $S^1 \times B^3$ by attaching a single 2-handle along a knot $K$ in $S^1 \times S^2$ which generates $H_1(S^1\times S^2)$ . Thus the boundary $Y=\partial W$ is an L-space obtained from $S^1 \times S^2$ by Dehn surgery on $K$. By work of Ni and Vafaee \cite[Theorem~1.1]{ni-vafaee}, the complement of $K$ is fibered. As observed in \cite[Remark~3.2]{ni-vafaee}, it follows by work of Baker, Buck, and Lecuona \cite{baker-buck-lecuona} that $K$ is isotopic to a spherical braid. The only spherical braid in $S^1 \times S^2$  which generates $H_1$ is $S^1\times \{pt\}$. We conclude that $K$ is isotopic to $S^1\times \{pt\}$, hence $W$ is diffeomorphic to $B^4$ and $Y$ is diffeomorphic to $S^3$.
\end{proof}

As observed by Conway and Tosun \cite{conway-tosun:mazur}, this gives an alternative to Gabai's proof of Property R: Indeed, if a knot $K$ in an L-space $Y$ admits a surgery to $S^1 \times S^2$, then the above argument shows that $Y=S^3$ and that the surgery dual to $K$ is $S^1 \times \{pt\} \subset S^1 \times S^2$. Applying Lemma~\ref{lem:duals}, we conclude that $K \subset S^3$ is the unknot.

\section{Knot trace invariants and knot Floer homology}
\label{sec:HF}

This section develops the Floer-theoretic input for our main results; our first objectives are to complete the two remaining steps in the proof of Theorem \ref{thm:mazur}: the trace invariance of $\nu$ (\S\ref{subsec:nu}) and the twist inequality for $\nu$ (\S\ref{subsec:twist}). In \S\ref{subsec:other}, we discuss other concordance invariants derived from knot Floer homology and prove Theorem~\ref{thm:tau}.

\subsection{Knot trace invariance of $\boldsymbol{\nu}$} \label{subsec:nu}

Our main technical tool in the proof of Theorem~\ref{thm:nu} is the mapping cone formula for Heegaard Floer homology of 3-manifolds arising by surgery on a knot; in particular we will rely on the fact that a certain map in the homology exact triangle of the mapping cone corresponds to the map induced in Heegaard Floer homology by a surgery cobordism. We give some details; a full exposition can be found in \cite{oz-sz:integer}.

To a knot $K\subset S^3$ we can associate a sequence of (Heegaard Floer) chain complexes $\{A_s,\ s\in \zee\}$ over the field $\eff = \zee/2\zee$, which are described loosely as the ``hook'' complexes $\{\max\{i, j-s\} =0\}$ (these are subquotients of the filtered knot Floer chain complex $CFK^\infty$). For each $s$ we let $B_s = \{i =0\}$, which is just the complex $\cfhat(S^3)$ (with the filtration given by the knot). We have:
\begin{itemize}
\item For each $s$, the homology of $B_s$ is $\hfhat(S^3) = \eff$.
\item For $|s|$ sufficiently large, $H_*(A_s)$ is also $\eff$: for $s\gg 0$ in fact $A_s = B_s$, while for $s\ll 0$ we have $A_s = \{j= s\}$, which is chain homotopic to $B_s$. 
\item For each $s$ there are natural quotients-followed-by-inclusions $v_s: A_s \to \{i = 0\} = B_s$ and $h_s: A_s \to \{j = s\} \simeq B_s$. Here ``$\simeq$'' indicates chain homotopy equivalence; since we are concerned only with maps on homology and the isomorphism $H_*\{j=s\} \cong H_*(B_s) \cong \eff$ is unique, we need no additional information about the equivalence. If $s$ is sufficiently large then $v_s$ is an isomorphism while $h_s$ is 0, and if $s$ is sufficiently negative then $v_s =0$ while $h_s$ is an isomorphism.
\item In fact, we have that $v_s$ is surjective in homology if and only if the same is true of $h_{-s}$, and moreover if $v_s$ is surjective for some $s$ then so is $v_{s'}$ for all $s'>s$. 
\end{itemize}

By definition, $\nu(K)$ is the smallest value of $s$ for which $v_s$ is surjective in homology.

Now fix an integer $n$. We define a linear map $D_n: \bigoplus A_s \to \bigoplus B_s$ as follows. In symbols, if $(s, x)\in \bigoplus A_s$ (where the first entry indicates the summand in which $x$ lies), then $D_n(s,x) = (s, v_s(x)) + (s+n, h_s(x))$. Diagrammatically, $D_n$ is given by the arrows below:

\begin{equation}\label{eq:cone}
\begin{tikzcd}
A_{s-1} \ar[ddd, xshift=-4.5mm] \ar[rrrrddd, xshift=-9.5mm]
&     A_s      \ar[ddd, crossing over, xshift=-14mm]  \ar[rrrrddd, xshift=-18.5mm]               & A_{s+1}  \ar[ddd, crossing over, xshift=-22.5mm]  \ar[rrrrddd, xshift=-28mm] & \cdots     & A_{s+n-1}  \ar[ddd, crossing over, xshift=-32mm]  & A_{s+n}   \ar[ddd, crossing over, xshift=-37mm]  & A_{s+n+1}  \ar[ddd, crossing over, xshift=-41mm]  \\ \\
\\
B_{s-1}          & B_{s}                    & B_{s+1}             & \cdots  & B_{s+n-1} & B_{s+n} & B_{s+n+1} 
\end{tikzcd}
\end{equation}

\noindent Here each $A_s$ is the source of both a vertical and diagonal arrow (and the diagonals go to the left if $n<0$). 

The mapping cone of the chain map $D_n$ is written $\X_n$, and it is known \cite{oz-sz:integer} that the homology of $\X_n$ is isomorphic to the Heegaard Floer homology $\hfhat(S^3_n(K))$. Observe that there are natural chain maps (inclusions) $B_s \to \X_n$ for each $s$, whose maps in homology correspond under this isomorphism to maps $\hfhat(S^3)\to \hfhat(S^3_n(K))$. {\OnS} showed that under the quasi-isomorphism between $\X_n$ and $\cfhat(S^3_n(K))$ the map in homology induced by the inclusion of $B_s$ into the mapping cone corresponds to the map $\hfhat(S^3)\to \hfhat(S^3_n(K))$ induced by the surgery cobordism $X_n: S^3\to S^3_n(K)$, equipped with the \spinc structure $\s_s$ characterized by the property
\begin{equation}\label{spincnumbering}
\langle c_1(\s_s), \sigma\rangle + n = 2s,
\end{equation}
where $\sigma \in H_2(W_n;\zee)$ is a generator.

The following is a trivial adaptation of \cite[Lemma~4.2]{mark-tosun:naturality}. 

\begin{lem}\label{epsilonlemma}
The maps $v_s, h_s: A_s\to B$ induce the same nontrivial map in homology if and only if $ s = \epsilon(K) = 0$.
\end{lem}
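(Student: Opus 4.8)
The plan is to first reduce to the case $s=0$ by a grading argument, and then identify equality of the two induced maps at $s=0$ with the vanishing of $\epsilon(K)$. Since $H_*(B)=\hfhat(S^3)\cong\eff$ is a single copy of $\eff$ concentrated in Maslov grading $0$, each of $(v_s)_*,(h_s)_*\colon H_*(A_s)\to\eff$ is either zero or surjective; thus the phrase ``same nontrivial map'' means precisely that both are nonzero and they agree as functionals. I would record this normalization at the outset.

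Next I would handle $s\neq 0$ internally to $CFK^\infty$. The map $v_s$ is the grading-preserving quotient $A_s\to C\{i=0\}=B$, so $(v_s)_*$ is supported in Maslov grading $0$ of $H_*(A_s)$. The map $h_s$ is the quotient $A_s\to C\{j=s\}$ followed by the canonical identification $C\{j=s\}\simeq B$; using the conjugation symmetry $C\{j=s\}\cong C\{i=s\}=U^{-s}\,C\{i=0\}$, the generator of $H_*(C\{j=s\})$ sits in grading $2s$, so $(h_s)_*$ is supported in Maslov grading $2s$ of $H_*(A_s)$. For $s\neq 0$ these two gradings are distinct, so if both maps are nonzero I can choose a homogeneous $\xi\in H_0(A_s)$ with $(v_s)_*(\xi)\neq 0$ and then $(h_s)_*(\xi)=0$. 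Hence the two functionals cannot agree nontrivially unless $s=0$, which establishes one half of the statement.

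It remains to analyze $s=0$, where both maps are grading-preserving. Here, by the definition of $\nu$ together with the symmetry that $v_0$ is surjective in homology if and only if $h_0$ is, the maps $(v_0)_*$ and $(h_0)_*$ are simultaneously nonzero exactly when $\nu(K)\le 0$. In that regime I would show $(v_0)_*=(h_0)_*$ if and only if $\epsilon(K)=0$; since $\epsilon(K)=0$ forces $\tau(K)=\nu(K)=0$, this finishes the lemma. Concretely, passing to a vertically and horizontally simplified model of $CFK^\infty(K)$ makes $A_0$ and the maps $v_0,h_0$ explicit, and the trichotomy for $\epsilon$ \cite{hom:epsilon} translates directly: when $\epsilon(K)=0$ the generator carrying $\hfhat(S^3)$ splits off as a trivial summand on which $v_0$ and $h_0$ both restrict to the identity (the case where $CFK^\infty(K)$ is locally equivalent to that of the unknot), forcing $(v_0)_*=(h_0)_*$; when $\epsilon(K)=\pm 1$ the distinguished vertical and horizontal generators are genuinely distinct, and one exhibits a homology class detected by exactly one of $v_0,h_0$.

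The step I expect to be the main obstacle is this last one—ruling out $(v_0)_*=(h_0)_*$ when $\epsilon(K)=\pm 1$. Unlike the individual nonvanishing of $v_0$ and $h_0$, which is governed by $\nu(K)$ and $\nu(-K)$, the equality of the two functionals is finer data that genuinely requires the internal structure of $CFK^\infty(K)$ encoded by $\epsilon$. This is exactly the computation carried out in \cite[Lemma~4.2]{mark-tosun:naturality}, and the remaining work is to verify that its basis-level analysis of $v_0$ and $h_0$ carries over verbatim to the present hat-flavored mapping-cone formulation.
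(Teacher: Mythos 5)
The paper offers no argument for this lemma beyond the single sentence declaring it ``a trivial adaptation'' of \cite[Lemma~4.2]{mark-tosun:naturality}, and your proposal bottoms out in exactly that same citation for the one genuinely hard step---establishing $(v_0)_*=(h_0)_*$ when $\epsilon(K)=0$ and ruling it out when $\epsilon(K)=\pm 1$---so you are taking essentially the same route. Your Maslov-grading reduction to $s=0$ is correct ($v_s$ and $h_s$ are grading-preserving quotients onto $\{i=0\}$ and $\{j=s\}$, whose homologies are supported in gradings $0$ and $2s$ respectively, so for $s\neq 0$ two nonzero functionals supported in distinct gradings cannot coincide), and it supplies explicitly the part of the ``adaptation'' that the paper leaves to the reader.
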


Note that under the conditions of the lemma, we also have $\nu(K) = \nu(-K) = \tau(K) = 0$.

\begin{prop}\label{mapchar} Let $K\subset S^3$ be a knot and $n$ a fixed integer, and let $\s$ be a  \spinc structure on the cobordism $X_n(K)$, inducing the map $F_\s:\hfhat(S^3)\to \hfhat(S^3_n(K))$.
\begin{enumerate}
\item If  $|\langle c_1(\s), \sigma\rangle | > 2\nu(K) - n$, then $F_\s$ is zero.
\item If $|\langle c_1(\s), \sigma\rangle | < 2\nu(K) - n$, 
 then $F_\s$ is nonzero.
\item If $|\langle c_1(\s), \sigma\rangle | =2\nu(K) - n$, then $F_\s$ is nonzero if and only if $\epsilon(K) = 0$ and $n\leq 0$.
\end{enumerate}
\end{prop}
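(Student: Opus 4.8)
The plan is to push everything into the mapping cone $\X_n$ and reduce the statement to deciding, for each $s$, whether a single basis vector lies in the image of $(D_n)_*$ on homology. First I would note that since $H^2(X_n;\zee)\cong\zee$ and $\langle c_1(\s),\sigma\rangle\equiv\sigma\cdot\sigma=n\pmod 2$ for every \spinc structure, each $\s$ on $X_n$ equals $\s_s$ for the unique integer $s=\tfrac12(\langle c_1(\s),\sigma\rangle+n)$. Writing $c=\langle c_1(\s),\sigma\rangle=2s-n$, the three hypotheses $|c|>2\nu(K)-n$, $|c|<2\nu(K)-n$, and $|c|=2\nu(K)-n$ translate respectively into the conditions ``$s>\nu(K)$ or $s<n-\nu(K)$'', ``$n-\nu(K)<s<\nu(K)$'', and ``$s\in\{\nu(K),\,n-\nu(K)\}$'' on the integer $s$. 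By the correspondence recalled before the proposition, $F_\s$ is the map induced by the inclusion $B_s\hookrightarrow\X_n$; feeding the short exact sequence $0\to\bigoplus_t B_t\to\X_n\to\bigoplus_t A_t\to 0$ into its long exact sequence (whose connecting homomorphism is $(D_n)_*$) shows that $F_\s$ vanishes if and only if the generator $e_s$ of $H_*(B_s)\cong\eff$ lies in $\im(D_n)_*$. This reduces the proposition to a membership question.

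Next I would compute $\im(D_n)_*$ inside $\bigoplus_t \eff\,e_t$. Since $(D_n)_*$ respects the decomposition by source and carries a class of $H_*(A_t)$ to $(v_t)_*(\cdot)\,e_t+(h_t)_*(\cdot)\,e_{t+n}$, the image is the sum over $t$ of the images of $((v_t)_*,(h_t)_*)\colon H_*(A_t)\to \eff e_t\oplus\eff e_{t+n}$. The surjectivity pattern recorded before the proposition gives $(v_t)_*\neq0$ exactly for $t\ge\nu(K)$ and $(h_t)_*\neq0$ exactly for $t\le-\nu(K)$, so both are nonzero only in the (possibly empty) window $\nu(K)\le t\le-\nu(K)$. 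Outside this window each $A_t$ contributes a single basis vector ($e_t$ when $t\ge\nu(K)$, $e_{t+n}$ when $t\le-\nu(K)$); inside it I would invoke Lemma~\ref{epsilonlemma}: the combined map is surjective onto $\eff e_t\oplus\eff e_{t+n}$, contributing $e_t$ and $e_{t+n}$ independently, unless $(v_t)_*=(h_t)_*\neq0$, which by the lemma happens only at $t=0$ with $\epsilon(K)=0$, in which case $A_0$ contributes only the diagonal vector $e_0+e_n$.

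I would then conclude by cases on $\epsilon(K)$. If $\epsilon(K)\neq0$ (in particular whenever $\nu(K)\neq0$) there is no diagonal contribution, so $\im(D_n)_*=\operatorname{span}\big(\{e_t:t\ge\nu(K)\}\cup\{e_u:u\le n-\nu(K)\}\big)$ and hence $e_s\notin\im(D_n)_*$ exactly when $n-\nu(K)<s<\nu(K)$. Matching against the translations above yields (1) and (2) at once and shows $F_\s=0$ at the boundary values $s\in\{\nu(K),n-\nu(K)\}$ of (3), consistent with the exception failing when $\epsilon(K)\neq0$. If instead $\epsilon(K)=0$, then $\nu(K)=0$ and $\im(D_n)_*=\operatorname{span}\big(\{e_t:t\ge1\}\cup\{e_u:u\le n-1\}\cup\{e_0+e_n\}\big)$; here the diagonal generator $e_0+e_n$ (which collapses to $0$ when $n=0$) prevents both $e_0$ and $e_n$ from lying in the image once $n\le0$, so a short check gives $e_s\notin\im(D_n)_*$ precisely for $n\le s\le 0$. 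This promotes the open interval of (2) to a closed one, so that the two boundary \spinc structures of (3)---which exist exactly when $2\nu(K)-n=-n\ge0$, i.e.\ when $n\le0$---now support nonzero maps, producing the stated exception.

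The main obstacle is the overlap window $\nu(K)\le t\le-\nu(K)$, where $v_t$ and $h_t$ are simultaneously nontrivial: the entire content of the exceptional case lies in distinguishing the generic rank-$2$ contribution from the degenerate diagonal rank-$1$ contribution, and it is precisely Lemma~\ref{epsilonlemma} that pins this degeneracy to the single slot $t=0$ under the hypothesis $\epsilon(K)=0$. Once that dichotomy is established, the remaining steps---that every \spinc structure is some $\s_s$ and that $F_\s$ is the inclusion-induced map fitting into the mapping-cone long exact sequence---are routine bookkeeping from the setup already in place.
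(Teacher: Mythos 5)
Your proposal is correct and takes essentially the same route as the paper: both reduce the vanishing of $F_{\s_s}$ to whether the generator of $H_*(B_s)$ lies in the image of $(D_n)_*$ in the mapping cone, both use the surjectivity thresholds $(v_t)_*\neq 0 \Leftrightarrow t\geq\nu(K)$ and $(h_t)_*\neq 0 \Leftrightarrow t\leq-\nu(K)$, and both rely on Lemma~\ref{epsilonlemma} to isolate the degenerate case where $v_t$ and $h_t$ induce the same surjection. The only difference is organizational: you compute $\operatorname{im}(D_n)_*$ once as an explicit subspace (spanned by basis vectors plus possibly the diagonal vector $e_0+e_n$) and read off all three cases, including the $n=0$ collapse and the $n<0$ exception, whereas the paper establishes the same facts case by case with explicit element-chasing.
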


\begin{proof}
The map induced by a \spinc structure $\s$ on $X_n(K)$ is given by the inclusion of $B_s$ into the mapping cone, where $s = \frac{1}{2}(\langle c_1(\s),\sigma\rangle + n)$. The inequality $|\langle c_1(\s), \sigma\rangle | < 2\nu(K) - n$ is then equivalent to 
\[
-\nu(K) +n < s < \nu(K),
\]
which means that the homology of $B_s$ is not in the image of any $v_k$ or $h_k$. Hence its homology survives in the homology of the mapping cone, giving that the corresponding cobordism map is nontrivial.

Now suppose $|\langle c_1(\s), \sigma\rangle | \geq 2\nu(K) - n$. By conjugation invariance it suffices to consider the case $\langle c_1(\s), \sigma\rangle \geq 2\nu(K) -n$, which is equivalent by definition of $\nu$ to the assumption that $H_*(B_s)$ is in the image of $v_s$. It follows that $H_*(B_s)$ is in the image of the map on homology induced by $D_n$, as long as there is an element $y_s\in A_s$ with $v_s(y_s) $ generating homology of $B_s$ and $h_s(y_s) = 0$.  This is obvious if $s > -\nu(K)$, for then $h_s$ is the zero map, so let us assume $s\leq -\nu(K)$. If $v_s$ and $h_s$ do not induce the same surjection $H_*(A_s)\to H_*(B) = \eff$ then the desired element can be found. (Since ${h_s}$ and ${v_s}$ do not induce the same surjection then either there exists an element $x\in A_s$ with $({h_s})_*(x)=0$ and  $({v_s})_*(x)\neq 0$ --- in this case we are very pleased indeed --- or there exists an element $x$ in $A_s$ such that $({h_s})_*(x)\neq 0$ and $({v_s})_*(x)=0$. In the latter case, there is also some $x'$ in $A_s$ with both $({v_s})_*(x')\neq 0$ and $({h_s})_*(x')\neq 0$, and then $x+x'$ suffices.)  By Lemma \ref{epsilonlemma}, $v_s$ and $h_s$ do not induce the same surjection unless $s =\nu(K) = 0$, so the first statement of the proposition (in which $s > \nu(K)$) follows. 

Finally consider the last case in the statement, where $s = \nu(K)$ (after conjugating if necessary). By the argument in the previous paragraph, if $\epsilon(K) \neq 0$ then the induced map is trivial. Suppose $\epsilon(K) = 0$ and $n >0$: we claim the induced map $H_*(B_0) \to H_*(\X_n)$ is still trivial. Let $x_0 $ be the generator of $H_*(B_0) = \eff$. Since $\nu(K) =0$, there exists  $y_0\in H_*(A_0)$ with $v_0(y_0) = x_0$. By Lemma \ref{epsilonlemma} the image $h_0(y_0) = x_n\in H_*(B_n)$ is nonzero, and since $n>0$ we know $v_n$ maps $H_*(A_n)$ onto $H_*(B_n)$. Moreover, the map $h_n: H_*(A_n)\to H_*(B)$ is trivial because $\nu(K) =0$,  so for any $y_n\in H_*(A_n)$ such that $v_n(y_n) = x_n$, we have  $D_n(y_0 + y_n) = x_0$. Therefore the inclusion $H_*(B_0)\to H_*(\X_n)$ is trivial. 

To complete the proof we must show that if $\epsilon(K) = 0$ and $n\leq 0$ then $H_*(B_0)\to H_*(\X_n)$ is nonzero. Let $x_0$ be the generator of $H_*(B_0)$ as before, and assume $n< 0$. Then $x_0$ is in the image of $v_0$ but not of $h_{-n}$ (which is the zero map since $-n> -\nu(K) = 0$). For any $y_0\in H_*(A_0)$ with $v_0(y_0) = x_0$, we have $h_0(y_0) = x_{n}\neq 0 \in H_*(B_n)$, by Lemma \ref{epsilonlemma}. Moreover, $x_n$ is not in the image of $v_n$, since the latter is the zero map. It follows that for any nonzero element of the image of $D_n$, if the component in $H_*(B_0)$ is nontrivial then so is the component in $H_*(B_n)$, and in particular $H_*(B_0)\cap im(D_n) = 0$. Hence the induced map is nontrivial when $n < 0$, as desired. Finally if $n = 0$, the complex $\X_n$ splits, and the subcomplex containing $B_0$ consists of the cone on the map $v_0 + h_0: A_0\to B_0$. By Lemma \ref{epsilonlemma} this sum vanishes in $\eff$-coefficient homology, so the inclusion of $H_*(B_0)$ is nontrivial.
\end{proof} 

We will use $F_s$ to denote the map $F_{\s}$ in \spinc structure $\s_s$ with $\langle c_1(\s_s), \sigma\rangle + n = 2s$. Proposition \ref{mapchar} demonstrates that the set of values $s \in \mathbb{Z}$ with $F_s \neq 0$ is a (possibly empty) interval of integers centered at $n/2$. If this set is nonempty, we denote its maximum by $s_{\max}$ (else we say $s_{\max}$ is not defined). 

\begin{cor}\label{cor:smax}
If $F_s=0$ for all $s \in \mathbb{Z}$, then $\nu(K) \leq n/2$ if $n$ is even and $\nu(K)\leq (n+1)/2$ if $n$ is odd.  Otherwise, 
\begin{equation}\label{nubound}
s_{\max} \leq \nu(K) \leq s_{\max}+1,
\end{equation}
with $\nu(K)=s_{\max}$ if and only if $\epsilon(K)=\nu(K)=0$ and $n \leq 0$.
\end{cor}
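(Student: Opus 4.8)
The plan is to reinterpret Proposition~\ref{mapchar} entirely in terms of the index $s$, using the defining relation $\langle c_1(\s_s),\sigma\rangle = 2s-n$, and then simply read off $s_{\max}$. Substituting into the three cases of the proposition, the quantity $|\langle c_1(\s_s),\sigma\rangle| = |2s-n|$ compared against $2\nu(K)-n$ shows that $F_s\neq 0$ precisely on the open integer interval $n-\nu(K)<s<\nu(K)$ (case (2)), that $F_s=0$ for $s>\nu(K)$ or $s<n-\nu(K)$ (case (1)), and that at the two endpoints $s=\nu(K)$ and $s=n-\nu(K)$ one has $F_s\neq 0$ if and only if $\epsilon(K)=0$ and $n\leq 0$ (case (3)). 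I also record the observation that if some $F_s\neq 0$ then necessarily $2\nu(K)-n\geq 0$, since otherwise every $|2s-n|$ exceeds $2\nu(K)-n$ and case (1) forces all maps to vanish.

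For the first assertion, suppose $F_s=0$ for all $s$. Then in particular the open interval $(n-\nu(K),\nu(K))$ contains no integer, as any such integer would give a nonzero map by case (2). Since both endpoints are integers, this forces $\nu(K)\leq (n-\nu(K))+1$, i.e.\ $2\nu(K)\leq n+1$. When $n$ is odd this is exactly $\nu(K)\leq (n+1)/2$; when $n$ is even, both $2\nu(K)$ and $n$ are even, so the inequality sharpens to $2\nu(K)\leq n$, giving $\nu(K)\leq n/2$. This yields the first sentence of the corollary with no further work.

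For the second assertion, suppose instead that some $F_s\neq 0$, so that $s_{\max}$ is defined and $2\nu(K)-n\geq 0$. Case (1) immediately gives $s_{\max}\leq \nu(K)$, and it remains only to decide between $s_{\max}=\nu(K)$ and $s_{\max}=\nu(K)-1$; this is governed entirely by the top endpoint value $F_{\nu(K)}$. If $F_{\nu(K)}\neq 0$ then $s_{\max}=\nu(K)$, and by case (3) this happens exactly when $\epsilon(K)=0$ and $n\leq 0$, in which situation $\epsilon(K)=0$ forces $\nu(K)=0$, matching the claimed equality condition. If instead $F_{\nu(K)}=0$, I will show $s_{\max}=\nu(K)-1$ by checking $F_{\nu(K)-1}\neq 0$: when $2\nu(K)-n\geq 2$ the index $\nu(K)-1$ lies strictly inside the interval and case (2) applies, while the degenerate possibilities $2\nu(K)-n\in\{0,1\}$ cannot occur here, since in those cases the open interval is empty and both endpoints share the same (now vanishing) boundary value, forcing every $F_s=0$ and contradicting nonemptiness. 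Combining, $s_{\max}\leq\nu(K)\leq s_{\max}+1$, with equality on the left exactly when $\epsilon(K)=\nu(K)=0$ and $n\leq 0$.

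The only genuinely delicate point is the bookkeeping at the endpoints in the degenerate range $2\nu(K)-n\in\{0,1\}$, where the open interval collapses and all behavior is dictated by case (3); the main obstacle is simply to confirm that these edge cases are consistent with—and indeed forced by—the hypothesis that the set of nonzero maps is nonempty, so that no spurious value of $s_{\max}$ strictly below $\nu(K)-1$ can arise.
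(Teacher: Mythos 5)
Your proof is correct and takes essentially the same approach as the paper's: both arguments simply translate the three cases of Proposition~\ref{mapchar} into statements about the index $s$ via $\langle c_1(\s_s),\sigma\rangle = 2s-n$, identify where the interval of nonvanishing maps sits relative to $\nu(K)$, and read off the parity bound, the two-sided bound \eqref{nubound}, and the equality criterion from case (3). The only cosmetic difference is that the paper obtains $s_{\max}\leq\nu(K)\leq s_{\max}+1$ by invoking conjugation invariance together with $F_{s_{\max}+1}=0$, whereas you argue directly from the inequalities and a case analysis on $F_{\nu(K)}$, with explicit (and correct) handling of the degenerate cases $2\nu(K)-n\in\{0,1\}$.
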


\begin{proof}
Suppose the maps $F_s$ are trivial for all $s \in \mathbb{Z}$. Proposition~\ref{mapchar}(2) implies that $2\nu(K)-n \leq |\langle c_1(\s) , \sigma\rangle |$ for all \spinc structures $\s$. Because the set of Chern numbers $\langle c_1(\s) , \sigma\rangle$ is the set of integers whose parity agrees with that of $n$,  the minimum value of $|\langle c_1(\s) , \sigma\rangle |$ is 0 if $n$ is even and 1 if $n$ is odd. The stated bound on $\nu$ follows.

If there exist nontrivial maps $F_s$, then $s_{\max}$ is defined. By conjugation invariance of maps induced by \spinc cobordisms, the \spinc structure $\mathfrak{m}$ with $\langle c_1(\mathfrak{m}), \sigma\rangle + n = 2s_{\max}$ has $\langle c_1(\mathfrak{m}),\sigma \rangle\geq 0$, and by Proposition~\ref{mapchar}(1) we have $s_{\max} \leq \nu(K)$ (cf.~\eqref{spincnumbering}).  On the other hand, since $F_{s_{\max}+1}=0$, Proposition~\ref{mapchar}(2) implies that $s_{\max}+1 \geq \nu(K)$.

By the definition of $s_{\max}$, if $\nu(K) = s_{\max}$ then $F_\nu \neq 0$. On the other hand if $F_\nu \neq 0$ then $s_{\max}\ge \nu(K)$, but we have just seen that $\nu(K)\leq s_{\max}$ in general. Hence $\nu(K) = s_{\max}$ if and only if $F_\nu \neq 0$. But Proposition~\ref{mapchar}(3)  implies that $F_\nu \neq 0$ if and only if $\epsilon(K)=\nu(K)=0$ and $n \leq 0$.
\end{proof}

\begin{proof}[Proof of Theorem~\ref{thm:nu}] Our proof proceeds in two steps: When $n \geq 0$, we show that the oriented knot trace $X=X_n(K)$ determines $\nu(K)$; when $n <0$, we show that  either $X$ determines $\nu(K)$ or $\nu(K)\in\{0,1\}$.

To begin, observe that the integers $n$ and $s_{\max}$ are determined by the oriented manifold $X$.  In addition, $X$ determines its orientation reversal $-X \cong X_{-n}(-K)$. We denote \spinc structures on $-X$ by $\s'$ and their corresponding labels determined by \eqref{spincnumbering}  by $s' \in \mathbb{Z}$. As above, we denote the largest $s'$ with $F_{-X,s'}$ nonzero by $s'_{\max}$. We will extract information about $\nu(K)$ and $|\epsilon(K)|$ from the triple of integers $n$, $s_{\max}$, and $s'_{\max}$.

First, suppose $n \geq 0$. We begin with the case in which $X$ admits at least one \spinc structure inducing a nontrivial map: in this case the proof amounts to deciding which of the two possibilities allowed by \eqref{nubound} hold. From the last line of Corollary \ref{cor:smax}, if $n >0$ or $s_{\max}\neq 0$, then $\nu(K)=s_{\max}+1$. In this case it also follows that $\epsilon(K)\neq 0$. 

On the other hand, if $s_{\max}=0$ and $n=0$, then either $\epsilon(K)=\nu(K)=s_{\max}=0$ or $\epsilon(K) \neq 0$ and $\nu(K)=s_{\max}+1=1$. We claim that the possibilities $\epsilon(K)=0$ and $\epsilon(K)\neq0$ can be distinguished by considering $s'_{\max}$. If $\epsilon(K)=0$, then $\epsilon(-K)=0$ and $\nu(-K)=0$. Since $n = \nu(-K) = 0$, it follows from Proposition \ref{mapchar}(3) that $s_{\max}'= 0$, in particular $s_{\max}'$ is defined. Otherwise,  if $\epsilon(K)\neq 0$ then $\epsilon(-K)\neq 0$ and $\nu(-K) = 0$, in which case Proposition \ref{mapchar} implies that there are no \spinc structures on $-X$ inducing nontrivial maps, i.e. $s_{\max}'$ is undefined. Summarizing, we find that if $s_{\max} = 0$ and $n = 0$, then $\nu(K) = 0$ if $s'_{\max}$ is defined (and in this case $\epsilon(K) = 0$), and otherwise $\nu(K) = 1$ (and $\epsilon(K) \neq 0$). Thus, $\nu(K)$ and $|\epsilon(K)|$ are determined by the invariants $s_{\max}$, $s'_{\max}$ and $n$ associated to $X_n(K)$, as long as $s_{\max}$ is defined.

We next consider the case in which $-X\cong X_{-n}(-K)$ admits at least one \spinc structure inducing a nontrivial map, i.e.~$s'_{\max}$ is defined.  
Suppose $s'_{\max}\neq0$. If $\epsilon(-K)=0$, then $\nu(-K)=0$. But $-n\le 0$ by hypothesis, thus Corollary~\ref{cor:smax} implies that $\nu(-K)=s'_{\max}\neq 0$, a contradiction. Thus when $s'_{\max}\neq0$, we must have $\epsilon(-K)\neq 0$. This implies $\nu(-K) = s'_{\max}+1$ and $\nu(K)=-\nu(-K)+1=-s'_{\max}$. Supposing instead now that $s'_{\max}=0$, we claim $\nu(K)=0$. Indeed, if $\nu(-K)=s'_{\max}=0$, then $\epsilon(-K)=0$ and therefore $\nu(K)=0$. And if $\nu(-K)=s'_{\max}+1=1$, then $\epsilon(-K)\neq 0$ and therefore $\nu(K)=-\nu(-K)+1=0$. (Note that when $s'_{\max} =0$ and $s_{\max}$ is not defined,  we cannot determine $|\epsilon(K)|$.)

Finally, consider the case in which neither $X$ nor $-X$  admits \spinc structures inducing nontrivial maps. First, we claim that $\epsilon(K) \neq 0$. Indeed, if $\epsilon(K)=0$ then $\nu(-K)=\epsilon(-K)=0$ and therefore Proposition~\ref{mapchar}(3)  implies that that the map induced by the \spinc structure $\s'$ on $-X_n(K)$ satisfying $\langle c_1(\s'), \sigma\rangle = n$ is nontrivial, a contradiction.  Next, we claim that this case only arises if $n$ is odd. Indeed, if $n$ is even, then applying the first part of Corollary~\ref{cor:smax} to $X$ and $-X$ yields the bounds $\nu(K) \leq n/2$ and $-\nu(K)+1=\nu(-K) \leq -n/2$, which combine to give $n/2+1 \leq \nu(K) \leq n/2$, a contradiction. Thus we may assume $n$ is odd. In this case, an analogous argument yields the inequality
$$\tfrac{1}{2}(n+1)=-\tfrac{1}{2}(-n+1)+1 \leq \nu(K) \leq \tfrac{1}{2}(n+1).$$
This implies $\nu(K)=(n+1)/2$, completing the argument in the case where $n \geq 0$.

For $n<0$, we may shift to considering the mirror $-K$. Since $-n>0$, the preceding argument shows that $-X=X_{-n}(-K)$ determines $\nu(-K)$. Moreover, there is only one case in which we fail to determine $|\epsilon(-K)|$ and in this case $\nu(-K)=0$. When we can determine both $\nu(-K)$ and $|\epsilon(-K)|$, we can determine $\nu(K)$ as well. In the exceptional case, since $\nu(-K)=0$, we must have that $0\leq \nu(K) \leq 1$. It follows then that for a pair of knots $K$ and $K'$ with $X_n(K)\cong X_n(K')$, we can only have  $\nu(K)\neq\nu(K')$ if $\{\nu(K),\nu(K')\}=\{0,1\}$ and $n<0$. 
\end{proof}

The information deduced in the preceding argument is summarized in the following table, in which information about $X_n(K)$ is listed on the left with corresponding deductions about $\nu(K)$ and $|\epsilon(K)|$ on the right. A numerical condition on $s_{\max}$ or $s_{\max}'$ implicitly assumes the quantity is defined; an empty entry means no assumption is made.

\bigskip
\begin{center}
\begin{tabular}{c|c|c||c|c}
$s_{\max}$ & $s_{\max}'$ & $n$ & $\nu(K)$ & $|\epsilon(K)|$ \\ \hline\hline

$\neq 0$ & &  &$s_{\max} + 1$ & 1 \\ 
defined & & $>0$ &  $s_{\max} + 1$ & 1 \\ 
0 & defined & 0 & 0 & 0 \\ 
0 & undef & 0 & 1 & 1\\ 
undef & 0 & $>0$ & 0 & undet. \\ 
undef & undef &  & $\frac{1}{2}(n+1)$ & 1 \\
& $\neq 0$  &  & $-s_{\max}'$ & 1 \\ 
 & defined & $< 0$ & $-s_{\max}'$ & 1 \\ 
0 & undef & $<0$ & undet & undet. 
\end{tabular}
\end{center}
\bigskip

We remark that the triple $\{n, s_{\max}, s'_{\max}\}$ is indeed insufficient to determine $\nu(K)$ in the case where we draw no conclusion. For example, the right-handed trefoil knot $K$ has  $\nu(K) = \epsilon(K) = 1$ and the figure-eight knot $K'$ has $\nu(K') =\epsilon(K')= 0$, but Lemma \ref{mapchar} shows that for any $n<0$ both knots have $s_{\max}  =0$ and $s_{\max}'$ undefined. Indeed, the set of (Chern numbers of) \spinc structures inducing nontrivial maps on $X$ and $-X$ for these two knots is the same. However we are unaware of any such examples having diffeomorphic $n$-traces. Therefore, it seems reasonable to conjecture that $\nu$ is a trace invariant for all framings. In fact, we can use the twist inequality for $\nu$ (stated as Proposition~\ref{prop:nutwist} and proved in the next subsection) to rule out potential counterexamples, as follows.

The primary construction used to produce pairs of knots with diffeomorphic traces is often called \emph{dualizable patterns},  and was pioneered by Akbulut in \cite{akbulut2dclasses} and developed by Lickorish in \cite{lickorishshake}. To the authors' knowledge, all pairs $K,K'$ in the literature with $X_n(K)\cong X_n(K')$ can be produced this way. We will not recall the construction here; for a recent treatment see \cite{miller-picc,picc:shake}.  When this construction is used to produce a pair of knots $K$ and $K'$ with diffeomorphic $n$-traces, then there exists a (dualizable) pattern $P$ with $P(U)\isoeq K$ and a dual pattern $P^*$ with $P^*_n(U)\isoeq K'$ and $X_m(P(U)) \cong X_m(P^*_{m}(U))$ for all $m \in \mathbb{Z}$.  

\begin{prop}\label{prop:nudual}
For any $n \in \zee$, if $K$ and $K'$ have diffeomorphic $n$-traces constructed using dualizable patterns, then $\nu(K)=\nu(K')$. 
\end{prop}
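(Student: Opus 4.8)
The plan is to bound $\nu(K)$ and $\nu(K')$ against each other from both sides, thereby proving equality outright and sidestepping the exceptional case of Theorem~\ref{thm:nu} entirely. Write $K=P(U)$ and $K'=P^*_n(U)$, and recall that the dualizable hypothesis supplies diffeomorphisms $X_m(P(U))\cong X_m(P^*_m(U))$ for every $m\in\zee$. When $n\geq 0$ there is nothing to do: the case $m=n$ of this family is exactly $X_n(K)\cong X_n(K')$, and since $n\geq 0$ Theorem~\ref{thm:nu} yields $\nu(K)=\nu(K')$ with no exception. So I would assume $n<0$, where Theorem~\ref{thm:nu} leaves open only the possibility $\{\nu(K),\nu(K')\}=\{0,1\}$, and aim to exclude it.

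First I would anchor $\nu(K)$ to the untwisted dual. Taking $m=0$ in the family gives $X_0(P(U))\cong X_0(P^*(U))$, and since $0\geq 0$ Theorem~\ref{thm:nu} gives $\nu(K)=\nu(P^*_0(U))$. Next I would compare $P^*_0(U)$ with $K'=P^*_n(U)$ using the twist inequality. Because a dualizable pattern has winding number one, the meridional disk used to twist $P^*$ meets it in algebraically one point, so $P^*_0(U)$ is obtained from $P^*_n(U)$ by adding $|n|$ positive full twists about algebraically one strand. Iterating Proposition~\ref{prop:nutwist} then gives $\nu(P^*_0(U))\leq\nu(P^*_n(U))$, whence
\[
\nu(K)=\nu(P^*_0(U))\leq\nu(P^*_n(U))=\nu(K').
\]

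For the reverse inequality I would exploit the symmetry of the dualizable construction. Since the dual of the dual recovers the original pattern and twisting the pattern twists the dual oppositely, the pair $(K',K)$ is again realized by dualizable patterns with the same framing $n$: one expects $K'=\widetilde P(U)$ and $K=\widetilde P^*_n(U)$ with $\widetilde P=P^*_n$ and $\widetilde P^*=P_{-n}$, so that the associated family reads $X_m(P^*_n(U))\cong X_m(P_{m-n}(U))$. Running the previous paragraph verbatim for this presentation---anchoring at $m=0$ to get $\nu(K')=\nu(P_{-n}(U))$, then applying the twist inequality to the (winding-number-one) pattern $\widetilde P^*$---yields $\nu(K')\leq\nu(K)$. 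Combining the two inequalities gives $\nu(K)=\nu(K')$, contradicting $\{\nu(K),\nu(K')\}=\{0,1\}$ and completing the proof.

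The main obstacle I anticipate is justifying this reversed presentation rigorously: one must confirm that duality of patterns is genuinely involutive and that it interacts with twisting so that $(K',K)$ really is a dualizable pair with the \emph{same} framing $n$, rather than merely a pair sharing a diffeomorphic trace. This is a structural feature of the Akbulut--Lickorish construction rather than a Floer-theoretic input, but it is the linchpin: mirroring the trace diffeomorphism alone only establishes $\nu(-K)=\nu(-K')$, which together with the relation $\nu(J)+\nu(-J)\in\{0,1\}$ is insufficient to rule out the exceptional case (for instance it leaves the scenario $\nu(K)=\nu(-K)=0$, $\nu(K')=1$ untouched). One should also verify the winding-number-one claim for the patterns in play, since Proposition~\ref{prop:nutwist} applies only to twists about algebraically one strand.
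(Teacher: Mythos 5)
Your proof is correct and follows essentially the same route as the paper's: anchor at the zero-traces (where Theorem~\ref{thm:nu} has no exceptional case), use the identity $X_0(P^*_n(U))\cong X_0(P_{-n}(U))$, and sandwich with the twist inequality. The structural fact you flag as the linchpin---that duality is involutive and interacts with twisting so that $P^*_n$ is dualizable with dual $P_{-n}$---is precisely what the paper invokes via \cite[Proposition~3.9]{miller-picc}, so it is an established property of the Akbulut--Lickorish construction rather than a gap.
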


\begin{proof}
Let $P$ be the dualizable pattern with $P(U)\isoeq K$ and $P^*_n(U)\isoeq K'$. Then $X_0(P(U))\cong X_0(P^*(U))$, so $\nu(P(U))=\nu(P^*(U))$ by Theorem~\ref{thm:nu}. Further, $X_0(P^*_n(U))\cong X_0(P_{-n}(U))$ (cf.~\cite[Proposition~3.9]{miller-picc}), so $\nu(P^*_n(U))=\nu(P_{-n}(U))$. 

If $n \geq 0$ then using these identities together with the twist inequality we obtain
\begin{equation*}
\nu(K) = \nu(P(U))  \leq  \nu(P_{-n}(U))
= \nu(P^*_n(U)) = \nu(K') 
\leq  \nu(P^*(U))
= \nu(P(U)) = \nu(K),
\end{equation*}
and similarly if $n\leq 0$ then
\begin{equation*}
\nu(K) = \nu(P(U)) = \nu(P^*(U)) 
\leq  \nu(P^*_{n}(U)) = \nu(K') 
= \nu(P_{-n}(U)) 
\leq  \nu(P(U)) = \nu(K). \qedhere
\end{equation*}
\end{proof}

\subsection{A twist inequality for $\boldsymbol{\nu}$} \label{subsec:twist}

Let $\Sigma$ be a smooth surface with boundary embedded in a 4-manifold $W$ having $\partial W = S^3$, and such that $b_1(W) = b^+_2(W) = 0$. Then Donaldson's theorem \cite{donald:embedding} shows that there is a basis $e_1,\ldots, e_n$ for $H_2(W)$ such that with respect to the intersection pairing we have $e_i .e_j = -\delta_{ij}$. Writing $[\Sigma]=s_1\cdot e_1+s_2\cdot e_2+\cdots s_n\cdot e_n$, let $|[\Sigma]|:=\sum_{i=1}^n |s_i|$ be the $L^1$ norm of $\Sigma$, which is independent of the choice of basis. We have the following adjunction-type inequality for this situation.

\begin{thm}[Ozsv\'ath-Szab\'o {\cite[Theorem~1.1]{oz-sz:genus}}]\label{taunegdef}
Let W be a smooth, oriented 4-manifold with $b_1(W)=b^+_2(W) = 0 $ and $\partial W= S^3$. Let $\Sigma$ be a smoothly embedded surface of genus $g$ in $W$ whose connected boundary lies on $S^3$, where it is embedded as the knot $K$. Then $2\tau(K) + |[\Sigma]| + [\Sigma]. [\Sigma] \le 2g$.
\end{thm}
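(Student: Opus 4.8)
The plan is to combine Donaldson's diagonalization of the intersection form with the behavior of the Heegaard Floer cobordism maps under a two-handle attachment, detecting $\tau$ through the nonvanishing of such a map and then constraining it by an adjunction relation coming from the surface. First I would invoke Donaldson's theorem to fix a basis $e_1,\dots,e_n$ of $H_2(W)\cong H_2(W,\partial W)$ with $e_i\cdot e_j=-\delta_{ij}$, write $[\Sigma]=\sum_i s_ie_i$, and record $[\Sigma].[\Sigma]=-\sum_i s_i^2$ and $|[\Sigma]|=\sum_i|s_i|$.

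Next I would attach a two-handle to $W$ along $K\subset S^3=\partial W$ with a large framing $N$, producing a filling $W_N$ of $S^3_N(K)$ and, after deleting a ball, a cobordism $Z\colon S^3\to S^3_N(K)$ that contains the closed surface $\widehat\Sigma=\Sigma\cup D$ obtained by capping $\Sigma$ with the core $D$ of the handle; this $\widehat\Sigma$ has genus $g$ and, for $N$ large, positive self-intersection $[\Sigma].[\Sigma]+N$. Using the large-surgery identification of $\HFp(S^3_N(K))$ with the homology of the complexes $A_s$ and the definition of $\tau$, I would locate a boundary \spinc structure on $S^3_N(K)$, labelled by a value of $s$ determined by $\tau(K)$, for which the relevant cobordism map $F^+_{Z,\s}$ is nonzero.

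The crux is then to exploit the freedom in extending this boundary \spinc structure over $Z$. Since $\s$ may be modified by any $\mathrm{PD}(e_i)$ without changing its restriction to $S^3_N(K)$, and since $c_1(\s)$ is characteristic---so each $\langle c_1(\s),e_i\rangle$ is odd and hence at least $1$ in absolute value---I would choose the extension with $\langle c_1(\s),e_i\rangle=\pm1$, with signs matching those of $s_i$, so that $\langle c_1(\s),[\Sigma]\rangle=\sum_i|s_i|=|[\Sigma]|$. This is the innermost such structure and keeps $F^+_{Z,\s}$ nonzero; it is exactly here that the $L^1$-norm $|[\Sigma]|$ is forced into the statement. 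Applying the adjunction relation for a nontrivial cobordism map against the closed genus-$g$ surface $\widehat\Sigma$ (whose self-intersection is positive) then bounds $\langle c_1(\s),[\widehat\Sigma]\rangle+[\widehat\Sigma].[\widehat\Sigma]$ from above; because $\langle c_1(\s),[\widehat\Sigma]\rangle=|[\Sigma]|+2\tau(K)-N$ and $[\widehat\Sigma].[\widehat\Sigma]=[\Sigma].[\Sigma]+N$, the auxiliary framing $N$ cancels and one is left with the desired inequality $2\tau(K)+|[\Sigma]|+[\Sigma].[\Sigma]\le 2g$. The main obstacle is precisely this last matching: one must pin down the exact detecting \spinc structure (i.e.\ the correct value of $s$ in terms of $\tau$) and confirm that the adjunction relation produces the constant $2g$ rather than $2g\pm2$, which requires the careful interplay---already visible in Proposition \ref{mapchar}---between the definition of $\tau$, the large-surgery mapping cone, and the range of \spinc structures supporting a nontrivial map.
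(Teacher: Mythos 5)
There is a genuine gap, and it is fatal to the argument as written: your choice of a large \emph{positive} framing $N$. Once $\widehat\Sigma$ has positive square, the cobordism $Z\colon S^3\to S^3_N(K)$ has $b_2^+(Z)=1$ (after diagonalizing, its intersection form is $\langle -1\rangle^{\oplus k}\oplus\langle N\rangle$), and for any cobordism with $b_2^+>0$ the induced map on $HF^\infty$ vanishes. Since $\HFp(S^3)$ is precisely the image of $HF^\infty(S^3)$, this forces $F^+_{Z,\s}=0$ for \emph{every} \spinc structure $\s$ on $Z$: the detecting structure you propose to ``locate'' does not exist, and the adjunction step has nothing to act on. The same vanishing is visible in the paper's own Proposition~\ref{mapchar}: once $N>2\nu(K)$, every \spinc structure on the handle cobordism falls into case (1) (and case (3) requires $n\le 0$), so every map $\hfhat(S^3)\to\hfhat(S^3_N(K))$ vanishes, and by the composition law (applicable since the intermediate manifold is $S^3$) so does every $F_{Z,\s}$. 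Note also that the large-surgery theorem identifies the maps $v_s,h_s$ with maps induced by the cobordism going \emph{out of} the surgery toward $S^3$, not into it, so it cannot supply the nonvanishing you want. A sanity check confirms something had to be wrong: your numerology, fed into the adjunction bound $2g-2$ valid for positive-square surfaces, would output $2\tau(K)+|[\Sigma]|+[\Sigma]\cdot[\Sigma]\le 2g-2$, which is already false for the unknot bounding a nullhomologous disk in $B^4$.

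For the record, the paper does not prove this theorem (it is quoted from \cite{oz-sz:genus}), but its in-house analogues of the needed tools are Propositions~\ref{mapchar} and~\ref{adjineq}, and the workable version of your outline runs in the opposite regime: attach the handle with \emph{negative} framing, so that $Z$ stays negative definite and $\widehat\Sigma$ has negative square. Then (i) the maps into the negative surgery are nontrivial exactly in the window $-\nu(K)+n<s<\nu(K)$ of Proposition~\ref{mapchar}, which in general reaches only $s=\nu(K)-1$; (ii) the nonvanishing of the punctured-$W$ factor in the minimal characteristic \spinc structures $\langle c_1(\s),e_i\rangle=\pm1$ is a substantive theorem of Ozsv\'ath--Szab\'o (the engine of their Heegaard Floer proof of Donaldson's theorem) --- it does not follow from the observation that adding $\mathrm{PD}(e_i)$ fixes the boundary restriction, which is where your sketch silently assumes it; and (iii) one must use the negative-square adjunction inequality, i.e.\ Proposition~\ref{adjineq}, the corrected form of \cite[Lemma~3.5]{oz-sz:genus}. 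Even after these repairs the constant does not come out for free: detection at $s=\nu(K)-1$ combined with the weak bound $\le 2g$ (forced when $\widehat\Sigma\cdot\widehat\Sigma\le-2g$) yields only $2\tau(K)+|[\Sigma]|+[\Sigma]\cdot[\Sigma]\le 2g+2$, and closing this gap of $2$ --- the interplay of $\nu$ versus $\tau$ and $2g$ versus $2g-2$ that you deferred as the ``main obstacle'' --- is the actual heart of the proof; it is the issue behind the correction cited before Proposition~\ref{adjineq} and is resolved in \cite{raoux,heddenraoux} by arguments (filtered cobordism maps) that bypass the surgery formula altogether.
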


A twist inequality for $\tau$ follows quickly.

\begin{cor}\label{tautwist}
    Suppose $K$ and $J$ are knots in $S^3$ and $J$ is obtained from $K$ by adding a positive full twist about algebraically 1 strands. Then $\tau(J)\leq\tau(K)$. 
\end{cor}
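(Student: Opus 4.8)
The plan is to deduce the twist inequality directly from the adjunction-type inequality in Theorem~\ref{taunegdef}. The geometric idea is that adding a positive full twist about algebraically one strand is exactly the operation realized by attaching a single $(-1)$-framed $2$-handle, or equivalently by blowing up and absorbing the twist region. So first I would build a suitable negative-definite $4$-manifold $W$ with $\partial W = S^3$ together with an embedded surface whose boundary is $J$, by taking the standard trace cobordism picture: start with a disk bounded by $K$ pushed into $B^4$, and account for the twisting by a single blow-up. Concretely, adding one positive full twist about a disk $D$ meeting $K$ in algebraically $\pm 1$ point can be achieved by a $(-1)$-surgery (blow-up) on the boundary of $D$, turning $K$ into $J$ inside the new $S^3$ while introducing one copy of $\overline{\cc P^2}$, i.e.~a new basis element $e_{n+1}$ with $e_{n+1}\cdot e_{n+1} = -1$.

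The key steps, in order, are as follows. First I would make precise the claim that if $\Sigma_K$ is a genus-$g$ surface in $W$ with $\partial \Sigma_K = K \subset S^3 = \partial W$, then there is a surface $\Sigma_J$ of the same genus in $W \# \overline{\cc P^2}$ with $\partial \Sigma_J = J$, obtained by tubing $\Sigma_K$ to the exceptional sphere (or a multiple of it) so that the boundary acquires exactly the positive full twist. The homology class changes by $[\Sigma_J] = [\Sigma_K] + c\, e_{n+1}$ where $c = \pm 1$ records the single algebraic intersection with $D$; I would take the simplest case where $K$ bounds a disk ($g=0$) pushed into $B^4$ so that $W = B^4$ and $\Sigma_K$ is that disk, giving $W \# \overline{\cc P^2} = \overline{\cc P^2} \setminus B^4$ with the obvious $e_1$. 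Second, I would compute the two relevant quantities for $\Sigma_J$: the $L^1$ norm becomes $|[\Sigma_J]| = |[\Sigma_K]| + 1$, and the self-intersection becomes $[\Sigma_J]\cdot[\Sigma_J] = [\Sigma_K]\cdot[\Sigma_K] - 1$. Third, I would apply Theorem~\ref{taunegdef} to $\Sigma_J$ in $W \# \overline{\cc P^2}$ and separately to $\Sigma_K$ in $W$, and subtract: the two correction terms cancel, since the increase of $1$ in the norm is offset by the decrease of $1$ in the self-intersection, leaving $2\tau(J) \le 2\tau(K)$ after optimizing over surfaces.

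To make the inequality sharp I would argue contrapositively or by a direct extremal argument: take a surface $\Sigma_K$ realizing the genus bound for $K$ that nearly saturates Theorem~\ref{taunegdef}, build the corresponding $\Sigma_J$, and read off that $\tau(J)$ cannot exceed $\tau(K)$. More carefully, since Theorem~\ref{taunegdef} holds for every admissible $(W,\Sigma)$, I would choose a $W$ and $\Sigma_K$ for which the bound is efficient (for instance the trace of the unknotting-type surface), perform the blow-up modification to get $\Sigma_J$, and then the single application of the theorem to $\Sigma_J$ together with the exact bookkeeping $|[\Sigma_J]| + [\Sigma_J]\cdot[\Sigma_J] = |[\Sigma_K]| + [\Sigma_K]\cdot[\Sigma_K]$ yields $2\tau(J) \le 2g$ with the same $g$, hence $\tau(J) \le \tau(K)$.

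The main obstacle I anticipate is the careful verification of the homological bookkeeping, in particular checking that the tubing to the exceptional sphere produces a connected surface whose boundary is genuinely $J$ with the correct single positive twist, and that the intersection of $[\Sigma_J]$ with $e_{n+1}$ is exactly $\pm 1$ rather than some larger multiple coming from geometric (versus algebraic) intersections of $K$ with $D$. The hypothesis ``algebraically $1$ strands'' is essential here: geometric cancellation of oppositely-oriented intersection points with $D$ must be arranged (by isotopy within the twist region, or by allowing the tube to wrap the exceptional sphere algebraically once), so that the norm increases by precisely $1$. Ensuring this algebraic-versus-geometric distinction is handled cleanly, so that the two error terms cancel exactly, is the crux; once that is in place, the inequality falls out of a single subtraction.
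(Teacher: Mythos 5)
Your geometric mechanism---realizing the positive full twist by attaching a $(-1)$-framed $2$-handle along $\partial D$, so that a genus-$g$ surface for $K$ in $W$ yields a genus-$g$ surface for $J$ in $W \# \cpbar$ with $|[\Sigma_J]| = |[\Sigma_K]|+1$ and $[\Sigma_J]\cdot[\Sigma_J] = [\Sigma_K]\cdot[\Sigma_K]-1$---is correct, and it is the same blow-up trick the paper uses. The gap is in how you extract the inequality. Applying Theorem~\ref{taunegdef} to $\Sigma_K$ and to $\Sigma_J$ produces two inequalities pointing the \emph{same} way: $2\tau(K) \le 2g - X$ and $2\tau(J)\le 2g - X$, where $X = |[\Sigma_K]|+[\Sigma_K]\cdot[\Sigma_K]$. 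You cannot ``subtract'' these; they bound both invariants by a common quantity and give no comparison between $\tau(J)$ and $\tau(K)$. Your fallback---choosing $(W,\Sigma_K)$ that saturates the bound, so that $2g - X = 2\tau(K)$---is not available, because Theorem~\ref{taunegdef} is not sharp in general. Indeed, for any class in a negative-definite lattice each coefficient satisfies $|s_i|-s_i^2\le 0$, so $|[\Sigma]|+[\Sigma]\cdot[\Sigma]\le 0$ and the right-hand side $2g-X$ is always $\ge 0$; hence no saturating surface can exist for any knot with $\tau(K)<0$, and even when $\tau(K)\ge 0$ sharpness would force $\tau(K)$ to agree with a $4$-genus-type minimum, which fails in general. (Relatedly, your ``simplest case,'' a disk for $K$ pushed into $B^4$, exists only when $K$ is slice.)

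The missing idea is to apply your blow-up construction not to $K$ itself but to a knot for which the adjunction bound \emph{is} trivially sharp, namely a slice knot, and then use additivity of $\tau$. This is the paper's route: since $-K$ is obtained from $-J$ by adding a positive full twist, the knot $J \# -K$ is obtained from the slice knot $J \# -J$ by a positive full twist; your handle attachment then carries the slice disk of $J\#-J$ in $B^4$ to a properly embedded disk for $J\#-K$ in $B^4\#\cpbar$ with $|[D]|=1$ and $[D]\cdot[D]=-1$. A single application of Theorem~\ref{taunegdef} gives $2\tau(J\#-K)+1-1\le 0$, and additivity yields $\tau(J)-\tau(K)=\tau(J\#-K)\le 0$. (Incidentally, no tubing into the exceptional sphere is needed, and your algebraic-versus-geometric worry evaporates: the surface is left unchanged, only the boundary $S^3$ is reidentified by the blow-down, and $[\Sigma_J]\cdot e_{n+1}$ equals the linking number $\pm 1$ of the knot with $\partial D$ regardless of how many times it meets $D$ geometrically.)
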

\begin{proof}
Observe that $J\#-K$ would be slice in $B^4$ if we removed the positive full twist that distinguishes $-K$ from $-J$. We can remove the twist by blowing up $B^4$, and we see that $J\#-K$ bounds a smooth, properly embedded disk $D$ in $B^4\# \cpbar$ with $|[D]|=1$ and $[D]\cdot[D]=-1$.  Then we can apply Theorem \ref{taunegdef} and the additivity of $\tau$ to conclude $0\ge \tau(J\#-K)=\tau(J)-\tau(K)$. 
\end{proof}

From these results we deduce a (less general) analog of Theorem \ref{taunegdef} for $\nu$:

\begin{thm}\label{thm:nunegdef}
Let W be a smooth, oriented four-manifold with $b_1(W)=b^+_2(W) = 0$ and $\partial W = S^3$. Let $D$ be a smoothly embedded disk in $W$ whose boundary lies on $S^3$, where it is embedded as the knot $K$. Then  $2\nu(K) + |[D]| + [D] \cdot [D ] \le 0$
\end{thm}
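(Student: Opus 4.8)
The plan is to imitate the proof of the $\tau$-inequality in Theorem~\ref{taunegdef}, but to track the map whose surjectivity defines $\nu$ rather than the one governing $\tau$; specializing to a disk (genus $0$) will then produce the sharp constant $0$ on the right. It is worth noting first that the naive reduction is insufficient: since $\nu(K)\le\tau(K)+1$, combining this with Theorem~\ref{taunegdef} at $g=0$ only gives $2\nu(K)+|[D]|+[D]\cdot[D]\le 2$, and a knot with $\nu(K)=1$, $\tau(K)=0$, $\epsilon(K)=-1$ bounding a nullhomologous disk would satisfy that weaker bound while violating the stated one. So the missing $2$ is genuine content, and the argument must detect $\nu$ directly.

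Concretely, I would cap the disk. Fix an integer $n<0$ and attach a $2$-handle to $\partial W=S^3$ along $K$ with framing $n$, forming $V=W\cup_K h_n$ with $\partial V=S^3_n(K)$, which one checks is again negative definite (its form is $(-I)\oplus(n)$). Let $e_1,\dots,e_k$ be a Donaldson basis for $H_2(W)$ and $\sigma$ the generator of $H_2$ of the trace $X_n(K)$, so $\sigma\cdot\sigma=n$. The disk together with the handle core is an embedded sphere $\widehat D\subset V$ with $[\widehat D]=\sigma+\sum_i s_i e_i$, where the $s_i$ are the coordinates of $[D]$, and one computes $[\widehat D]\cdot[\widehat D]=n+[D]\cdot[D]$. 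Because $W$ is a negative-definite filling of $S^3$, a \spinc structure $\s$ on $V$ has nonzero filling invariant only if $\langle c_1(\s),e_i\rangle=\pm1$ for all $i$ (the minimal characteristic vectors) and the restriction of $\s$ to $X_n(K)$ induces a nonzero trace cobordism map; by Proposition~\ref{mapchar} the latter forces $|\langle c_1(\s),\sigma\rangle|\le 2\nu(K)-n$, and by Corollary~\ref{cor:smax} the top value $s_{\max}$ with $F_{s_{\max}}\neq 0$ satisfies $\nu(K)\le s_{\max}+1$. Choosing the (still admissible) signs $\langle c_1(\s),e_i\rangle=\operatorname{sign}(s_i)$ and $\langle c_1(\s),\sigma\rangle=2s_{\max}-n\ge 0$, the genus-$0$ adjunction inequality $|\langle c_1(\s),[\widehat D]\rangle|+[\widehat D]\cdot[\widehat D]\le -2$ becomes
$$\bigl(2s_{\max}-n+|[D]|\bigr)+\bigl(n+[D]\cdot[D]\bigr)\le -2,$$
so $2s_{\max}+|[D]|+[D]\cdot[D]\le -2$, and since $2\nu(K)\le 2s_{\max}+2$ this gives exactly $2\nu(K)+|[D]|+[D]\cdot[D]\le 0$.

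The main obstacle is justifying the genus-$0$ adjunction inequality used in the last step, since the sphere $\widehat D$ has \emph{negative} square $n+[D]\cdot[D]$; this is precisely the adjunction/vanishing phenomenon for Heegaard Floer cobordism maps that underlies Ozsv\'ath--Szab\'o's Theorem~\ref{taunegdef}, now applied to the filling $V$ and read off through $\nu$ instead of $\tau$. Equivalently, one re-runs the proof of Theorem~\ref{taunegdef} while tracking the surjectivity of the maps $v_s$ (which defines $\nu$) in place of the map controlling $\tau$; the cancellation above explains why specializing to a disk yields the sharp constant, with the $-2$ coming from genus $0$ absorbed exactly by the slack $\nu(K)\le s_{\max}+1$ of Corollary~\ref{cor:smax}, and with $|[D]|$ entering through the minimal characteristic vectors paired against $[D]$. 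Finally, because only the intersection form of $W$ enters this argument—via Donaldson's theorem and the grading-shift formula \eqref{spincnumbering}—no hypothesis that $W$ be a standard boundary connected sum of copies of $\cpbar$ is required.
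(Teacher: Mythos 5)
Your setup (capping $D$ with the core of an $n$-framed handle, $n<0$, and reading the result through the mapping-cone description of the trace maps) is sound, and you have correctly located the difficulty --- but the step you defer as ``the main obstacle'' is false as stated, and repairing it turns out to be equivalent to the theorem you are proving. The inequality you invoke, $|\langle c_1(\s),[\widehat D]\rangle|+[\widehat D]\cdot[\widehat D]\le -2$ for an embedded sphere of \emph{negative} square in a cobordism whose \spinc structure induces a nonzero map on $\hfhat$, does not hold in general: Proposition~\ref{adjineq} gives the $2g-2$ strengthening only when $S\cdot S>-2g$, which fails for every sphere of nonpositive square, and the end of that proof notes explicitly that the conclusion cannot be strengthened in this range. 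Concretely, take $K=U$, $W=B^4$, $D$ the trivial disk, so $V=X_n(U)$ and $[\widehat D]=\sigma$. Then $s_{\max}=0$, your chosen \spinc structure has $\langle c_1(\s),\sigma\rangle=-n=2\nu(U)-n$, and by Proposition~\ref{mapchar}(3) it induces a \emph{nonzero} map (since $\epsilon(U)=0$ and $n\le 0$); yet $|\langle c_1(\s),\sigma\rangle|+\sigma\cdot\sigma=0$, not $\le -2$. With only the valid weak bound $\le 0$, your computation yields $2\nu(K)+|[D]|+[D]\cdot[D]\le 2$, exactly the insufficient estimate you flagged at the outset. (A secondary gap: you also need the \emph{existence} direction, that a minimal \spinc structure on the punctured $W$ induces a nonzero map $\hfhat(S^3)\to\hfhat(S^3)$; this is true but requires the negative-definite cobordism theory for $HF^\infty$, which you assert rather than prove.)

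The trade-off you describe cannot be made uniform, because the two sources of slack are tight in complementary cases. When $\epsilon(K)=0$ and $n\le 0$ one has $\nu(K)=s_{\max}$, but the strong adjunction fails (the unknot example above); when $\epsilon(K)\neq 0$ one has $\nu(K)=s_{\max}+1$, and then --- since $\hfhat(S^3)=\eff$ forces the composite through $V$ to be nonzero exactly when both factors are --- the strong adjunction for your borderline \spinc structure unwinds, via $s_{\max}=\nu(K)-1$, to the assertion $2\nu(K)+|[D]|+[D]\cdot[D]\le 0$ itself. So the proposal is circular at its crux: the only case with content is $\epsilon(K)=-1$ (if $\epsilon(K)\ge 0$ then $\nu=\tau$ and Theorem~\ref{taunegdef} finishes immediately; if $\nu(K)\le 0$ the statement is vacuous since $|[D]|+[D]\cdot[D]\le 0$ always), and in that case your scheme supplies no new input. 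The paper handles $\epsilon(K)=-1$ by a genuinely different device: it uses $D$ to build a concordance from $P_n(K)$ to $P(U)$ (replacing the tube around the concordance by $S^1\times D^2\times I$ containing the Mazur pattern), caps with a slice disk for $P(U)$ to get a disk bounded by $P_n(K)$, applies the $\tau$-inequality of Theorem~\ref{taunegdef} and the twist inequality of Corollary~\ref{tautwist}, and then converts $\tau$ back to $\nu$ via Levine's formula $\tau(P(K))=\tau(K)+1=\nu(K)$ from Theorem~\ref{thm:levine}. That satellite step is the missing idea your approach would need to replace.
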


\begin{proof}
If $\epsilon(K)\geq 0$ then $\tau(K)=\nu(K)$ and the result follows immediately from Theorem \ref{taunegdef}. Thus we will assume $\epsilon(K)=-1$, which we note implies $\nu(K)=\tau(K)+1$. 

Suppose $K$ bounds a disk $D \subset W$ as in the hypothesis, and let  $r=|[D]|$ and $n=[D]\cdot[D]$. By removing (the interior of) a small tubular neighborhood of a point in the interior of $D$, we get a concordance $A$ in $W\smallsetminus B^4$ from $K$ to $U$. This concordance $n$-frames $K$ and $0$-frames $U$.  Let $P$ denote the Mazur pattern, and define $P'$ to be the annulus $P\times I\subset S^1\times D^2\times I$. Replacing the tubular neighborhood of $A$ with this $S^1\times D^2\times I$ and considering the inclusion of $P'$, we get a concordance $A'$ in $W\smallsetminus B^4$ from $P_n(K)$ to $P(U)$. Since $P(U)$ is slice, we can cap $A'$ off to a disk $D'$ in $W$ with $\partial(D')=P_n(K)$, $|[D']|=r$, and $[D']\cdot[D']=n$.

Applying Theorem \ref{taunegdef} to $D'$, we have $2\tau(P_n(K))+r+n\le 0$. The hypothesis $b^+_2(W)=0$ ensures $n \leq 0$, so Corollary \ref{tautwist} implies  $\tau(P(K))\le \tau(P_n(K))$, hence $2\tau(P(K))+r+n\le 0$. Since $\epsilon(K)=-1$, Theorem \ref{thm:levine} implies that $\tau(P(K))=\tau(K)+1$. Thus $2\tau(K)+2+|[D]|+[D]\cdot[D]\le 0$ and the result follows. 
\end{proof}

The following lemma is easily proved using the additivity of $\tau$ under connected sum, by considering the various cases (also see \cite[Proposition 3.6(6)]{hom:epsilon}). 

\begin{lem}
For all knots $K, J$ in $S^3$, $\nu(K\#J)\le \nu(K)+\nu(J)$. 
\end{lem}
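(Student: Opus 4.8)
The plan is to prove the inequality by splitting into cases according to the signs of $\epsilon(K)$ and $\epsilon(J)$, and in each case reduce the statement about $\nu$ to the corresponding statement about $\tau$, for which the clean additivity $\tau(K \# J) = \tau(K) + \tau(J)$ is available. The essential input is the dictionary recorded just after the statement of Theorem~\ref{thm:levine}, relating $\nu$, $\tau$, and $\epsilon$: namely, $\nu(K) = \tau(K)$ whenever $\epsilon(K) \in \{0, 1\}$, while $\nu(K) = \tau(K) + 1$ precisely when $\epsilon(K) = -1$. In every case I would write $\nu = \tau$ or $\nu = \tau + 1$ on each side and then compare.

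First I would dispose of the cases where at least one of the two summands has $\epsilon \ge 0$. If both $\epsilon(K) \ge 0$ and $\epsilon(J) \ge 0$, then $\nu(K) = \tau(K)$ and $\nu(J) = \tau(J)$; since $\nu(K\#J) \le \tau(K\#J) + 1$ is too weak, I would instead note that $\epsilon$ is itself additive in a suitable sense (or, more carefully, that $\nu(K\#J) \le \tau(K\#J) + [\epsilon(K\#J) = -1]$ and bound the correction term). The cleanest route is to observe that $\nu - \tau \in \{0,1\}$ always, with the value $1$ occurring exactly when $\epsilon = -1$, so it suffices to track how $\epsilon(K\#J)$ behaves. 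Using additivity of $\tau$, the inequality $\nu(K\#J) \le \nu(K) + \nu(J)$ becomes
\[
\tau(K) + \tau(J) + \delta_{K\#J} \le \tau(K) + \tau(J) + \delta_K + \delta_J,
\]
where $\delta_L := \nu(L) - \tau(L) \in \{0,1\}$, i.e.\ it reduces to showing $\delta_{K\#J} \le \delta_K + \delta_J$. Since each $\delta \in \{0,1\}$, the only way this can fail is if $\delta_{K\#J} = 1$ while $\delta_K = \delta_J = 0$, that is, $\epsilon(K\#J) = -1$ but $\epsilon(K), \epsilon(J) \in \{0,1\}$.

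Thus the whole problem collapses to ruling out that single configuration, and I expect this to be the main obstacle. The statement parenthetically cites \cite[Proposition~3.6(6)]{hom:epsilon}, which governs the additivity of $\epsilon$ under connected sum; the relevant consequence is that if $\epsilon(K)$ and $\epsilon(J)$ are both nonnegative then $\epsilon(K\#J)$ cannot equal $-1$. Concretely, Hom's result says that when $\epsilon(K) = \epsilon(J)$ then $\epsilon(K\#J)$ equals that common value, and when one of them is $0$ then $\epsilon(K\#J)$ equals the other; in either subcase, having $\epsilon(K),\epsilon(J) \in \{0,1\}$ forces $\epsilon(K\#J) \in \{0,1\}$, hence $\delta_{K\#J} = 0$, contradicting $\delta_{K\#J} = 1$. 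This closes the gap. I would therefore structure the final proof as: (i) reduce to $\delta_{K\#J} \le \delta_K + \delta_J$ using additivity of $\tau$ and the definition of $\delta$; (ii) observe this can only fail in the configuration above; and (iii) invoke the connected-sum behavior of $\epsilon$ to exclude it. The routine verification of $\tau$'s additivity and the elementary case-check on $\{0,1\}$-valued quantities are the only computations, and both are immediate.
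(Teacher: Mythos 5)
Your proposal is correct and follows essentially the same route as the paper, which proves the lemma ``using the additivity of $\tau$ under connected sum, by considering the various cases'' together with Hom's connected-sum behavior of $\epsilon$ (the same result you invoke). Your write-up simply makes explicit the case analysis the paper leaves to the reader: reduce to $\nu-\tau\in\{0,1\}$ with value $1$ exactly when $\epsilon=-1$, and then use Hom's proposition to rule out $\epsilon(K\# J)=-1$ when $\epsilon(K),\epsilon(J)\in\{0,1\}$.
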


\begin{proof}[Proof of Proposition \ref{prop:nutwist}]
Applying Theorem \ref{thm:nunegdef} exactly the same way as in Corollary \ref{tautwist}, we have that $2\nu(J\#-K)+1-1\le 0$, hence $\nu(J\#-K)\le 0$. Then subadditivity and concordance invariance of $\nu$ allow us to conclude that $\nu(J)=\nu(J\#-K\#K)\le \nu(J\#-K)+\nu(K)\le \nu(K)$.\end{proof}

\subsection{Knot trace invariance of other concordance invariants}
\label{subsec:other}

Many classical concordance invariants are determined by the 3-manifold obtained by zero-surgery on a knot, hence clearly give rise to zero-trace invariants. However, such invariants are not suited to detecting exotic zero-traces because homeomorphic traces have diffeomorphic boundary. The invariant $\nu$ is particularly well-suited to our application precisely because it is not a zero-\emph{surgery} invariant, but is a zero-\emph{trace} invariant.

It is straightforward to show that the concordance invariant $\nu^+$ defined by Hom and Wu in \cite{hom-wu} also has the property of being a zero-trace, but not zero-surgery, invariant; see Remark~\ref{rem:nu+} below. However, one should not be too distressed to find that one's favorite concordance invariant is \emph{not} a zero-trace invariant; concordance invariants which are not zero-trace invariants have the potential to yield refined slicing obstructions; see \cite{picc:conway}. Several concordance invariants are already known \emph{not} to be zero-trace invariants, including Rasmussen's $s$-invariant \cite{picc:shake} and the $d$-invariants of a knot's branched double-cover \cite{miller-picc}.  We now provide examples demonstrating that $\tau$ and $\epsilon$ also fail to be zero-trace invariants.

\bigskip

\begin{figure}[h]\center
\def\svgwidth{\linewidth}\input{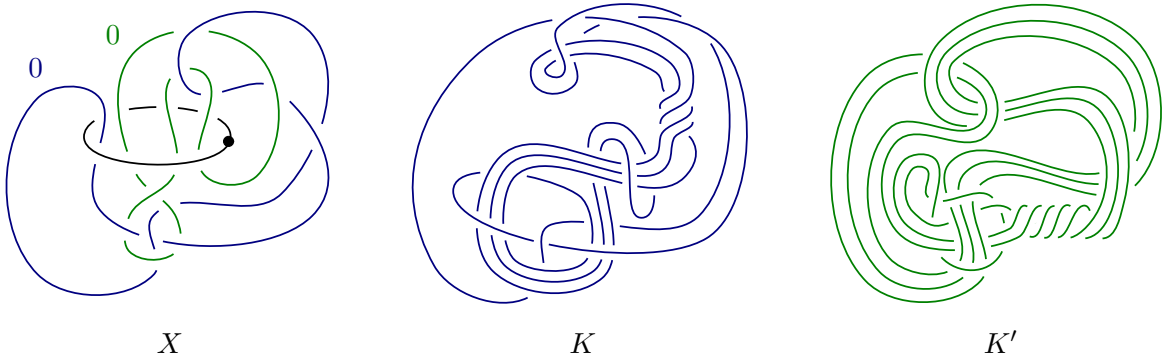} 
\caption{The zero-traces of the knots $K$ and $K'$ are both diffeomorphic to the 4-manifold given by the Kirby diagram on the left, as can be seen by canceling the 1-handle in $X$ with one or the other 2-handle.}
\label{fig:tau-knots}
\end{figure}

\begin{proof}[Proof of Theorem~\ref{thm:tau}]
Let $X$ be the 4-manifold described by the Kirby diagram in Figure~\ref{fig:tau-knots}. This diagram fits into the setup described in \cite[Theorem~2.1]{picc:shake}, hence describes a pair of diffeomorphic knot traces $X_0(K)$ and $X_0(K')$. 
Using the knot Floer homology calculator \cite{hfk-calc}, we find $\underline{\tau}(K)=1$ and $\underline{\epsilon}(K)=1$ versus  $\underline{\tau}(K')=0$ and $\underline{\epsilon}(K')=-1$; see \cite{exotic-data} for documentation. The claim now follows by appealing to the equivalence between $\underline{\tau}$ and $\tau$ and between $\underline{\epsilon}$ and $\epsilon$ \cite{oz-sz:matchingsHFK}.
\end{proof}

\begin{rem}
\textbf{(a)}  Using \cite{sknotjob}, we calculate the Rasmussen invariants of $K$ and $K'$ (with $\mathbb{F}_2$-coefficients) to be $s(K)=0$ and $s(K')=2$; see \cite{exotic-data}.  
\textbf{(b)}  By tying a local knot into one of the 2-handles in the handlebody diagram of $X$ shown above, one can easily produce additional examples. In particular, tying a right-handed trefoil into the blue 2-handle yields a pair of knots $K$ and $K'$ with $X_0(K) \cong X_0(K')$, $\underline{\tau}(K)=2$, and $\underline{\tau}(K')=1$. 
\end{rem}

\begin{rem}\label{rem:nu+} 
We have focused exclusively on the ``hat'' flavor of Heegaard Floer theory for our results, but one may wonder if additional information can be obtained by considering other flavors. For example, Hom and Wu \cite{hom-wu} studied an invariant $\nu^+$ derived from the ``plus'' version of knot Floer homology (or equivalently formulated via the ``minus'' theory), and showed that for all knots $K$, $\tau(K)\leq \nu(K)\leq \nu^+(K)\leq g_4(K)$, where $g_4(K)$ is the slice genus of $K$ in $B^4$. Moreover, $\nu^+$ is always nonnegative, and the inequality between $\nu$ and $\nu^+$ can be arbitrarily large even when $\tau(K)$ and $\nu(K)$ are positive. 

We do not undertake a detailed study of $\nu^+$ as a trace invariant or shake genus bound here. However, we can make one easy observation in the case of zero-traces, as follows. It was observed by Rasmussen \cite{rasmussenthesis} that $\nu^+$ can be characterized in terms of the zero-surgery cobordism; this is  conveniently expressed in terms of the maps $F^-_{\s}: HF^-(S^3)\to HF^-(S^3_0(K); \s)$ induced by the surgery cobordism $X_0(K)$. Here we use the same symbol for a \spinc structure on $X_0(K)$ and its restriction to $S^3_0(K)$, since the former is determined by the latter. Recall that for any \spinc 3-manifold $(Y,\s)$, the group $HF^-(Y,\s)$ admits an endomorphism $U$, and the reduced subgroup $HF^-_{red}(Y,\s)$ is defined to be the kernel of $U^N$ for any sufficiently large $N$. Then it follows from the discussion in \cite[Section 7]{rasmussenthesis} that
\[
\nu^+(K) = \min\left\{\ts\frac{1}{2}|\langle c_1(\s), \sigma\rangle| : im(F^-_{\s})\cap HF^-_{red}(S^3_0(K), \s) = 0\right\}
\]
where $\sigma$ is a generator of $H_2(X_0(K))$. (In the terminology of \cite{rasmussenthesis}, $\nu^+(K) = \min\{k : h_k = 0\}$ where $h_k$ is the ``local $h$-invariant'' denoted $V_k$ in \cite{hom-wu}; compare \cite[Definition 7.1]{rasmussenthesis}.)

It follows immediately that $\nu^+(K)$ is both a lower bound on zero-shake genus (by the adjunction inequality for maps in $HF^-$), and an invariant of the zero-trace (since the characterization above clearly depends only on the smooth structure of $X_0(K)$). Since $\nu(P(K))$ and $\nu(Q(K))$ give sharp bounds on $g_4$ for $K$ the right-handed trefoil, Proposition \ref{prop:homeo} and Hom and Wu's inequalities show that $\nu^+$ is not a zero-surgery invariant. In addition, Hom and Wu's inequalities  imply that $\tau$ and $\nu$ are lower bounds on the zero-shake genus, however, from this perspective the zero-trace invariance of $\nu$ is not clear. 
\end{rem}

\section{Shake genus bounds from knot Floer homology}\label{sec:bounds}

As mentioned in the introduction, an understanding of the genus function on surgery traces $X_n(K)$ has allowed much of the progress to date in the study of smooth structures on knot traces. Here we prove Theorem \ref{thm:nushakeboundclean}, that $\nu$ gives a lower bound on $n$-shake genus $\gsh^n(K)$ for a certain range of $n$, where we recall that 
$\gsh^n(K)$ is the minimum $g$ such that there exists a smoothly embedded surface of genus $g$ in $X_n(K)$ representing a generator of homology. Previously Celoria-Golla-Levine gave lower bounds on the zero-shake genus of $K$ coming from certain Heegaard Floer correction terms \cite{celoria-golla}. Our theorem has the advantage that it applies to a broader range of framings and that $\nu$ is generally quite computable \cite{oz-sz:matchingsHFK}. 

The essential tool here is the adjunction inequality in Heegaard Floer theory for surfaces embedded in cobordisms. When the surface has nonnegative self-intersection this result is well-established in the literature, but the case of negative square is slightly more delicate. For the sake of a self-contained exposition, we give the statement we need here; it corrects Lemma 3.5 of \cite{oz-sz:genus} and slightly extends a similar correction in \cite{raoux}.

\begin{prop}\label{adjineq} Let $W$ be a compact oriented cobordism and $S$ an embedded surface of genus $g$. If a \spinc structure $\s$ induces a nonzero map on $\hfhat$ then $|\langle c_1(\s), S\rangle | + S\cdot S \leq 2g$. If in addition we have $S\cdot S > -2g(S)$, then $|\langle c_1(s), S\rangle| + S\cdot S \leq 2g - 2$. 
\end{prop}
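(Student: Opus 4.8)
The statement is an adjunction inequality for the hat-flavored Heegaard Floer cobordism maps, split into the ``generic'' case (any genus, any square) and the sharper case when $S\cdot S > -2g(S)$. The first inequality $|\langle c_1(\s),S\rangle| + S\cdot S \le 2g$ is the classical adjunction inequality for nonzero cobordism maps, and when $S\cdot S \ge 0$ it is already established in \cite{oz-sz:genus}; the entire point here is to handle the negative-square case carefully and to squeeze out the extra $-2$ under the stated hypothesis. My strategy is the standard one: reduce to the case where $S\cdot S$ is as negative as possible by blowing down, and then import the known nonnegative-square result. Concretely, I would first dispose of degenerate cases ($g(S)=0$ with $S\cdot S \ge 0$ forces the map to vanish unless $S\cdot S = 0$, etc.) and normalize the orientation of $S$ so that $\langle c_1(\s),S\rangle \ge 0$, using conjugation invariance of the cobordism maps so that we only track $\langle c_1(\s),S\rangle + S\cdot S$.

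\textbf{The blow-up/blow-down reduction.}
The key maneuver is to trade negative self-intersection for genus and positive framing via blow-ups, so that the surface can be pushed into a form where the established inequality applies. If $S\cdot S = -k < 0$, I would tube $S$ to $k$ copies of the exceptional sphere in $W \# k\,\cpbar$ (or rather blow up at $k$ interior points of $S$ and resolve), producing a new surface $S'$ whose self-intersection is raised while its genus is controlled, and whose first Chern number pairing is adjusted in a bookkeeping-friendly way. The blow-up formula for the cobordism maps on $\hfhat$ guarantees that a nonzero map is preserved (for the appropriate $\spinc$ extension), so the hypothesis that $\s$ induces a nonzero map transfers to $S'$. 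Applying the nonnegative-square adjunction inequality to $S'$ and unwinding the effect of the blow-ups on $c_1$, $S\cdot S$, and $g$ then yields the first inequality for $S$ itself. This is precisely where Lemma~3.5 of \cite{oz-sz:genus} needs correcting: one must track the exceptional classes and their Chern pairings with care, since a naive bound loses or double-counts the contributions of the $\cpbar$ summands.

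\textbf{The sharper $-2$ and the main obstacle.}
For the second inequality, the improvement from $2g$ to $2g-2$ under $S\cdot S > -2g(S)$ comes from the fact that in this range one has genuine slack in the blow-down argument: the condition $S\cdot S > -2g(S)$ means the surface is not ``extremal,'' so after the reduction the resulting surface $S'$ still has positive genus (equivalently, we have not used up all the genus against the self-intersection), and applying the sharp form of the nonnegative-square inequality, which carries the $-2$ whenever $g(S') > 0$, gives the gain. The main obstacle, and the reason the statement deserves its own careful proof, is getting the inequalities on $g(S')$, the exceptional-class contributions to $\langle c_1,S'\rangle$, and $S'\cdot S'$ to line up so that the $-2$ survives the unwinding rather than being absorbed; I expect the delicate step to be verifying that the hypothesis $S\cdot S > -2g(S)$ is exactly the condition needed for the reduced surface to land in the $g(S')>0$ regime of the known sharp inequality, so that no boundary case slips through. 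I would finish by comparing against the correction in \cite{raoux} to confirm the hypotheses match and that the slight extension (presumably allowing the full range of framings, including the equality boundary) is correctly captured.
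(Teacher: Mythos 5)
There is a genuine gap, and it sits at the heart of your argument: the blow-up reduction you propose cannot be carried out. Blowing up (connect-summing with $\cpbar$) and tubing $S$ into exceptional spheres, or taking proper transforms, always \emph{lowers} self-intersection: each exceptional class $E_i$ satisfies $E_i\cdot E_i=-1$, so $([S]\pm[E_i])^2 = S\cdot S -1$. This is why the classical reduction goes from \emph{positive} square down to square zero; it cannot convert negative square into nonnegative square. The only way to raise the square is to tube into a $+1$-sphere in a $\mathbb{CP}^2$ summand, but that destroys the hypothesis: a direct check with the integer surgery formula (unknot, framing $+1$) shows that every \spinc structure on $\mathbb{CP}^2\setminus(B^4\sqcup B^4)$ induces the zero map on $\hfhat$, so by the composition law the map for $W\#\mathbb{CP}^2$ vanishes identically and nothing can be concluded. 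So neither direction of blow-up lets you import the nonnegative-square inequality, and your explanation of the sharper bound (that $S\cdot S>-2g(S)$ guarantees ``the reduced surface still has positive genus'') rests on this nonexistent reduction --- note that tubing into spheres never changes genus in any case. The dichotomy at $S\cdot S=-2g$ is also provably sharp (the $2g-2$ bound genuinely fails for $S\cdot S\le -2g$), which is further evidence that no bookkeeping of exceptional classes can recover it.

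The paper's proof is instead a direct computation. By the composition law and a K\"unneth principle it reduces to the case where $W$ is a punctured disk-bundle neighborhood of $S$, which is a 2-handle attachment with framing $n=S\cdot S$ along the $g$-fold connected sum of Borromean knots $K\subset \#^{2g}S^1\times S^2$. The knot Floer complex of $K$ is an exterior algebra on $H^1(S;\eff)$ with no higher differentials, so the complexes $A_s$, $B_s$ and the maps $v_s$, $h_s$ in the mapping cone $\X_n$ can be written down explicitly, and one determines exactly for which $s$ the top-degree generator of $B_s$ survives in $H_*(\X_n)$ (equivalently, for which \spinc structures the cobordism map is nonzero). The hypothesis $S\cdot S>-2g$ enters precisely as the condition $g+n\ge -g$ (with strictness needed so that $h_{g+n}(a^{bot}_{g+n})=0$) under which the borderline generator $b_g^{top}$ is a boundary, $b_g^{top}=D_n\bigl(a_g^{top}+a_{g+n}^{bot}\bigr)$, giving the improvement from $2g$ to $2g-2$; when $n\le -2g$ that generator survives and only the weaker bound holds. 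If you want to prove this statement, you will need to engage with the surgery formula (or an equivalent model of the cobordism map) rather than reduce to the nonnegative-square case.
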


\begin{proof}
By the composition law for maps on Floer homology, together with a K\"unneth principle (cf. Hedden-Raoux \cite{heddenraoux}), it suffices to consider the case that $W$ is a neighborhood of $S$ with a small ball removed. This neighborhood is a disk bundle over $S$ of degree $n = S\cdot S$, and has a handle description with $2g$ 1-handles and a single 2-handle attached along a nullhomologous knot $K\subset \#^{2g} S^1\times S^2$ with framing $n$. We can describe $K$ as the connected sum of $g$ copies of the knot $B\subset \#^2 S^1\times S^2$ given as one component of the Borromean rings after zero-framed surgery on the other two components (cf. \cite[Section 9]{oz-sz:hfk} and \cite[Section 5.2]{oz-sz:integer}).

The Floer homology of $Y = \#^{2g} S^1\times S^2$ is isomorphic to an exterior algebra on an $\eff$-vector space of dimension $2g$ that can be identified with $H^1(S;\eff)$, and the knot Floer homology of $K$ is also identified with that exterior algebra. However, the Alexander grading on $\widehat{HFK}(Y,K)$ is nontrivial:
\[
\widehat{HFK}(Y,K, j) \cong \Lambda^{g+j} H^1(S).
\]
Moreover, $\widehat{HFK}(Y,K,j)$ is supported entirely in homological degree $j$. The (homology of the) complexes $A_s$ and $B_s$ can be described in these terms as well, since there are no higher differentials in the knot Floer complex for $K$ \cite{oz-sz:hfk}. We have:
\begin{equation}\label{eq:borhfk}
B_s \cong \bigoplus_{j=-g}^g \Lambda^{g+j}_{(j)} \quad \mbox{and} \quad A_s \cong \bigoplus_{j=-g}^s \Lambda^{g+j}_{(j)} \oplus \bigoplus_{i = -1}^{-g+s} \Lambda^{g+s-i}_{(s+i)},
\end{equation}
where we have written $\Lambda^k_{(j)}$ for the space $\Lambda^k H^1(S)$ lying in homological degree $j$, and the symbols $i$ and $j$ in the decomposition above correspond to the $(i,j)$ gradings on $CFK^\infty$. Observe that $A_s \cong B_s$ when $|s|\geq g$.

Now, the 1-handles in $W$ map the generator of $\widehat{HF}(S^3)$ to the generator in top degree of $\hfhat(Y)$, or equivalently to the generator of $\Lambda^{2g}_{(g)}$ of $B_s$. Just as for knots in $S^3$, the map induced by the 2-handle in $W$ can be identified with the map induced by the inclusion of $B_s$ in the cone \eqref{eq:cone}, and therefore we are interested in when the top-degree generator survives in the homology of the mapping cone. 

Now, it can be seen that $v_s$ corresponds to the projection of $A_s$ onto the first summand in the decomposition above followed by inclusion in $B_s$ as the obvious factor, while $h_s$ is given by projection onto the sum of the term $j=s$ plus the second summand, followed by the map $\Lambda^{g+s-i} \stackrel{\sim}{\to} \Lambda^{g-s+i} \hookrightarrow B_s$. Let us write $b^{top}_s$ for the generator in highest degree of $B_s$, and $b^{bot}_s$ for the lowest-degree generator. It is then easy to see that:
\begin{itemize}
\item $b^{top}_s$ is in the image of $v_s$ if and only if $s\geq g$, and in this case $b^{top}_s = v_s(a^{top}_s)$ for a topmost generator of $A_s \cong B_s$.
\item $b^{top}_{s+n}$ is in the image of $h_{s}$ if and only if $s\leq -g$, and in this case $b^{top}_{s+n} = h_s(\tilde{a}^{top}_s)$ for $\tilde{a}^{top}_s$ a generator in top degree of $A_s$. If $s = -g$ then $\tilde{a}^{top}_s$ lies in the summand $j = -g$ of \eqref{eq:borhfk}, and if $s<-g$ then $\tilde{a}^{top}_s$ is in the summand $i = g+s$.
\item When $s\geq g$, the top generator of $A_s$ is annihilated by $h_s$ except in the case $s=g$, and in this case $h_g(a^{top}_g)$ is the bottom generator $b^{bot}_{s+n}$ of $B_{s+n}$. 
\item The lowest generator $b^{bot}_s$ of $B_s$ is in the image of $v_s$ if and only if $s\geq -g$, and in this case is the image of a generator $a_s^{bot}$ in the summand corresponding to $j = -g$ of \eqref{eq:borhfk}. Moreover $a_s^{bot}$ is in the kernel of $h_s$ unless $s = -g$.
\end{itemize}

Studying the mapping cone in a manner just as in the proof of Proposition \ref{mapchar}, we see that if $-(g-1) +n \leq s \leq g-1$ then the topmost generator of $B_s$ is not in the image of any maps in the cone and therefore its image is nonzero in $\hfhat(Y_n(K))$ under the corresponding cobordism map. If $s\geq g+1$ (resp.~$s \leq -(g+1) +n$) then the top generator is  the image under $v_s$ (resp.~$h_{s-n}$) of a generator that is killed by $h_s$ (resp.~$v_{s-n}$) and hence vanishes under the cobordism map. This proves the first claim of the proposition:  the inclusion of the top generator of $B_s$ corresponds to the map associated to a \spinc structure with $\langle c_1(\s), S\rangle + n = 2s$, where $n = S\cdot S$, hence the preceding shows the map is trivial if $|\langle c_1(\s),S\rangle| + S\cdot S \geq 2g+2$ (and nontrivial if the left side is $\leq 2g-2$).

For $s = g$, we claim that the top generator $b_g^{top}\in B_g$ is in the image of the mapping cone differential when $n > -2g$. (A similar argument shows that $b_{-g+n}^{top}$ is also in the image of $D_n$.)  To see this, observe $b_g^{top} = v_g(a_g^{top})$, and $h_g(a_g^{top}) = b_{g+n}^{bot} = v_{g+n}(a_{g+n}^{bot})$, where the last holds since $g+n \geq -g$. Furthermore, $h_{g+n}(a_{g+n}^{bot}) = 0$ when $g+n > -g$. Hence in this case $b_g^{top} = D_n(a_g^{top} + a_{g+n}^{bot})$. 

It follows that when $n > -2g$, the map in $\hfhat$ induced by $W$ vanishes in the \spinc structures corresponding to $s = g$ and $s = -g+n$. Comparing with \eqref{spincnumbering} this means we have vanishing when $|\langle c_1(\s), S\rangle | + S.S = 2g$, so we get the desired stronger conclusion in this case.

One can also check that if $n\leq -2g$ the generators $b_g^{top}$ and $b_{-g+n}^{top}$ survive in the mapping cone, so the conclusion cannot be strengthened in general. \end{proof}

\begin{prop}\label{nushakebound}
Let $K$ be a knot in $S^3$.  
\begin{enumerate}
\item If $n<2\nu(K)-1$ then $\nu(K)-1\le \gsh^n(K)$. 
\item If $-2\gsh^n(K)<n<2\nu(K)-1$ then $\nu(K)\le \gsh^n(K)$. 
\item If $n\ge 0$ and $\gsh^n(K)=0$ then $\nu(K)\le \gsh^n(K)=0$
\end{enumerate}
\end{prop}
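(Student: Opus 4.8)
The plan is to bound the genus of a surface $S$ representing a generator $\sigma\in H_2(X_n(K))$ (so that $\sigma\cdot\sigma=n$) by feeding it into the Heegaard Floer adjunction inequality of Proposition~\ref{adjineq}, using Proposition~\ref{mapchar} to locate \spinc structures on the trace cobordism whose induced maps are nonzero. Throughout I would use that the \spinc structures $\s$ on $X_n(K)$ are indexed by the integers $\langle c_1(\s),\sigma\rangle\equiv n\pmod 2$, that $|\langle c_1(\s),[S]\rangle|=|\langle c_1(\s),\sigma\rangle|$ for any such representative, and that $2\nu(K)-n\equiv n\pmod 2$.

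For (1) and (2) I would fix a genus-minimizing representative $S$, so that $g:=g(S)=\gsh^n(K)$ and $S\cdot S=n$. Under the hypothesis $n<2\nu(K)-1$ the integer $2\nu(K)-n-2$ is nonnegative; since it is $\equiv n\pmod 2$ and strictly less than $2\nu(K)-n$, Proposition~\ref{mapchar}(2) furnishes a \spinc structure $\s$ with $|\langle c_1(\s),\sigma\rangle|=2\nu(K)-n-2$ and $F_\s\neq 0$. Applying the basic adjunction inequality $|\langle c_1(\s),\sigma\rangle|+S\cdot S\le 2g$ and rearranging gives $2\nu(K)-2\le 2g$, i.e.\ $\nu(K)-1\le\gsh^n(K)$, which is (1). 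For (2), the extra hypothesis $-2\gsh^n(K)<n$ says exactly that $S\cdot S=n>-2g(S)$, so the sharper inequality in Proposition~\ref{adjineq} applies to the same $\s$ and yields $2\nu(K)-2\le 2g-2$, i.e.\ $\nu(K)\le\gsh^n(K)$.

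Part (3) is of a different flavor, and I expect the real work to lie here. For $n\ge 0$ the representative $S$ is a sphere of \emph{nonnegative} square, so the adjunction argument above only forces all of the trace's cobordism maps to vanish and, via the first part of Corollary~\ref{cor:smax}, yields the weak bound $\nu(K)\le\lceil n/2\rceil$ rather than $\nu(K)\le 0$; indeed one checks that the mirror $-X\cong X_{-n}(-K)$ supplies no better constraint, so the Floer adjunction on the trace alone cannot suffice. Instead the plan is to convert the genus-$0$ representative into a disk bounded by $K$ in a negative-definite manifold and invoke the $\nu$-adjunction inequality of Theorem~\ref{thm:nunegdef}. Concretely, I would isotope $S$ to meet the cocore of the $2$-handle transversally, so that $S\cap B^4$ is a planar surface whose boundary is a collection of $n$-framed parallel copies of $K$ (with both orientations, algebraically one), and then blow up at the $n$ points of self-intersection, using the exceptional spheres to cap off and absorb the extra, oppositely-oriented boundary components. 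The goal is a properly embedded disk $D$ with $\partial D=K$ in a $4$-manifold $W$ with $b_1(W)=b^+_2(W)=0$ and $\partial W=S^3$, whose class $[D]\in H_2(W)$ (in the $\cpbar$-summands) has all coefficients in $\{-1,0,1\}$, so that $|[D]|+[D]\cdot[D]=0$. Theorem~\ref{thm:nunegdef} would then give $2\nu(K)\le -(|[D]|+[D]\cdot[D])=0$, i.e.\ $\nu(K)\le 0=\gsh^n(K)$.

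The main obstacle is precisely this conversion in part (3): a sphere generating $H_2(X_n(K))$ need not meet the cocore geometrically once, so the capping must genuinely trade the negative cocore-intersections against exceptional spheres while keeping the resulting disk embedded. The delicate point is to control $[D]$ well enough that $|[D]|+[D]\cdot[D]=0$ \emph{exactly}, since in any negative-definite $W$ one only has $|[D]|+[D]\cdot[D]\le 0$ in general, and a strict inequality would weaken the conclusion from $\nu(K)\le 0$ to a vacuous bound. Arranging the coefficients of $[D]$ to be $\pm 1$ is therefore the crux, and is exactly what distinguishes the sharp conclusion $\nu(K)\le 0$ from the weak estimate coming from the trace adjunction.
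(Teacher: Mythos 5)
Your parts (1) and (2) are correct and are precisely the paper's argument: the \spinc structure $\s_{\nu(K)-1}$, whose Chern number against the generator is $2\nu(K)-2-n$, induces a nontrivial map by Proposition \ref{mapchar}(2), and the basic and refined adjunction inequalities of Proposition \ref{adjineq}, applied to a genus-minimizing representative, give the two bounds exactly as you say.

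Part (3) is where you have a genuine gap, as you yourself acknowledge. The target is right (an embedded disk for $K$ in a negative-definite manifold whose class has all coefficients in $\{0,\pm 1\}$, fed into Theorem \ref{thm:nunegdef}), but the construction you sketch does not get there. Concretely: $S$ is embedded, so there are no ``points of self-intersection'' to blow up; what you actually face is a planar surface $S\cap B^4$ whose boundary consists of several $n$-framed parallel copies of $K$, algebraically one; and an exceptional sphere cannot ``cap off'' a boundary component that is a copy of $K$ --- cancelling an oppositely-oriented pair of copies requires an annulus between pushoffs, and arranging all such annuli, tubes, and blow-ups to remain embedded while forcing every coefficient of the resulting class to be $\pm 1$ is exactly the crux you leave unresolved. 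The paper sidesteps this geometry entirely with a homological cut-and-paste. Delete an open tubular neighborhood of $S$: the complement $W_n(K)=X_n(K)\setminus N(S)$ is an integer homology cobordism from $S^3_n(U)$ to $S^3_n(K)$ in which the meridian of $U$ is homologous to the meridian of $K$. Separately, the unknot $U\subset\partial(B^4\#_n\cpbar)$ bounds a standard disk $D$ representing the class $(1,1,\ldots,1)$, whose complement $Z$ has boundary $S^3_n(U)$. Gluing $Z$ to $W_n(K)$ along $S^3_n(U)$ (matching meridians) and then attaching a $0$-framed $2$-handle along the meridian of $K$ in $S^3_n(K)$ yields a $4$-manifold $Z''$ with boundary $S^3$ and the homology type of $B^4\#_n\cpbar$, in which $K$ bounds an embedded disk $D'$ in the class $(1,1,\ldots,1)$ with $[D']\cdot[D']=-n$. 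The class of $D'$ is pinned down for free because the unknown sphere $S$ enters only through a homology cobordism; hence $|[D']|+[D']\cdot[D']=n-n=0$ exactly, and Theorem \ref{thm:nunegdef} gives $\nu(K)\le 0=\gsh^n(K)$. The lesson is to replace the part of the trace you cannot control by its homological shadow and glue standard models onto both ends, rather than trying to simplify the embedded sphere itself.
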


\begin{proof} First assume that  $n < 2\nu(K) -1$. Proposition \ref{mapchar} shows that \spinc structure $\s_{\nu(K)-1}$ induces a nontrivial map $F_\s$. Then Proposition \ref{adjineq} implies that $2\nu(K)-2-n+n\le 2 \gsh^n(K)$ and the first claim follows. If we also assume that $-2\gsh^n(K)<n$ then Proposition \ref{adjineq} gives the stronger implication that $2\nu(K)-n+n\le 2 \gsh^n(K)$, and we get the second claim. 

Now we will evaluate the case that $n\ge 0$ and $\gsh^n(K)=0$. Let $S$ denote the sphere in $X_n(K)$ generating second homology. Let $W_n(K)$ denote the cobordism $X_n(K)\setminus N(S)$, where $N(S)$ denotes an open tubular neighborhood of $S$. It is standard to check that $W_n(K)$ is an integer homology cobordism from $S^3_n(U)$ to $S^3_n(K)$ in which the meridian of $U$ is homologous to the meridian of $K$. 

Now consider $U$ in the boundary of $B^4\#_{n} \cpbar $; $U$ bounds a disk $D$ such that $Z:=(B^4\#_{n} \cpbar )\smallsetminus N(D)$ has boundary $S^3_{n}(U)$ and such that in the standard (2-handle) basis for $H_2(B^4\#_{n} \cpbar)$, $D$ represents the $(1,1,\ldots, 1)$ class. Form $Z':=Z\cup W$ by identifying their $S^3_{n}(U)$ boundary components via any homeomorphism taking the meridian of $U$ to the meridian of $U$. By construction, $Z'$ has the homology type of a slice disk exterior for $K$ in $B^4\#_{n} \cpbar$. Then, attaching a 0-framed 2-handle to $Z'$ along the meridian of $K$ in $S^3_{n}(K)$ yields a 4-manifold $Z''$ with $S^3$ boundary and the homology type of $B^4\#_{n} \cpbar$ in which $K$ bounds a disk $D'$ such that in the inherited basis for $H_2(Z'')$, $D'$ represents the $(1,1,\ldots, 1)$ class and such that $D\cdot D=-n$. 
Finally, we appeal to Theorem \ref{thm:nunegdef}, which implies $\nu(K)\le 0=\gsh^n(K)$. 
\end{proof}

\begin{proof}[Proof of Theorem~\ref{thm:nushakeboundclean}]
First consider the case in which $n=0$. If $\gsh^0(K)=0$ this follows from Proposition \ref{nushakebound}(3). If $\gsh^0(K)>0$ then either $\nu(K)\le 0$ in which case the result is vacuously true, or $\nu(K)\ge 1$ in which case Proposition \ref{nushakebound}(2) applies.

Now observe that if $n\le 2\nu(K)-2$ and $n\le -2 \gsh^n(K)$ then $2\nu(K)-2\le 2 \gsh^n(K)\le -n$, where the first inequality follows from  Proposition \ref{nushakebound}(1). Thus $n\le 2-2\nu (K)$. Therefore, by considering the contrapositive, we have that when $2-2\nu(K)<n\leq 2\nu(K) -2$ we must have $n>-2\gsh^n(K)$. Then Proposition~\ref{nushakebound}(2) applies. 
\end{proof}

A quick application of Theorem \ref{thm:nushakeboundclean} is a bound on the ``classical invariants'' of a Legendrian knot in the standard contact structure on $S^3$. As this is incidental to the rest of our results we do not review all the terminology here, but  bounds of this sort have played a central role in recent results on knot concordance and knot traces \cite{cochran-ray,yasui:conc} so we include it here.

Recall that for any Legendrian knot $\K$ in knot type $K$, we have the inequality 
\begin{equation}\label{taubound}
\tb(\K) + |\rot(\K)| \leq 2\tau(K) -1
\end{equation}
relating the Thurston-Bennequin number $\tb(\K)$, rotation number $\rot(\K)$, and the invariant $\tau(K)$ from knot Floer homology (see \cite{plamenevskaya:tau}). The following corollary of Theorem \ref{thm:nushakeboundclean} should be compared to the slice Bennequin inequality of \cite{akbulut-matveyev, lisca-matic} and the shake-slice Bennequin inequality of Cochran-Ray \cite{cochran-ray}.

\begin{cor}\label{cor:shsliceben} Let $\K$ be any Legendrian representative of the knot type $K$ in $S^3$. For any $n\leq 2\nu(K) -2$,
\[
\tb(\K) + |\rot(\K)| \leq 2\gsh^n(K) + 1.
\]

If $n=0$ or $2-2\nu(K)<n\leq 2\nu(K)-2$, then
\begin{equation*}
\tb(\K) + |\rot(\K)| \leq 2\gsh^n(K) -1.
\end{equation*}
\end{cor}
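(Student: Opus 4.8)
The plan is to treat both inequalities as consequences of the Legendrian bound \eqref{taubound} once it is fed into the shake-genus estimates for $\nu$ from the previous section. The starting point is \eqref{taubound}, which gives $\tb(\K) + |\rot(\K)| \leq 2\tau(K) - 1$ for every Legendrian representative $\K$ of $K$. The first thing I would record is that $\tau(K) \leq \nu(K)$ always holds: inspecting the three cases relating $\epsilon$, $\tau$, and $\nu$ collected before Theorem~\ref{thm:levine}, we have $\nu(K) = \tau(K)$ when $\epsilon(K) \in \{0,1\}$ and $\nu(K) = \tau(K) + 1$ when $\epsilon(K) = -1$, so in every case $\tau(K) \le \nu(K)$. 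Combining this with \eqref{taubound} yields the master inequality
\[
\tb(\K) + |\rot(\K)| \leq 2\nu(K) - 1,
\]
which no longer mentions $\tau$ and is the only Legendrian input needed thereafter.

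For the first inequality, I would invoke Proposition~\ref{nushakebound}(1): when $n \leq 2\nu(K) - 2$ we certainly have $n < 2\nu(K) - 1$, so the hypothesis of that proposition is met and it gives $\nu(K) - 1 \leq \gsh^n(K)$, equivalently $2\nu(K) - 1 \leq 2\gsh^n(K) + 1$. Chaining this with the master inequality produces $\tb(\K) + |\rot(\K)| \leq 2\gsh^n(K) + 1$, as desired.

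For the second (sharper) inequality, the hypotheses $n = 0$ or $2 - 2\nu(K) < n \leq 2\nu(K) - 2$ are precisely those of Theorem~\ref{thm:nushakeboundclean}, which upgrades the previous estimate to the sharp bound $\nu(K) \leq \gsh^n(K)$, i.e.\ $2\nu(K) - 1 \leq 2\gsh^n(K) - 1$. Substituting into the master inequality gives the stated conclusion. There is no substantial obstacle here beyond bookkeeping: the only subtlety is tracking the off-by-one discrepancy between Proposition~\ref{nushakebound}(1) (which loses a unit and so yields the weaker $+1$ bound valid on the larger framing range) and Theorem~\ref{thm:nushakeboundclean} (which is sharp but only on the restricted range), and checking that the range hypotheses on $n$ in each half of the corollary match exactly the hypotheses of the result being applied. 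The genuine content is entirely inherited from the shake-genus bounds of the previous section; the corollary is simply a translation of those bounds through the Legendrian inequality \eqref{taubound} and the elementary comparison $\tau \le \nu$.
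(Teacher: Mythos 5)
Your proposal is correct and follows essentially the same route as the paper: both arguments feed the Legendrian bound \eqref{taubound} together with $\tau(K)\leq\nu(K)$ into Proposition~\ref{nushakebound}(1) for the weaker inequality and into Theorem~\ref{thm:nushakeboundclean} for the sharper one. The paper states this in a single sentence; your write-up simply makes the same chaining of inequalities explicit.
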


Indeed, since $\tau(K)\leq \nu(K)$, the  statement follows from \eqref{taubound} and Proposition \ref{nushakebound} (or Theorem~\ref{thm:nushakeboundclean} in the second statement). Note that the shake slice Bennequin inequality of Cochran-Ray assumes $n \leq \tb(\K) -1$; from \eqref{taubound} this implies $n \leq 2\nu(K) -2$, so the above extends the prior shake-slice Bennequin inequality.

\section{Further examples}\label{sec:examples}
\addtocounter{subsection}{1}

This section aims to point out some interesting examples that can be obtained with our techniques; as these examples do not directly inform our main results, we omit some details.

\subsubsection{Exotic contractible Stein 4-manifolds}

First, we produce pairs of exotic contractible Stein manifolds, which were not known to exist in \cite{akbulut-ruberman}. Let $K \cup C$ be a link satisfying the hypotheses preceding Theorem~\ref{thm:reduce}, and let $k \subset S^1 \times S^2$ denote the knot induced by $K$ in $S^3_0(C)=S^1\times S^2$. The knot $k \subset S^1 \times S^2$ has a framing induced by the Seifert framing for $K \subset S^3$ and, as discussed in the beginning of \S\ref{subsec:construction}, this induces framings on the satellite knots $P(k),Q(k) \subset S^1 \times S^2$.

Viewing $S^1 \times S^2$ as the contact boundary of the Stein domain $S^1 \times B^3$, let $\overline{tb}(k) \in \zee$ denote the maximal contact framing achieved by any Legendrian representative of $k$ (as measured against the given framing on $k$). As discussed in the proof of \cite[Theorem~4.1]{yasui:conc}, if $\overline{tb}(k) \geq 1$, then $\overline{tb}(P(k)),\overline{tb}(Q(k)) \geq 1$. In this case, it follows that the Mazur manifolds $W$ and $W'$ obtained by attaching 2-handles to $S^1 \times B^3$ along the framed knots $P(k)$ and $Q(k)$ both admit Stein structures.  By taking examples with $\nu(K)=\tau(K)>0$, we obtain pairs of exotic Mazur manifolds that admit Stein structures.  

\begin{rem}
While this paper was in preparation, Akbulut and Yildiz \cite{akbulut-yildiz} produced examples of exotic contractible Stein manifolds using constructive methods from \cite{akbulut-ruberman}. Their examples, like those of \cite{akbulut-ruberman}, have more complicated handle structures. 
\end{rem}

\subsubsection{Exotic Mazur manifolds with unknotted 2-handles} Next, we outline a modification to the proof of Theorem~\ref{thm:mazur} that allows the hypothesis $\tau(K)=\nu(K)>0$ (of Section \ref{subsec:obstruction}) to be weakened. In particular, this allows us to produce exotic pairs of Mazur manifolds whose handlebody diagrams have unknotted 2-handles. Such Mazur manifolds are of special interest since their boundaries naturally admit two Mazur fillings; either zero-surgery curve can serve as the (dotted) one-handle. The operation of interchanging these two fillings, called \emph{dot-zero surgery}, is  related to cork twisting and is of central interest in the exotica literature. When one of the manifolds $W$ from this section is embedded in a 4-manifold $X$, it may be of interest to compare the manifolds obtained from $X$ by either by dot-zero surgery on $W$ or by replacing $W$ with our (absolutely) exotic filling $W'$. See Figure~\ref{fig:unknotted} for an example.

\begin{figure}\center
\def\svgwidth{\linewidth}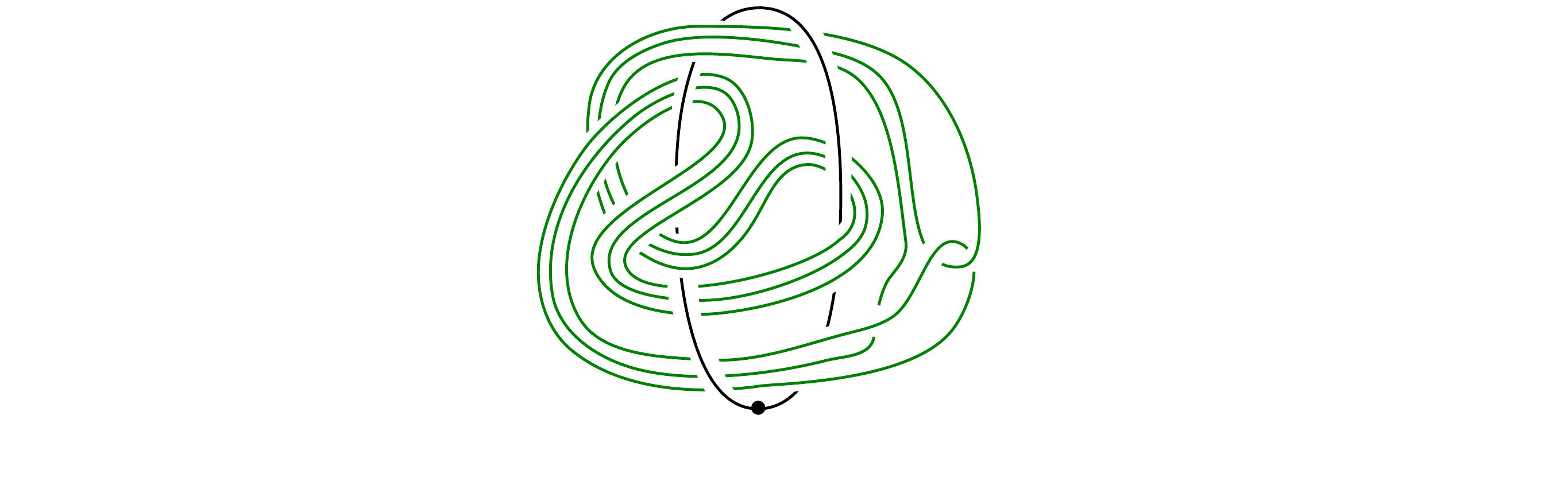 
\caption{A link $K \cup C$ that gives rise to an exotic pair of Mazur manifolds with unknotted 2-handles.}
\label{fig:unknotted}
\end{figure}

Let $L=K \cup C$ be a link satisfying the hypotheses preceding Theorem~\ref{thm:reduce}. Let us recall the notation and strategy from the proof of Theorem~\ref{thm:reduce}: Any diffeomorphism $\Phi$ between the associated Mazur manifolds $W_L$ and $W_L'$ restricts to a diffeomorphism $\phi$ between their boundaries that carries a certain 0-framed curve $\gamma \subset \partial W_L$ to a framed curve $\gamma'=\phi(\gamma) \subset \partial W_L'$. Moreover, if we attach 2-handles to $W_L$ and $W_L'$ along the framed knots $\gamma$ and $\gamma'$, respectively, we can extend the diffeomorphism between $W_L$ and $W_L'$ to a diffeomorphism between the knot traces $X_0(P(K))$ and $X_0(Q(K))$.  To obtain these knot traces, one repeatedly slides the 2-handle attached along $P(K)$ or $Q(K)$ over the 2-handle attached along $\gamma$ or $\gamma'$, respectively, until the former 2-handle no longer runs over the 1-handle. Then the 1-handle is canceled with the 2-handle attached along $\gamma$ or $\gamma'$, respectively.

Since the diffeomorphism $\phi$ carries the framed knot $\gamma$ to the framed knot $\gamma'$, we may assume that it carries a (framed) tubular neighborhood $N$ of $\gamma$ to a (framed) tubular neighborhood $N'$ of $\gamma'$. Thus, if we tie a local knot $J$ into $\gamma$ inside $N$, it has the effect of locally tying $J$ into $\gamma'$ inside $N'$. By keeping track of these tubular neighborhoods $N$ and $N'$, the proof of Theorem~\ref{thm:reduce} now shows that the 1-handles in $W_L$ and $W_L'$ can still be canceled with these (locally-knotted) 2-handles attached along $\gamma$ and $\gamma'$. We argued in the proof of Theorem \ref{thm:reduce} that sliding the 2-handles attached along $P(K)$ and $Q(K)$ over the 2-handles attached along $\gamma$ and $\gamma'$ returned knots $P(K)$ and $Q(K)$, respectively. Now that a local knot has been tied in $\gamma$ and $\gamma'$, the same argument can be modified to show slides instead yield knots $P(\widetilde{K}(J))$ and $Q(\widetilde{K}(J))$, where $\widetilde{K}$ is the pattern induced by considering $K$ in the complement of $C$.

With this modification of Theorem~\ref{thm:reduce} in hand, let $L=K \cup C$ denote the link from Figure~\ref{fig:unknotted} and $W_L$ and $W_L'$ the associated Mazur manifolds. In this case, it is easy to see that $K$ induces a pattern $\widetilde{K}$ which is concordant in $S^1\times D^2\times I$ to the core of the solid torus. Hence $\widetilde{K}(J)$ is concordant to $J$ for any knot $J$. As above, for any knot $J$ and any diffeomorphism between $W_L$ and $W_L'$, we obtain a diffeomorphism between the knot traces $X_0(P(\widetilde{K}(J)))$ and $X_0(Q(\widetilde{K}(J)))$. To be concrete, let $J$ be the the right-handed trefoil. Then 
$$\tau(\widetilde{K}(J))=\tau(J)=1=\nu(J)=\nu(\widetilde{K}(J)),$$
so Theorem~\ref{thm:PQtraces} implies that the traces $X_0(P(\widetilde{K}(J)))$ and $X_0(Q(\widetilde{K}(J)))$ cannot be diffeomorphic. It follows that $W_L$ and $W_L'$ are homeomorphic but not diffeomorphic.

 \subsubsection{Exotic Mazur manifolds with hyperbolic boundary} \label{sec:hyperbolic}

Let $W_n$ and $W_n'$ be the 4-manifolds shown in Figure~\ref{fig:hyperbolic}. Note that both are Mazur manifolds since the blue 2-handle links one of the 1-handles geometrically once, and hence the pair can be canceled. A homeomorphism between $W_n$ and $W_n'$ can be constructed in a manner analogous to the proof of Proposition~\ref{prop:homeo}, taking care to track the blue band that is linked with the green 2-handle.   
We claim that $Y_n=\partial W_n = \partial W_n'$ is a hyperbolic 3-manifold with no nontrivial self-diffeomorphisms. To see this, note that $Y_n$ can be obtained from $Y=Y_0$ by performing $-1/n$-surgery on the knot $\alpha \subset Y$ indicated by the dashed circle in Figure~\ref{fig:hyperbolic}. Using SnapPy and Sage \cite{snappy,sagemath}, we can verify that $Y \setminus \alpha$ is hyperbolic with no nontrivial self-diffeomorphisms. Therefore the same is true for $Y_n=Y_{-1/n}(\alpha)$ for $|n| \gg 0$; see, for example, \cite[Lemma~2.2]{dhl}.

\begin{figure}\center
\def\svgwidth{\linewidth}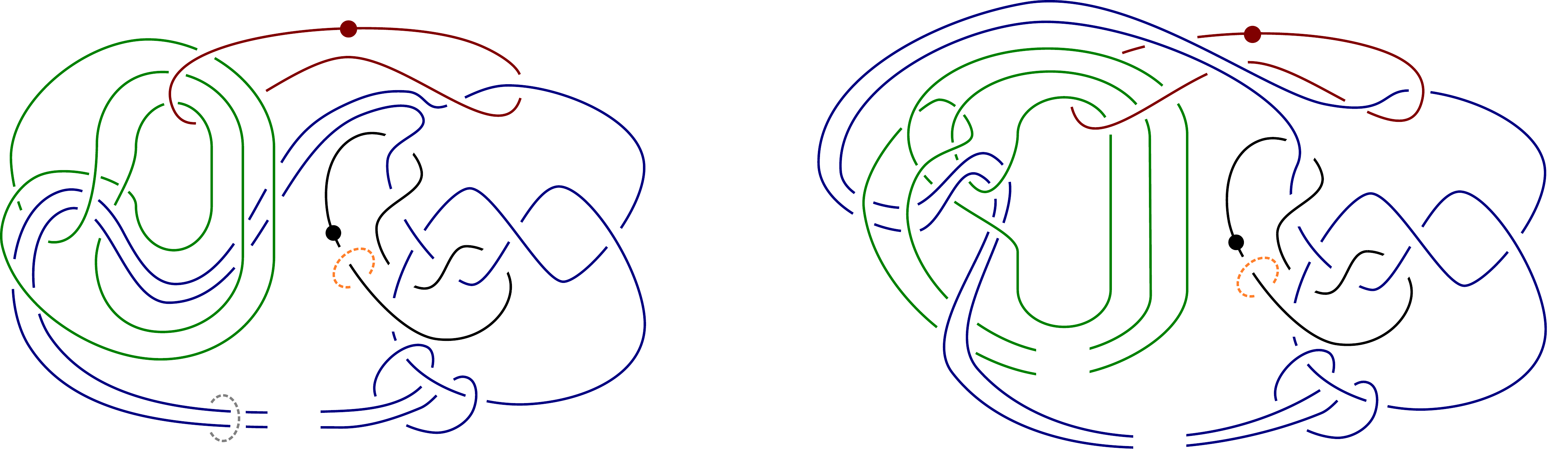 
\caption{A pair of homeomorphic Mazur manifolds with hyperbolic boundary (for $|n| \gg 0$).}
\label{fig:hyperbolic}
\end{figure}

By inspection, the given homeomorphism between $W_n$ and $W_n'$ is seen to carry $\gamma$ to $\gamma'$, preserving 0-framings. For $|n| \gg 0$, any diffeomorphism $W_n \cong W_n'$ must do the same (since there are no nontrivial self-diffeomorphisms of $Y_n$). Suppose such a diffeomorphism exists. Attaching 0-framed 2-handles along $\gamma$ and $\gamma'$ yields a diffeomorphism between the knot traces $X_0(K_n)$ and $X_0(K_n')$ described by erasing the black 1-handles from Figure~\ref{fig:hyperbolic} and canceling the blue 2-handles with the remaining 1-handles. It is straightforward to see that these knots $K_n$ and $K_n'$ are concordant to $P(T_{2,3})$ and $Q(T_{2,3})$, respectively, where $T_{2,3}$ denotes the right-handed trefoil. This implies that $\nu(K_n)=\nu(P(T_{2,3}))=2$ and $\nu(K_n')=\nu(Q(T_{2,3}))=1$, hence $X_0(K_n')$ and $X_0(K_n)$ cannot be diffeomorphic, a contradiction. It follows that $W_n$ and $W_n'$ cannot be diffeomorphic.

\begin{rem}
For $|n|\gg 0$, a similar argument shows that the knots $K_n$ and $K_n'$ described above are hyperbolic. To the authors' knowledge, these provide the first examples of hyperbolic knots with exotic zero-traces.
\end{rem}

\subsubsection{Exotic knot traces without genus bounds} \label{subsubsec:traces}
The first examples of exotic $n$-framed knot traces were given by Akbulut in \cite{akbulut:exotic}.  To the authors' knowledge, these and all other examples of exotic knot traces in the literature can be distinguished by the shake genera of the knots $K$ and $K'$ via the (shake-)slice Bennequin inequality. Similar results may be obtained using $\nu$ by applying Theorem~\ref{thm:nushakeboundclean} and Corollary~\ref{cor:shsliceben}.  
However, the invariant $\nu$  has the ability to distinguish between knot traces even when it fails to provide effective information about the shake genera of the knots. We illustrate this by constructing, for any $n>0$, a pair of exotic knot traces $X_0(K),X_0(K')$ such that $\nu(K)=2$, $\nu(K')=1$, yet $\gsh^0(K),\gsh^0(K') \geq 2n$.

Begin by setting $K_0=T_{2,5}\# 2T_{2,3} \# -T_{2,3;2,5}$, where $T_{2,3;2,5}$ denotes the $(2,5)$-cable of $T_{2,3}$, and then define $K=P(P(\#^n K_0))$ and $K'=Q(P(\#^n K_0))$. To calculate $\nu$, we work from the inside out, beginning with $K_0$:  By \cite[Corollary~3.2]{hom-wu}, we have $\tau(K_0)=0$, $\nu(K_0)=1$, and $\sigma(K_0)=-4$. It follows that $\tau( \#^n K_0)=n \cdot \tau(K_0)=0$ and $\sigma(\#^n K_0)=n\cdot \sigma(K_0)=-4n$. Since $\nu(K_0)=1$ and $\tau(K_0)=0$, we have $\epsilon(K_0) =-1$, implying $\epsilon(\#^n K_0)=-1$ (since $\epsilon$ is sign-additive under connected sum).  Since $\epsilon(\#^n K_0) = -1$, we have $\tau(P(\#^n K_0))=\tau(\#^n K_0)+1=1$ and $\epsilon(P(\#^n K_0))=1$. This gives $\nu(P(\#^n K_0))=\tau(P(\#^n K_0))=1
$.

Now since $\tau(P(\#^n K_0))>0$ and $\epsilon(P(\#^n K_0))=1$, we have
\begin{align*}
\nu(K)&= \nu(P(P(\#^n K_0)))=\nu(P(\#^n K_0))+1=2 \\
\nu(K')&= \nu(Q(P(\#^n K_0)))=\nu(P(\#^n K_0))=1.
\end{align*}
By Theorem~\ref{thm:PQtraces}, $X_0(K)$ and $X_0(K')$ are homeomorphic but not diffeomorphic.  

However, the bounds on $\gsh^0(K)$ and $\gsh^0(K')$ provided by $\nu$ (as described in Theorem~\ref{thm:nushakeboundclean}) are not sharp. To see this, we recall that the classical knot signature behaves nicely with respect to the satellite operation, see \cite{shinohara, litherland}.  In particular, for $J$ any winding number 1 satellite operator, we have
\begin{equation*}
\sigma(J(K))=\sigma(J(U))+\sigma(K)
\end{equation*}
Therefore we have
$$\sigma(K)=\sigma(K')=\sigma(\#^n K_0)=-4n.$$
Applying the bound  $\gsh^0 \geq|\sigma|/2$, a proof of which is sketched in \cite{akbulut2dclasses} (based on  \cite{tristram}), we conclude $\gsh^0(K),\gsh^0(K') \geq 2n$.

\bibliographystyle{gtart}
\bibliography{mazur-biblio}

\end{document}